\documentclass[11pt]{article}
\usepackage{amssymb,amsfonts,amsmath,latexsym,verbatim,amscd,amsthm}
\usepackage[english]{babel}
\usepackage[colorlinks,pagebackref,hypertexnames=false]{hyperref}
\usepackage[alphabetic,backrefs,initials]{amsrefs}
\usepackage{graphicx}
\renewcommand{\MR}[1]{} 

\usepackage{todonotes}




\newcommand{\sA}{\mathcal{A}\,}

\newcommand{\sC}{\mathcal{C}}
\newcommand{\sD}{\mathcal{D}}
\newcommand{\sE}{\mathcal{E}}

\newcommand{\sH}{\mathcal{H}\,}
\newcommand{\sI}{\mathcal{I}}

\newcommand{\sK}{\mathcal{K}}
\newcommand{\sL}{\mathcal{L}}

\newcommand{\sO}{\mathcal{O}}

\newcommand{\sQ}{\mathcal{Q}}
\newcommand{\sR}{\mathcal{R}}

\newcommand{\sV}{\mathcal{V}}

\newcommand{\IC}{\mathbb{C}}

\newcommand{\II}{\mathbb{I}}

\newcommand{\IS}{\mathbb{S}}

\newcommand{\IZ}{\mathbb{Z}}






\renewcommand\dim{\operatorname{dim}}

%

%

%
 
\newcommand\tensor{{\otimes}}
 
\newcommand\union{\bigcup} 


%
\newcommand{\<}{\langle}
\renewcommand{\>}{\rangle}
%

\def\wt{\widetilde}

\newcommand{\wtilde}{\widetilde}

\newcommand{\inj}{{\rm inj}}
\def\d#1{(#1)}


\newcommand{\SO}{{\rm SO}}




\numberwithin{equation}{section}

\theoremstyle{theorem}
\newtheorem{theorem}[equation]{Theorem}
\newtheorem{proposition}[equation]{Proposition}
\newtheorem{corollary}[equation]{Corollary}

\newtheorem{lemma}[equation]{Lemma}

\theoremstyle{remark}
\newtheorem{remark}[equation]{Remark}

\theoremstyle{definition}
\newtheorem{definition}[equation]{Definition}

\newcommand{\RR}{\mathbb{R}}

\newcommand{\CC}{\mathbb{C}}
\newcommand{\ZZ}{\mathbb{Z}}
\newcommand{\e}{\epsilon}
\newcommand{\del}{\partial}

\newcommand{\calC}{{\mathcal C}}
\newcommand{\calD}{{\mathcal D}}
\newcommand{\calF}{{\mathcal F}}

\newcommand{\calI}{{\mathcal I}}
\newcommand{\calL}{{\mathcal L}}

\newcommand{\calO}{{\mathcal O}}
\newcommand{\calQ}{{\mathcal Q}}
\newcommand{\calR}{{\mathcal R}}

\newcommand{\calU}{{\mathcal U}}
\newcommand{\calV}{{\mathcal V}}

\newcommand{\Ric}{\mathrm{Ric}}

\newcommand\SU{\operatorname{SU}}

\newcommand{\uone}{\underline{1}}
\newcommand{\utwo}{\underline{2}}

\title{Fredholm theory for elliptic operators on quasi-asymptotically
  conical spaces}
\author{Anda Degeratu \\ University of Freiburg  \and Rafe Mazzeo \\ Stanford University}

\date{\today}

\begin{document}
\maketitle

\begin{abstract}
We consider the mapping properties of generalized Laplace-type operators $\sL = \nabla^* \nabla + \calR$ 
on the class of quasi-asymptotically conical (QAC) spaces, which provide a Riemannian generalization of 
the QALE manifolds considered by Joyce \cite{Joyce2000}. Our main result gives conditions under which 
such operators are Fredholm when between certain weighted Sobolev or weighted H\"older spaces. 
These are generalizations of well-known theorems in the asymptotically conical (or asymptotically 
Euclidean) setting, and also sharpen and extend corresponding theorems by Joyce.  The methods here
are based on heat kernel estimates originating from old ideas of Moser and Nash, as developed further
by Grigor'yan and Saloff-Coste.  As demonstrated by Joyce's work, the QAC spaces here contain
many examples of gravitational instantons, and this work is motivated by various applications 
to manifolds with special holonomy. 
\end{abstract}

\section{Introduction}\label{sec:introduction}
Let $(Z,g)$ be a complete noncompact Riemannian manifold, and suppose that $\calL = \nabla^* \nabla + \calR$ is a 
generalized Laplace-type operator acting on sections of some bundle $E$ over $Z$.  There are now many different
settings, involving asymptotic conditions on the geometry of this space and on the potential $\calR$, which ensure 
that the action of $\sL$ between appropriate weighted Sobolev or H\"older spaces is Fredholm, or more generally,
just that its nullspace (in one of these weighted spaces) is finite dimensional. The most general 
results assume only certain bounds on the geometry, for example lower bounds on the Ricci curvature and 
the injectivity radius.  To obtain more refined results, however, one typically imposes stringent conditions 
on the asymptotic regularity of the metric, which leads to results about the asymptotics at infinity 
of solutions of $\sL u = 0$, and from there to various existence and
uniqueness results for related nonlinear problems.

To illustrate this range of hypotheses and results, consider the setting where $(Z,g)$ is  ``Euclidean at infinity''. 
One well-known, but very weak, formulation of this asymptotic condition is that balls of radius $r$ have 
volume growing no faster than a fixed constant times $r^n$, with $n = \dim Z$; we say then that $(Z,g)$ has Euclidean
volume growth. One much stronger condition which implies Euclidean volume growth is that $\Ric(g) \geq 0$. 
A famous conjecture by Yau asks whether, for manifolds with nonnegative Ricci curvature,
the space of harmonic functions on $Z$ which grow no faster than $C(1+r^d)$  is finite dimensional for
any fixed $d > 0$.  Yau's conjecture was resolved, in the beautiful work of Colding-Minicozzi \cite{Colding1997} 
and also Li \cite{Li1997}, who prove that this finite dimensionality holds whenever $(Z,g)$ has the
volume-doubling (VD) property and admits a scale-invariant Poincar\'e
inequality (PI).  It is not hard
to see that spaces with (VD) have polynomial (but not necessarily Euclidean) volume growth,
and also that both properties hold when $g$ has nonnegative Ricci curvature. A far-reaching and extensive 
investigation of the implications of these two properties was carried out in the 
work of Grigor'yan and Saloff-Coste, as well as by Sturm. One remarkable set of results here is the fact that 
the two properties (VD) and (PI) are equivalent to the existence of a parabolic Harnack inequality, and that in 
turn is equivalent to the existence of upper and lower Gaussian bounds for the heat kernel, see the books of 
Grigor'yan \cite{Grigoryan2009} and Saloff-Coste
\cite{SaloffCoste2002}, as well as the papers of
Sturm~\cite{Sturm1994, Sturm1995, Sturm1996} for more on this. 

Now suppose that we require $(Z,g)$ to be strongly asymptotic at infinity either to $\RR^n$, or to a quotient 
$\RR^n/\Gamma$, where $\Gamma$ is a finite group of rotations which acts freely on $S^{n-1}$. Such spaces are
called asymptotically Euclidean (AE) and asymptotically locally
Euclidean (ALE), respectively. They are important  in other parts of geometric analysis and mathematical physics; for example, AE spaces play a central 
role in mathematical relativity, while ALE spaces constitute some of the basic models for solitons in Ricci and other 
geometric flows, and the simplest examples of gravitational instantons have an ALE geometry.  Slightly
more generally, we could also require $(Z,g)$ to be asymptotic to a cone over a more general compact Riemannian
manifold $(Y,h)$, which means that $g$ is a `scattering metric' in the sense of Melrose \cite{Melrose1995}. All of these 
spaces have sectional curvatures decaying quadratically in the distance from a fixed compact set. 
In this setting, one can develop quite precise generalizations of the main results of Euclidean scattering theory,
i.e., derive complete asymptotic expansions at infinity for the resolvent kernel and for solutions of the
Helmholtz equation $(\Delta - \lambda^2)u = 0$. This uses the refined tools of geometric microlocal analysis. 

Our interest in this paper is with a slightly more general class of spaces, the motivation for which comes from
the important class of QALE, or quasi-asymptotically locally Euclidean, spaces in complex geometry. These arise 
as resolutions of singular quotients $\CC^n/\Gamma$, where $\Gamma$ is a finite subgroup of $\SU(n)$. The 
quotient $\CC^n/\Gamma$ is a cone over the cross-section $S^{2n-1}/\Gamma$, and if the action of $\Gamma$ 
on the sphere is free, then a QALE space is ALE. In general, this action is not free and the singularities of this 
cone extend to infinity.  A resolution $Z$ of this quotient is a smooth complex manifold equipped with a holomorphic 
map $Z \to \CC^n/\Gamma$, and $Z$ is called a crepant resolution if it satisfies some further topological
conditions which we do not explain here.  A remarkable and beautiful theorem due to Joyce, exposed at length 
in his monograph \cite{Joyce2000}, guarantees the existence of Ricci-flat QALE K\"ahler metrics on crepant resolutions
satisfying a few additional conditions. In other words, Joyce settles the analogue of the Calabi conjecture in this 
particular noncompact setting. A QALE space $(Z,g)$ has Euclidean volume growth, and has ALE asympotics 
along a dense open set of rays converging to infinity, but its full asymptotic structure is more complicated.

In this paper we introduce a Riemannian generalization of these QALE spaces which we call QAC, for quasi-asymptotically 
conic. These stand in the same relationship as general scattering metrics (which we shall call AC -- or asymptotically
conic -- spaces) do to the ALE spaces. Namely, we do not require any particular curvature properties and their 
topology at infinity is not connected to any group action.  Our reason for working in this more general setting
is to develop flexible methods for studying the analysis of elliptic operators on these spaces without appealing directly 
to the complex structure or special holonomy. The definition of QAC spaces is inductive and somewhat complicated,
and \S 2 of this paper describes many aspects of the topology and geometry of these spaces in detail. 

Our main result is a Fredholm theorem for generalized Laplace operators, acting on sections of vector bundles 
over a QAC space $(Z,g)$.   By definition, a generalized Laplacian $\calL$ is an elliptic operator of the form
$\nabla^* \nabla  + \calR$, where $\nabla$ is a connection on a Hermitian vector bundle $E$ and where
$\calR \in \operatorname{End}\,(E)$. In the simpler setting of AC
manifolds it is well known that to obtain Fredholm results, one must let $\calL$ act between Sobolev (or H\"older) spaces with a weighted 
measure.  Thus, let $\rho^{\delta+\frac{n}{2}} L^2(Z, dV_g) = \{ \rho^{\delta+\frac{n}{2}} v: v \in L^2\}$, where $\rho$ is a smooth everywhere positive 
function which is asymptotic to the radial distance on the model cone
at infinity as $\rho$ goes to infinity. 
Weighted Sobolev spaces are defined in an obvious way. A typical -- and now classical -- result is that 
\begin{equation}
\calL\colon\rho^{\delta+\frac{n}{2}} H^2(Z,dV_g) \longrightarrow \rho^{\delta+\frac{n}{2}-2}L^2(Z, dV_g)
\end{equation}
is Fredholm provided that $\delta$ does not lie in a certain discrete set of values $\{\delta_j^\pm\}$, which
$\delta_j^\pm \to \pm \infty$. These omitted values are determined by global spectral data of an induced 
operator on the asymptotic cross-section $Y$ of the AC space $Z$. There is an analogous result for $\calL$ 
acting between weighted H\"older spaces. 

Our goal is to generalize this to QAC spaces. One main issue to be faced is that we must now use not just the radial 
function $\rho$, but also a collection of other functions, $w_1, \ldots, w_k$ defined in \S 2.4 below, to define the 
weighted measures. The integer $k$ is called the depth of the space $Z$ and provides a measure of the
complexity of the inductive definition of this space (or of the stratified structure of its tangent cone at 
infinity).  We use multi-index notation, writing $w^\tau$ for $w_1^{\tau_1} \ldots w_k^{\tau_k}$. 
Deferring the (somewhat intricate) definitions of these weight functions for the moment, we now state our main
\begin{theorem}
Let $(M,g)$ be a QAC space. Let $\calL = \nabla^* \nabla + \calR$ be a generalized Laplacian, and suppose that 
$\calR \geq V\! \cdot \mathrm{Id}$, where $V$ is a scalar function which takes the form $- \Delta( \rho^a w^b)/\rho^a w^b$ 
on each end of $M$\footnote{In this work, $\Delta$ denotes the
  positive Laplacian, i.e.\ $\Delta u = - \mathrm{div}(\mathrm{grad}\,u)$.}. Then 
\begin{equation}
\calL\colon\rho^{\delta+\frac{n}{2}} w^{\tau + \frac{\nu}{2}} H^2(Z;E) \longrightarrow \rho^{\delta + \frac{n}{2} - 2} w^{\tau + \frac{\nu}{2}- \utwo}L^2(Z;E)
\label{mF-Intro}
\end{equation}
is Fredholm provided
\[
2-n-\frac{a}{2} < \delta < \frac{a}{2}, \qquad \utwo - \nu - \frac{b}{2} \leq \tau \leq \frac{b}{2}.
\]
Here $\nu$ is a $k$-tuple of constants related to the dimensions of certain components that arise in a 
topological decomposition of $Z$, see \eqref{shiftparam}, and $\utwo$ is the $k$-tuple $(0, \ldots, 2)$. 
\label{mth}
\end{theorem}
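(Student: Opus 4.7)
The plan is to reduce the Fredholm property of \eqref{mF-Intro} to a semi-Fredholm coercivity estimate plus a compactness argument, exploiting the positivity hypothesis on $\calR$ via an Agmon/Doob-type ground state transform. The assumption $\calR \geq V\cdot \mathrm{Id}$ with $V = -\Delta(\rho^a w^b)/(\rho^a w^b)$ is precisely the statement that $h := \rho^a w^b$ is a positive ground state for the companion scalar Schr\"odinger operator $\Delta + V$, so $\langle \calL u, u\rangle \geq \|\nabla u\|^2 + \int V|u|^2\, dV_g$. Writing $u = hs$ in this identity (after a formal integration by parts) converts the right-hand side into the Dirichlet-type energy $\int_Z |\nabla s|^2 h^2\, dV_g$, so lower bounds for $\langle \calL u, u\rangle$ in the weighted $L^2$ of \eqref{mF-Intro} follow from weighted Dirichlet estimates for $s$ against the density $h^{-2}\rho^{-2\delta - n} w^{-2\tau - \nu}\, dV_g$. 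I would also observe that the admissible range of $(\delta, \tau)$ is symmetric about the pair $(1 - n/2,\; (\utwo - \nu)/2)$, at which the domain and target of \eqref{mF-Intro} are dual under the $L^2(dV_g)$ pairing; this is exactly the symmetry expected from a Fredholm theorem for a formally self-adjoint operator.

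From this starting point the argument follows the scheme familiar from the AC setting. First I would establish the a priori inequality $\|\calL u\|_{\text{target}} + \|u\|_{L^2(K)} \geq c\,\|u\|_{\text{domain}}$ on a suitable compact $K\subset Z$, by combining the ground state identity with weighted Hardy/Poincar\'e inequalities on the QAC end. The latter enter through the heat kernel methods advertised in the introduction: volume doubling and a scale-invariant Poincar\'e inequality on $(Z,g)$, verified using the structural description of \S 2, feed the Grigor'yan--Saloff-Coste machinery to yield two-sided Gaussian heat kernel bounds, from which weighted Hardy estimates of the form $\int |\nabla u|^2 \rho^{-2\delta - n} w^{-2\tau - \nu}\, dV_g \geq c \int |u|^2 \rho^{-2\delta - n - 2} w^{-2\tau - \nu - \utwo}\, dV_g$ follow on the exterior of $K$ whenever the exponents respect the stated strict inequalities. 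Interior elliptic regularity plus Rellich compactness of $H^2(K;E) \hookrightarrow L^2(K;E)$ absorbs the local term, yielding finite-dimensional kernel and closed range. Running the same argument on $\calL^*$ with the dual weights handles the cokernel and upgrades this to full Fredholmness.

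The main obstacle will be proving the weighted Hardy/Poincar\'e inequalities uniformly across the stratified infinity of $Z$. In the AC case there is a single radial scale $\rho$ and these are classical; here, the QAC end decomposes into transition regions where several of the $w_i$ are simultaneously small, corresponding to nested strata of the tangent cone at infinity. Establishing coercivity there requires an induction on the depth $k$, tracking how the codimension constants $\nu_i$ from \eqref{shiftparam} shift from one stratum to the next so that the pair $(\delta + n/2,\, \tau + \nu/2)$ remains the correct self-adjoint weight at each level. The two-sided bounds $2 - n - a/2 < \delta < a/2$ and $\utwo - \nu - b/2 \leq \tau \leq b/2$ are precisely what guarantee that, at every step of this induction, the commutators produced by sliding $h = \rho^a w^b$ through the weighted norms contribute with a definite sign, so that positivity of $\calL$ is preserved down to the deepest stratum.
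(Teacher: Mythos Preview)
Your strategy differs substantially from the paper's. The paper does not argue via coercivity or semi-Fredholm a priori estimates. Instead, it constructs an explicit two-sided parametrix: the Doob transform and Grigor'yan--Saloff-Coste heat kernel bounds yield pointwise upper bounds on the scalar Green function $G_{\Delta+V}(z,z')$ (Theorem~\ref{thm:gfe}); a Schur test (Lemma~\ref{lem:forSchur}, Theorem~\ref{thm:Fredholm}) then shows the associated integral operator is bounded between the weighted spaces for the stated range of $(\delta,\tau)$. On each end this gives an actual inverse; a patching argument (Theorem~\ref{Sobthm}) produces a global parametrix with compact remainders. So the heat kernel machinery is used to estimate an integral kernel, not to derive Hardy inequalities.

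Your route has a real gap at the central step. You assert that Gaussian heat kernel bounds yield the weighted Hardy estimate $\int |\nabla u|^2 \rho^{-2\delta-n}w^{-2\tau-\nu}\,dV_g \geq c\int |u|^2 \rho^{-2\delta-n-2}w^{-2\tau-\nu-\utwo}\,dV_g$, but this is not a standard implication and you give no mechanism; the usual way to extract such coercivity from heat bounds is precisely through Green function estimates, i.e.\ the paper's method. More seriously, the ground-state identity $\langle \calL u,u\rangle_{L^2(dV_g)} \geq \int|\nabla s|^2 h^2\,dV_g$ controls only the \emph{unweighted} $L^2$ pairing of $\calL u$ against $u$, whereas the semi-Fredholm estimate you need is $\|u\|_{\rho^{\delta+n/2}w^{\tau+\nu/2}H^2} \leq C\bigl(\|\calL u\|_{\rho^{\delta+n/2-2}w^{\tau+\nu/2-\utwo}L^2} + \|u\|_{L^2(K)}\bigr)$, with domain and target carrying \emph{different} weights and no pairing against $u$ in sight. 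Bridging this requires either conjugating $\calL$ by a weight (which introduces first-order terms whose sign you have not controlled) or something equivalent to the Green function bound. Finally, the Hardy inequality you wrote down does not involve $h$ or $V$, so it is unclear where the hypothesis $\calR\geq V\cdot\mathrm{Id}$ actually enters your coercivity argument.
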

This theorem is proved in \S 7, along with the analogous Fredholm theorem for $\calL$ acting between weighted
H\"older spaces. 

We prove this following the methods of Grigor'yan and Saloff-Coste. Namely we show that $(Z,g)$ satisfies
appropriate weighted versions of the volume-doubling (VD) property and
uniform Poincar\'e inequality (PI). These 
are then used to deduce estimates for the heat kernel associated to
the scalar operator $\Delta + V$; integrating 
from $t=0$ to $t=\infty$, we obtain estimates for the Schwartz kernel $G_{\Delta+V}$ of the Green operator $(\Delta+V)^{-1}$. 
The reduction from the elliptic system on weighted spaces to the scalar operator on unweighted $L^2$ follows 
from standard domination techniques.   For the very special case where the weight parameters $a$ and $b$ 
all vanish, so $V \equiv 0$, these techniques lead to the familiar bound
\begin{equation}
G_\Delta(z, z') \asymp  d(z,z')^{2-n}, \quad n = \dim Z.
\label{modelG}
\end{equation}
With such an estimate, one can prove that for the scalar Laplacian
\[
G_\Delta\colon \rho^{\delta+\frac{n}{2}-2} w^{\tau+\frac{\nu}{2} - \utwo}
L^2(Z,dV_g) \longrightarrow \rho^{\delta+\frac{n}{2}} w^{\tau + \frac{\nu}{2}} H^2(Z,dV_g),
\]
provided $2-n < \delta <0$ and $\utwo - \nu \leq \tau \leq 0$, which is the same as \eqref{mF-Intro} when $a = b = 0$.

The main technical part of our work, then, is to prove that QAC spaces $(Z,g)$ satisfy the two properties (VD) and (PI). 
The rather elaborate argument relies heavily on the recursive definition of these spaces. The reader may be surprised 
that so much work is necessary simply to prove the most basic Fredholm properties, especially since the corresponding 
properties on AC spaces are much simpler, and may be proved in just a few pages. We can only offer one explanation: 
the analysis of elliptic operators on an AC space $Z$ depends heavily on the global properties of the induced operators
on the asymptotic cross-section $Y$, which is a compact smooth manifold. However, when $Z$ is a QAC space, this 
asymptotic cross-section $Y$ is (in a sense to be explained below) a compact stratified space, and the analysis of elliptic 
operators on such spaces is considerably more complicated than in the smooth case. 

We have already mentioned Joyce's nonlinear analysis on QALE spaces, but a crucial ingredient in \cite{Joyce2000} 
is a Fredholm result for the scalar Laplacian, which is very similar to, but less general than, our main result. 
He considers the scalar Laplacian only, but as explained above, our argument involves a reduction to a scalar
operator too. More importantly, because he relies on the maximum principle using certain barrier functions
which he constructs, he obtains the result only for a smaller set of values of the weight parameters. A key
part of our initial motivation to study this problem was to find ways to extend his results to elliptic systems, and 
to obtain the result on the optimal range of weighted spaces. The extension to systems is motivated, in turn, 
by several intended applications to index theory of generalized Dirac operators on these spaces.  Obtaining
the ``correct'' restrictions on the set of allowable weight parameters is expected to be important in other
applications to nonlinear geometric problems. 

At the end of this paper we briefly review Joyce's results in detail and explain their relationship to ours.
We also describe there a mapping property of the Laplacian in the QALE setting derived by Carron~\cite{Carron2011}
which is closely related but slightly less sharp, and finally an interesting corollary about the dimension
of the nullspace of Laplacians without restriction of weights which follows from the work of 
Colding-Minicozzi~\cite{Colding1998} and Li~\cite{Li1997}.

The most precise properties of the resolvent and heat kernel of $\calL$ should presumably be obtained using
the methods of geometric microlocal analysis, see \cite{Melrose1996}, \cite{Hassell2001} and \cite{Mazzeo1991}
for examples of this.  However, such arguments will be more intricate than the ones here, and the methods
and results in this paper suffice for many intended uses.  We intend to revisit this theory using those more
intricate techniques in a later paper. 

To conclude, we note that while the simplest gravitational instantons are ALE, these are just the first in 
a hierarchy of asymptotic geometries. Further examples include the ALF, ALG and ALH spaces, which 
can be thought of as (singular) torus bundles over lower-dimensional ALE spaces. There is a satisfactory
elliptic theory (using geometric microlocal methods) on these more complicated spaces as well.  Amongst the
many explicit spaces with special holonomy, it is now becoming clear that many of these, for example the 
monopole moduli spaces on $\RR^3$ with their Weil-Petersson type hyperK\"ahler metrics, appear to
be singular torus fibrations over QALE spaces.  The first step in analyzing elliptic operators on this important
panoply of spaces is to develop a general elliptic theory on their QALE bases; this present paper is an 
initial attempt at such an analysis. 

\medskip

\noindent{\bf Acknowledgements:}  This work was carried out over a period of years, and we gained from
the insight and advice of many people during this period. We wish to thank, in particular, Gilles Carron for
many illuminating conversations and his great encouragement, as well as Michael Eichmair and Charles Epstein 
for very helpful advice. R.M. was supported by NSF grant DMS-1105050 during the last period of this work. 

\renewcommand{\arraystretch}{1.25} 

\section{QAC geometry}\label{sec:geometries}
In this section we provide details about the class $\sQ$ of quasi-asymptotically conic spaces. As indicated in the 
introduction, the definition of these spaces is recursive and involves two further types of spaces: $\sI$, the class of 
compact stratified spaces with iterated edge metrics, and $\sD$, the `resolution blowups' of elements of $\sI$.  
A QAC space $(Z,g)$ is a  smooth Riemannian manifold which is asymptotically conic in a rather precise sense. 
There is a compact set $K_Z \subset Z$ such that the complement $Z \setminus K_Z$ is diffeomorphic to the exterior cone 
$C_{1,\infty} (Y)$ for some smooth manifold $Y$. The cross-section of $Z$ at radius $\rho$ is diffeomorphic to $Y$ 
of course, but the induced metric, which we write as $h_{1/\rho}$ (to fit with notation below), becomes singular as 
$\rho \to \infty$. Indeed, $(Y, h_{1/\rho})$ converges to the stratified space $Y_0$ with the iterated edge metric $h_0$. 
Consequently, the tangent cone to $(Z,g)$ at infinity is a cone over $(Y_0,h_0)$. These cross-sections are thus a 
family of (metric) resolutions of $Y_0$.   

The recursive nature of this definition is hidden in the fact that the resolution $Y$ of $Y_0$ is described in 
terms of a set of QAC spaces, iterated edge spaces and their resolutions, each of lower complexity than those
appearing in the resolution $Y$. We measure the complexity of this construction by the length of the recursive
definition, which is called the ``depth''.  The QAC spaces of depth $0$
are the asymptotically conic spaces, in the 
usual sense. 

This entire construction is modelled on and inspired by a well-known resolution procedure in complex geometry, leading
to the class of QALE (for quasi-asymptotically locally Euclidean) spaces; these were brought into the geometric analysis 
community through the work of Joyce \cite{Joyce2000}. What we do here is to provide a less rigid formulation 
of that complex resolution process, adapted to the category of real stratified spaces and Riemannian geometry. 

This section is long and somewhat technical. We begin by presenting a simple example from complex geometry
which illustrates the main ideas. This is followed by a review of the definition of stratified spaces with iterated edge
metrics, and their total blowups. At that point we embark on the somewhat intricate description of how to build
resolution blowups and QAC spaces of increasing depth from simpler spaces of these types. The section concludes
with a decomposition scheme for these spaces used extensively later in this paper. 

\subsection{A motivating example: the algebraic geometric resolution  of singularities of $\IC^3/\IZ_4$ as a QAC space}
We begin with a simple example, intended to provide both motivation and intuition for the more general definitions below. 
This is the algebraic geometric resolution of singularities of $\IC^3/\IZ_4$, viewed here as a QAC space of depth $1$ 
with tangent cone at infinity the cone over $\IS^5/\IZ_4$. We start with the algebraic geometric construction, and then 
reinterpret it in the language of iterated edge spaces, resolution blowups, and QAC spaces. 

\subsubsection*{The algebraic geometric resolution of singularities of $\IC^3/\IZ_4$}
As described in the introduction, if $\Gamma$ is a finite subgroup of $\mbox{SU}(n)$, then $\CC^n/\Gamma$ is a cone 
over the stratified space $Y_0=\IS^{2n-1}/\Gamma$, and there is a well-known procedure in algebraic geometry to resolve 
the singularities of this quotient.  The complexity of the resolution depends on the complexity of the partially ordered 
set of isotropy subgroups of this action, and of the linear subspaces fixed by these isotropy subgroups.  The simplest 
situation is when $\Gamma$ fixes only the origin, in which case $\CC^n/\Gamma$ is a cone over the smooth
manifold $\IS^{2n-1}/\Gamma$. We first blow up the origin in $\CC^n/\Gamma$. The resulting space might still be 
singular, but its singular points have lower complexity, and lie in a bounded region. Further blowups decrease the 
complexity of the singularities. After finitely many steps, the resulting space is smooth. This is the ALE resolution of $\CC^n/\Gamma$.

Now consider the next simplest case, where $\Gamma \cong \ZZ_4$ is the subgroup of $\SU(3)$ generated by 
$\alpha(z_1, z_2, z_3)  = (iz_1, iz_2, -z_3)$. The fixed point set and isotropy group structure is now more complicated: 
the origin $(0,0,0)$ is fixed by all of $\Gamma$, while the axis $(0,0) \times \CC$ is fixed by the subgroup $\{1, \alpha^2\} 
\cong \ZZ_2 \subset \Gamma$.  The singular locus of $\IC^3/\IZ_4$ is the image of these fixed point sets. This quotient 
has two strata: the one of highest codimension is the image the origin, and the other is the image of $(0,0) \times \CC \setminus \{(0,0,0)\}$.
Near a point $[0,0,z_3]$ in this lower codimension stratum, $\IC^3/\ZZ_4$ is modelled by a neighborhood of the point
$([0,0],0)$ in $(\CC^2/\ZZ_2) \times \CC$, where $\ZZ_2$ is the stabilizer of the point.  Note that the action of $\ZZ_2$ on
$\CC^2$ fixes only the origin.

The resolution of $\CC^3/\ZZ_4$ is accomplished in two steps. Let
\[
\pi\colon Z^{(0)} \to \IC^2/\IZ_2
\]
be an ALE resolution of $\IC^2/\IZ_2$ (as described above). Then $Z^{(0)}\times \CC$ is a resolution of $(\CC^2/\ZZ_2) \times \CC$,
and the induced action of the quotient group $\Gamma/\ZZ_2$ on this product fixes $\pi^{-1}([0,0])\times \{0\}$. The fixed point 
structure of this new action is simpler, and if we blow it up, we obtain a smooth manifold $Z$. This is a QALE resolution of $\CC^3/\ZZ_4$. 

\subsubsection*{The algebraic geometric  resolution of singularities of
  $\IC^3/\Gamma$ as a QAC space of depth $1$}
Let us now reconsider this QALE space $Z$ and provide a somewhat different point of view on how to construct it. 
Regard $\CC^3/\Gamma$ as the cone $C(\IS^{5}/\Gamma)$. Its cross-section $Y_0 = \IS^5/\Gamma$ has a simple edge singularity
along the image of the fixed point set of the action of $\Gamma$ on $\IS^5$. This fixed point set is the circle 
$\{(0,0,z_3) : |z_3| =1\} \subset \IS^5$, and it has stabilizer $\ZZ_2$. The round metric on $\IS^5$ induces
a metric $h_0$ on the quotient, and in the notation below, $(Y_0,h_0) \in \sI_1$, i.e., it is an iterated edge space of depth $1$.

In the algebraic geometric resolution of $\IC^3/\IZ_4$, we first resolve the singularities of $(Y_0,h_0)$. Note that
each fiber of the normal bundle to the singular stratum $S_1 \subset Y_0$ is identified with $\CC^2/\ZZ_2 = C (\IS^3/\ZZ_2)$;
these can be resolved by replacing some small neighborhood of the singular point in each normal fiber with a truncation of 
the resolution $\pi\colon Z^{(0)} \to \CC^2/\ZZ_2$ above. This yields the resolution blowup $Y$ of $Y_0$.  We can simultaneously
`resolve' the conic metric, and thus obtain a family of metrics $h_{1/\rho}$ with parameter $\rho$ which corresponds to 
the scale at which we are truncating $Z^{(0)}$. (This notation is meant to indicate that $(Y,h_{1/\rho}) \to (Y_0,h_0)$ as 
$\rho \to \infty$ in the Gromov-Hausdorff topology -- and in much stronger senses too.) Using these metric resolutions,
we obtain a resolution of the exterior cone $C_{1,\infty}(Y_0)$, 
\[
Z^{\text{ext}} = C_{1,\infty} (Y),
\] 
with metric 
\[
d\rho^2 + \rho^2 h_{1/\rho}.
\]
This space has boundary $Y$ at $\rho = 1$, and the full resolution $Z$ is obtained by replacing the compact
portion of the cone $C_{0,1}(Y_0)$ by a smooth compact manifold with boundary $K$ with $\del K = Y$.  In
other words, the full QAC space $Z$ is the union $Z^{\mathrm{ext}} \cup K$; the metric $g_Z$ is any smooth 
extension of the metric on $Z^{\text{ext}}$ over $K$.

\subsection{Iterated edge spaces}\label{ssec:it_cone_edge}  
The first step, as promised, is a review of iterated edge geometry. 

\subsubsection{Smoothly stratified spaces}
The basic differential topology of stratified spaces is somewhat intricate, cf.\ the foundational monograph by
Verona \cite{Verona1984} and the cogent exposition by Pflaum \cite{Pflaum2001}.  Basic definitions vary between sources, 
which motivated the effort in \cite{Albin2012} to clarify some of this material. In particular, \S 2 of that paper presents 
the structural axioms of smoothly stratified pseudomanifolds, and shows that these spaces are the same ones as the 
iterated edge spaces considered by Cheeger \cite{Cheeger1984}, and more recently by the second author \cite{Mazzeo2006}. 
That paper also describes a resolution of any such space as a manifold with corners, obtained by successively blowing 
up the strata in order of decreasing depth; the resulting space is endowed with an iterated fibration structure on its 
boundary faces. It is also shown there, conversely, that any manifold with corners with an iterated 
fibration structure can be blown down to a smoothly stratified pseudomanifold.   We recall some of this
here. All of this is taken from \S\S 2-3 of \cite{Albin2012}, to which the reader is referred for more details. 

By definition, a compact smoothly stratified pseudomanifold $Y_0$ decomposes into a disjoint union of connected 
strata, $S = \sqcup_{\alpha \in A} S_\alpha$, where each $S_\alpha$ is a (possibly open) manifold of dimension $d_\alpha$; 
one or more of these strata have maximal dimension $n$, and the union of these maximal strata are dense in $Y_0$. 
The index set $A$ is in bijective correspondence with the set of all strata; it is a partially ordered set, where 
$\alpha > \beta$ if $S_\alpha \subset \overline{S_\beta}$ (this inversion from what would seem natural is because
we wish to order the strata by depth, see below). In particular, with this ordering, the maximal dimensional strata are 
minimal elements. Various axioms describe how the strata 
fit together; for our purposes, the key one is the fact that each $S_\alpha$ has a `tubular neighborhood' $\calU_\alpha$ 
which is the total space of a smooth bundle $\pi_\alpha\colon \calU_\alpha \to S_\alpha$ with fiber a truncated cone 
$C_{0,1}(Y_0^{(\alpha)})$; the cross-section (or link) $Y_0^{(\alpha)}$ of these cones is itself a compact stratified space. This 
process of taking a cone, or bundle of cones, increases the `depth'  of the stratification, and induces a 
decomposition of $Y_0$ into the union of strata of a given depth. We denote by $\delta_\alpha$ the depth of $S_\alpha$. 

Note that we require the fibration of each tubular neighborhood $\calU_\alpha$ to have smooth local trivializations. 
This excludes certain stratified spaces for which these local trivialization functions are continuous but not smooth,
see \cite[\S2]{Albin2012} for an example. Moreover, it is not immediately clear what it means for a map to be smooth
between $\pi_\alpha^{-1} (\calV_\alpha)$ and $\calV_\alpha \times C_{0,1}(Y_0^{(\alpha)})$ (where $\calV_\alpha \subset S_\alpha$
is a small open set) since both domain and range spaces are singular. This notion of smoothness is also 
defined recursively: once one has defined stratified diffeomorphisms between compact smoothly stratified 
spaces of depth at most $k-1$, then the extension to the corresponding cones is provided by suspending these 
diffeomorphisms, and the passage to bundles of cones, and hence to arbitrary spaces of depth $k$, follows from this. 

So far we have suppressed the role of the metric, but we wish to regard these as Riemannian spaces, and hence 
consider suitable pairs $(Y_0, h_0)$.  The precise behavior of the metric is as follows. On each conic fibre $C_{0,1}(Y_0^{(\alpha)})$ 
in any one of the tubular neighborhoods $\calU_\alpha \supset S_\alpha$, we consider either exact conic metrics
$ds^2 + s^2 h_0^{(\alpha)}$, where $h_0^{(\alpha)}$ is an (inductively defined) iterated edge metric on $Y_0^{(\alpha)}$, or else perturbations 
of these of the form $ ds^2 + s^2 h_0^{(\alpha)} + \eta$, where $\eta$ is a smooth perturbation which decays as $s \to 0$, i.e.\ 
$|\eta|_{ds^2 + s^2 h_0^{(\alpha)}} \leq C s^\epsilon$ for some $\epsilon > 0$.  On the tubular neighborhoods $\calU_\alpha$ which are bundles of cones we consider 
metrics $ds^2 + s^2 h_0^{(\alpha)} + \pi_\alpha^* q_\alpha$, where the last term is the pullback of a metric on $S_\alpha$ to
$\calU_\alpha$, or decaying smooth perturbations of these. For the types of problems we study in this paper, it is not
necessary to specify the precise regularity or decay.   This class of metrics was already considered by Cheeger \cite{Cheeger1984},
and he called such a space $(Y_0, h_0)$ conelike. We shall call these iterated edge spaces. 

There is an alternate, and more directly inductive, approach. 
\begin{definition}\label{defn:I_k}
For each $k \geq 0$, define the class $\sI_k$ of compact iterated edge spaces of depth $k$:
\begin{itemize} 
\item An element $(Y_0, h_0) \in \calI_0$  is a compact smooth Riemannian manifold;
\item $(Y_0, h_0) \in \sI_k$ if there is a decomposition $Y_0 = Y_0' \cup Y_0''$, where $(Y_0'', h_0)$ is an
element of $\sI_{k-1}$ with a codimension one boundary along the intersection $Y_0' \cap Y_0''$,
and each component of $Y_0'$ is the total space of a smooth cone bundle over a smooth compact 
base space $S_k$ with fiber a truncated cone $C_{0,1}(Y_0^{(k-1)})$, where $(Y_0^{(k-1)}, h_0^{(k-1)}) \in \sI_{k-1}$. The common 
boundary $\del Y_0' = \del Y_0''$ is also a stratified space of depth $k-1$; it is the total space 
of a bundle over the same base $S_k$ with fibre $Y_0^{(k-1)}$. The base $S_k$ is the maximal depth stratum of $Y_0$.  
\item If $Y_0 \in \sI_k$, then its dimension is given, relative to the decomposition above, by $\dim S_k + \dim Y_0^{(k-1)} + 1$.
\end{itemize} 
\end{definition}
This definition implies that the maximal depth stratum in $Y_0 \in \sI_k$  is necessarily the union of compact smooth manifolds. 

To simplify the notation in the rest of the paper, we let $S_j$ denote the union of all the singular strata of depth
$j$, and write the link along this stratum as $(Y_0^{\d{j-1}},
h_0^{\d{j-1}}) \in \sI_{j-1}$. We thus sometimes indicate the
depth of a particular stratified space explicitly using the
superscript $(j)$; other times, when this information is not
needed, we will ignore it, as the notation tends to become too cluttered. 

\subsubsection{The total blowup}\label{ssec:tb}
We now define, for any iterated edge space $Y_0 \in \sI_k$, its total blowup $\widetilde{Y}_0$, which is a manifold
with corners up to codimension $k$. This is used in several
constructions and arguments in this section.

When $Y_0$ has only isolated conic singularities, this blowup procedure is well known. Each conic point $p$ has a 
tubular neighborhood $\calU$ which is identified with a truncated cone $C_{0,1}(F)$, where the link $F$ is a compact 
smooth manifold. The blowup is obtained by replacing $\calU$ with $[0,1]\times F$. If $s$ is polar distance in this 
cone with respect to some metric $ds^2 + s^2 h$ and $y$ is a local coordinate system on $F$, then the polar coordinate 
system $(s,y)$ lifts to a nonsingular coordinate system on this cylinder, and by fiat determines the smooth structure 
on $\widetilde{Y}_0$ near this boundary component. Carrying this out at all conic points in $Y_0$ yields the total 
blowup $\widetilde{Y_0}$ of $Y_0$. 

If the singularities of $Y_0 \in \sI_1$ are not isolated, then we say that $Y_0$ has simple edges.  In this case,
the singular stratum $S$ (for convenience we assume it is connected) has a tubular neighbourhood $\calU$ 
which is a bundle of truncated cones $C_{0,1}(F)$ over $S$. We can then perform this same blowup of the vertex
in each conic fiber, which gives a manifold with boundary $\widetilde{Y}_0$, the boundary of which is the total 
space of a fibration over $S$ with fibre the link $F$. The tubular neighbourhood $\calU$ lifts to a neighbourhood 
$\tilde{\calU}$ of $\del \widetilde{Y_0}$ which is a bundle of cylinders $[0,1]\times F$ over $S$. 

To define the total blowup of any $Y_0 \in \sI_k$, we use the doubling construction from \cite{Verona1984}, see also 
\cite[\S 2]{Albin2012}, which allows us to appeal to induction. If $Y_0 \in \sI_k$ and $k > 1$, then suppose that 
the total blowup $\widetilde{Y_0^{(j)}}$ has been defined for every $Y_0^{(j)} \in \sI_j$ whenever $j < k$.  
Assume for simplicity that $S_k$, the stratum of maximal depth $k$, is connected. The tubular neighbourhood 
$\calU_k\supset S_k$ is a bundle over $S_k$ with fiber $C_{0,1}(Y_0^{(k-1)})$, for some $Y_0^{(k-1)} \in \sI_{k-1}$,
and similarly, the outer boundary $\del \calU_k$ fibres over $S_k$ with fibre $Y^{(k-1)}_0$.
Now define
\[
2Y_0 = -(Y_0 \setminus S_k) \sqcup (Y_0 \setminus S_k) \sqcup ((-1,1) \times \del \calU_k).
\]
The first term on the right is $Y_0 \setminus S$ with the opposite orientation; each fibre $(-1,0)\times Y_0^{(k-1)}$ 
in the first term and $(0,1)\times Y_0^{(k-1)}$ in the second is attached in the obvious way using the bridge 
$(-1,1)\times Y_0^{(k-1)}$ from the third term. The space $2Y_0$ has an obvious involution $\tau$, and 
is a union of two stratified spaces with codimension one boundary $Y_0^\pm$ meeting along their common boundary.

Manifestly, $2Y_0 \in \sI_{k-1}$, and by induction its total blowup $\widetilde{2Y}_0$, which is a manifold with corners 
up to codimension $k-1$, is assumed to have already been defined. We can do this in a $\tau$-invariant way.
The total blowup $\widetilde{Y}_0$ of $Y_0$ itself is the portion of $\widetilde{2Y}_0$ lying  over the total blowup 
of $Y_0^+$. This is clearly a manifold with corners of codimension $k$.

This total blowup space has a lot of extra structure. Each boundary hypersurface $H_\alpha$ of $\widetilde{Y}_0$ 
corresponds to precisely one singular stratum $S_\alpha$ of $Y_0$; each $H_\alpha$ is the total space of a fibration
with fibre the total blowup of the link in the cone bundle decomposition of the tubular neighbourhood $\calU_\alpha$ 
around $S_\alpha$. The fibres of adjacent faces in $\widetilde{Y}_0$ fit together in a manner dictated by the 
inclusion relations of the closures of the corresponding singular strata. This ensemble of compatible fibrations on 
the boundary faces of a manifold with corners is called an iterated fibration structure. It is proved in \cite{Albin2012} that 
there is a bijective correspondence between iterated edge spaces and manifolds with corners with iterated fibration 
structures, where the association between the objects in either class is by blowup or blowdown. Note that there is a 
partial ordering of the boundary faces $H_\alpha$ of $\wt{Y}_0$, where $H_\alpha < H_\beta$ if $S_\alpha$ is contained 
in the closure of $S_\beta$. If $Y_0$ has depth $k$, then the length of the longest such chain is $k$; similarly, 
the depth $\delta_\alpha$ of $S_\alpha$ is the length of the longest chain with initial term $H_\alpha$. 

For each $\alpha \in A$, let $\tilde{s}_\alpha$ be a defining function for the boundary hypersurface $H_\alpha$ of
$\widetilde{Y_0}$. These defining functions can be chosen to be constant on the fibres of every other boundary hypersurface, 
and hence they can be pushed forward to define functions $s_\alpha$ on $Y_0$. Thus $s_\alpha$ serves as a radial function
on each conic fibre in the tubular neighbourhood $\calU_\alpha$ around $S_\alpha$. By construction, the $\tilde{s}_\alpha$ 
are independent of one another; the corresponding property of independence for the $s_\alpha$ is not so easy
to formulate directly on $Y_0$ without passing through the blowup. 

As motivated further in \S 3, we only consider metrics $h_0$ on $Y_0$ for which the functions $s_\alpha$ are approximate distance 
functions to the corresponding strata $S_\alpha$. For simplicity, we assume that these metrics are scaled so that each $s_\alpha$ 
takes values in $[0,1]$, and $S_\alpha = \{s_\alpha = 0\}$. The $|A|$-tuple $(s_\alpha)$ is the family of radial functions for $Y_0$. 
As at the end of the last subsection, we denote the union of strata of depth $j$ by $S_j$.  The components of any $S_j$ 
may have different dimensions, but since our considerations around these strata are local, we typically 
assume that each $S_j$ is connected, so that the strata and the radial functions are indexed by $j \in \{1, \ldots, k\}$. 

\subsection{The classes $\sD$ and $\sQ$}\label{ssec:D_Q}
The classes $\sD$ and $\sQ$, of resolution blowups of elements of $\calI$ and of QAC spaces, respectively, are closely 
intertwined with one another. These are again filtered by depth, $\sD = \sqcup \sD_k$, and $\sQ = \sqcup \sQ_k$,
with each subclass defined inductively: the definition of the elements in $\sD_k$ depend on $\sQ_j$, $j \leq k$, 
while the elements of $\sQ_k$ are constructed using spaces in $\sD_j$, $j < k$.  

We now explain this more carefully, starting first with the spaces in $\sQ_0$ and $\sD_1$, 
and then proceeding to the general inductive construction.

\subsubsection{The family $\sQ_0$}
An element $(Z,g_Z) \in \sQ_0$ is, by definition, a smooth asymptotically conic (AC) manifold. Thus there exists some compact 
set $K_Z \subset Z$ such that $Z \setminus K_Z$ is diffeomorphic to a product $(R_0,\infty) \times F$, where $F$ is a smooth compact
manifold. The metric $g_Z$ on $Z$ is arbitrary in $K_Z$ and in this exterior admits a decomposition $g_Z = g_0 + g'$ with $g_0 = 
d\rho^2 + \rho^2h_F$ and $|g'|_{g_0} = \calO(\rho^{-\nu})$ for some $\nu > 0$, where $h_F$ is a fixed metrix on $F$. (Note that, 
trivially, the cross-section $(F,h_F)$ lies in $\calD_0$.)  The smooth
radial function $\rho\colon Z\setminus K_Z \to (R_0,\infty)$ extends to a 
smooth positive function on all of $Z$, which we again denote by $\rho$.  It is a distance function near 
infinity.  For simplicity of notation, we suppose that $R_0 = 1$, so that $\rho \geq 1$ on $Z \setminus K_Z$ and $\del K_Z = 
\{\rho =1\}$. 

Fix a basepoint $p \in Z$, and consider the family of rescaled pointed spaces $(Z, \epsilon^2 g_Z,p)$. This converges in the 
pointed Gromov-Hausdorff metric to the metric cone $(\RR^+\times F, d\rho^2 + \rho^2 h_F, 0)$; the convergence is $\calC^\infty$
away from the vertex of this cone. 

\subsubsection{The family $\sD_1$} 
Fix an element $(Y_0,h_0) \in \sI_1$; thus $Y_0$ has either an isolated conic singularity or a 
simple edge singularity, i.e.\ a singular stratum $S$ which is a closed manifold with a 
cone-bundle neighbourhood where the link $F$ of the conic fibre is smooth. For simplicity of
exposition, assume that $Y_0$ has an edge.  Suppose also that the link
$F$ is the boundary of a smooth compact manifold $K$.
From this data one can define a smooth desingularization 
$Y$ of $Y_0$ by replacing each truncated conical fibre $C_{0,1}(F)$ in the cone bundle over $S$ with 
the smooth manifold $K$. 

We now exhibit a metric version of this desingularization which produces a family
of Riemannian spaces $(Y,h_\e)$ which converges, both in the Gromov-Hausdorff sense and 
smoothly away from a certain locus, to the original singular space $(Y_0,h_0)$.  

As in Definition~\ref{defn:I_k}, let $Y_0''$ denote the space where the truncated cones $C_{0,1}(F)$ are removed from each fibre 
over $S$; suppose that the metric induced on the boundary $F$ of each truncated cone is equal to some fixed metric $h_F$. 
Next, assume that the manifold with boundary $K$ is extended to a smooth noncompact AC space $(Z, g_Z)$ carrying a metric
$g_Z = d\rho^2 + \rho^2 h_F + \calO(\rho^{-1})$ on the complement of $K$; hence $(Z,g_Z) \in \sQ_0$.
Let $Z_{1/\e}$ denote the region $\{\rho \leq 1/\e\} \subset Z$, and consider the rescaling 
$(Z_{1/\e}, \e^2 g_Z)$. This space converges to the truncated cone $C_{0,1}(F)$ with conical metric $d\rho^2 +\rho^2 h_F$ as 
$\e \searrow 0$. Using a partition of unity, we may `cap off' each conical fibre in $Y_0$ by replacing $C_{0,1}(F)$ with
this rescaled space.  This defines a family of Riemannian metrics $h_\e$ on the smooth compact manifold $Y$ 
which has the properties stated. Thus altogether, $Y = Y' \cup Y''$ where $Y''$ is identified with the smooth
manifold with boundary $Y_0'' \subset Y_0$ and where $Y'$ is a bundle over $S$ with fibre $K = K_Z \equiv Z_{1/\e}$. 

We shall call this desingularization procedure, and its generalizations below, the {\it resolution blowup} of $Y_0$. 

\subsubsection{From $\sD_k$ to $\sQ_k$}
We can now describe one of the two steps in the general inductive procedure defining QAC spaces. Namely, 
we show how to construct a QAC space $(Z,g_Z)\in \sQ_k$ out of the following data: an element  $(Y,h_\e) \in \sD_k$
and a compact manifold $K$ with $\del K = Y$. (By assumption, then, $Y$ is nullbordant, which
limits the elements of $\sD_k$ to which this can be applied.) Also, denote by $(Y_0,h_0) \in \sI_k$ the
geometric limit of the spaces $(Y,h_\e)$ as $\e \to 0$.

The exterior region $Z^{\mathrm{ext}}$ is defined to be the product $[1,\infty) \times Y$ endowed with the metric
\[
\left. g_Z\right|_{Z^{\mathrm{ext}}} =  d\rho^2 + \rho^2 h_{1/\rho} + g',
\]
where $g' = \calO(\rho^{-1})$. The entire manifold $Z$ is obtained as the union $Z^{\mathrm{ext}} \sqcup K$ identified 
over the common boundary $Y$ of these two pieces; the metric is extended from this exterior 
region arbitrarily over $K$. Note that the interior regions is $K_Z =K$. By definition, any such space is an element of 
$\sQ_k$, and conversely, any element of $\sQ_k$ can be constructed in this way.

We check that $(Z,g_Z)$ has the geometric properties stated in the introduction. Consider the rescaled metrics 
$\e^2 g_Z$. Setting $r = \e \rho$, we see that on the region $\{r \geq \e\}$, 
\[
  \e^2 g_Z = dr^2 + r^2 h_{\e/r},
\]
and this converges smoothly on any region $\{r \geq c > 0\}$ to the conic metric $dr^2 + r^2 h_0$ on $(0,+\infty) \times Y_0$.

\subsubsection{Resolution blowups: desingularization of $Y_0 \in \sI_k$ using QAC spaces
of lower depth} \label{ssec:Dk}
The other half of the inductive step involves the desingularization of certain elements $(Y_0,h_0) \in \sI_k$ using QAC 
spaces of lower depth. The simplest case of this construction, the case $k=1$,
was described above, and that case should provide a good intuitive guide to the general case.

Fix an element $(Y_0,h_0) \in \sI_k$ such that the links of the conic fibres over each stratum are nullbordant. 
We wish to define the resolution blowup $(Y,h_{\e})$ using an appropriate collection of 
QAC spaces $(Z^{\d{j}},g^{\d{j}})\in\sQ_j$, $j<k$. As in
Definition~\ref{defn:I_k}, we have a decomposition $Y_0= Y_0'\cup
Y_0''$ with $Y_0'$ the total space of a smooth cone bundle over $S_k$
the stratum of maximal depth $k$ in $Y_0$. The fiber of this bundle is
$C_{0,1}(Y_0^{\d{k-1}})$, with $(Y_0^{\d{k-1}}, h_0^{\d{k-1}}) \in \sI_{k-1}$.
As explained in Section~\ref{ssec:tb} there is a natural doubling $2Y_0$
of $Y_0''$ across its 
codimension one boundary $W_0=\partial Y_0'' = \partial Y_0'$; this resulting doubled space $2Y_0$ is an 
element of $\sI_{k-1}$ and has an involution which fixes $W_0$ and interchanges the two copies of
$Y_0''$. Note also that $(W_0,\left.h_0 \right|_{W_0}) \in \sI_{k-1}$.

By induction, assume that we have chosen a resolution blowup $2Y$ of $2Y_0$.  This
can be done equivariantly, i.e.\ so that $2Y$ has an involution which is a lift of the involution of 
$2Y_0$. Let $Y''$ be `one half' of this space;  this is naturally identified as a resolution 
blowup of $Y_0'$ along all of its singular strata {\it except} the
codimension one boundary $W_0$. Thus 
$Y''$ is a smooth manifold with boundary $W$. This entire construction is accompanied by 
the existence of a family of metrics $h_\e$. If $\left.h_\e\right|_{W}$ denotes the restriction of this family of metrics 
to $W$, then $(W, \left.h_\e\right|_{W}) \in \sD_{k-1}$ which is a
resolution blowup of $(W_0, \left.h_0 \right|_{W_0})\in \sI_{k-1}$. 

The manifold $W$ is the total space of a bundle over $S_k$ with fibre $Y^{\d{k-1}}$ and family of metrics $h^{\d{k-1}}_\e$, 
so $(Y^{\d{k-1}}, h^{\d{k-1}}_\e)$ is a resolution blowup of
$(Y^{\d{k-1}}_0,h^{\d{k-1}}_0)$. Assuming that $Y^{\d{k-1}}$ is
nullcobordant, $Y^{\d{k-1}} = \partial K_{Z^{\d{k-1}}}$, where
$K_{Z^{\d{k-1}}}$ is the compact truncation of an element
$(Z^{\d{k-1}}, g^{\d{k-1}})\in \sQ_{k-1}$, and also that the
induced metric on $\partial K_{Z^{\d{k-1}}}$ is compatible with
$h^{\d{k-1}}$, we can attach to each $Y^{\d{k-1}}$ a copy of
$K_{Z^{\d{k-1}}}$ to obtain a smooth manifold $Y'$. This is the total
space of a bundle over $S_k$ with fiber $K_{Z^{\d{k-1}}}$, and
$\partial Y' = \partial Y'' = W$. Gluing $Y'$ and $Y''$ along their
common boundary $W$, we obtain a smooth compact manifold $Y$.

To complete the description of $Y$ as a resolution blowup of $Y_0$ we must define a degenerating family
of metrics on it. From the above, it is clear how the family of  metrics is
defined on $Y''$; they are the metrics $\left.h_{\e}\right|_{Y''}$. It
remains to define them on $Y'$. To this end, first choose the QAC
metric $g^{\d{k-1}}$ on $Z^{\d{k-1}}$ equal to $d\rho^2 + \rho^2 h^{\d{k-1}}_{1/\rho}$ on the 
exterior region $(Z^{\d{k-1}})^{\mathrm{ext}} = [1,\infty) \times Y^{\d{k-1}}$ and extended in an arbitrary 
manner over $K_{Z^{\d{k-1}}}$. Then take the truncated region $Z_{1/\e} = Z^{\d{k-1}} \cap \{\rho \leq 1/\e\}$
with metric $\e^2 g_Z$ and attach it to each of the fibres in the
boundary $W = \del Y'$, to obtain the metric $h_{\e}$ on $Y'$. 

It is elementary to verify that if $(Y, h_\e) \in \sD_k$, then its diameter is uniformly bounded for $0 < \e \leq 1$. 
This is needed later in the proof of Proposition~\ref{corintballest}.

This completes the construction of the resolution blowup $(Y,h_\e)$ of the iterated edge space
$(Y_0,h_0) \in \sI_k$, and hence the entire inductive definition of the two classes of spaces, $\sD_k$ and $\sQ_k$. 

\subsubsection{The product of two AC manifolds is QAC of depth $1$}\label{product}
Although it is not strictly needed, we now show that if $Z_1, Z_2 \in \sQ_0$, then the
product $Z_1 \times Z_2$ lies in $\sQ_1$. This example provides good intuition for the
overall construction of QAC spaces, and it is not hard to extend the discussion below
to show that the product of $Z_1 \in \sQ_{k_1}$ and $Z_2 \in \sQ_{k_2}$ lies in $\sQ_{k_1+k_2+1}$. 

To begin, consider two compact Riemannian manifolds $(F_1,\kappa_1)$ and $(F_2, \kappa_2)$
and the cones over them, $(C(F_j), d\rho_j^2 + \rho_j^2 \kappa_j)$. Define the spherical suspension 
$Y_0$ over $F_1\times F_2$ as the product $[0, \pi/2] \times F_1 \times F_2]$ with $F_1$ collapsed
to a point at the right end of this interval and $F_2$ collapsed to a point at the left. This is accomplished
by endowing this product with the metric 
\[
h_0 = d\theta^2 + \cos^2\theta\ \kappa_1 + \sin^2 \theta\ \kappa_2.
\]
We claim that $(Y_0,h_0) \in \sI_1$.   Indeed, near $\theta = 0$, 
\[
h_0 = d\theta^2 + \theta^2\kappa_2 + \kappa_1 + \eta,
\]
where $\eta = (\sin^2 \theta - \theta^2) \kappa_2 + (\cos^2 \theta - 1)\kappa_1 = \sO(\theta^2)$. 
The geometry near $\theta = \pi/2$ is identical. 

Now observe that if we set $\rho^2 = \rho_1^2 + \rho_2^2$, then
\[
d\rho_1^2 + \rho_1^2 \kappa_1 + d\rho_2^2 + \rho_2^2 \kappa_2   = d\rho^2 + \rho^2 h_0,
 \]
so $C(F_1) \times C(F_2) = C(Y_0)$. 

Now suppose that $(Z_1, g_1)$ and $(Z_2, g_2)$ are AC, and modelled at infinity by $C(F_1)$ and $C(F_2)$, 
respectively. 
\begin{proposition}
$(Z_1\times Z_2, g_1 \oplus g_2) \in \sQ_1$.
\end{proposition}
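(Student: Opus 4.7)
The plan is to construct, from the AC data $(Z_j,g_j)$, a resolution blowup $(Y,h_\e)\in\sD_1$ of the spherical suspension $Y_0\in\sI_1$ defined above, together with a compact manifold $K_Z$ with $\partial K_Z=Y$, and then to identify $Z_1\times Z_2$ outside a compact set with the model exterior $[1,\infty)\times Y$ in such a way that the product metric $g_1\oplus g_2$ takes the form $d\rho^2+\rho^2 h_{1/\rho}+g'$ with $g'=\calO(\rho^{-1})$. The key identity driving everything is the one already verified immediately above the statement, namely $d\rho_1^2+\rho_1^2\kappa_1+d\rho_2^2+\rho_2^2\kappa_2 = d\rho^2+\rho^2 h_0$ with $\rho^2=\rho_1^2+\rho_2^2$, which says that the Euclidean product of the two exact cones is a cone over $Y_0$.

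First I would spell out the element of $\sD_1$. The space $Y_0$ has two singular strata: $S^0=\{\theta=0\}\cong F_1$ with link $F_2$, and $S^{\pi/2}=\{\theta=\pi/2\}\cong F_2$ with link $F_1$. Since $F_j=\partial K_{Z_j}$, the recipe of \S\ref{ssec:Dk} applies directly: insert a copy of $K_{Z_2}$ in place of each conic fibre $C_{0,1}(F_2)$ over $S^0$, and a copy of $K_{Z_1}$ in place of each $C_{0,1}(F_1)$ over $S^{\pi/2}$. This yields a smooth compact manifold $Y$ together with a one-parameter family of metrics $h_\e$ built by attaching the rescaled truncations $(Z_{j,1/\e},\e^2 g_j)$ in place of the conic fibres. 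Take $K_Z$ to be a smoothing of the corner product $K_{Z_1}\times K_{Z_2}$, so that $\partial K_Z\cong (F_1\times K_{Z_2})\cup_{F_1\times F_2}(K_{Z_1}\times F_2)$ is diffeomorphic to $Y$.

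Next I would construct the identification at infinity. Fix $R_0\gg 1$ and set $Z^{\mathrm{ext}}:=\{\rho_1^2+\rho_2^2\geq R_0^2\}\subset Z_1\times Z_2$. Define $\rho:=\sqrt{\rho_1^2+\rho_2^2}$ and, on the region where both $\rho_j\geq 1$, $\theta:=\arctan(\rho_2/\rho_1)$. A diffeomorphism $\Phi\colon Z^{\mathrm{ext}}\to[R_0,\infty)\times Y$ is defined piecewise: on the bulk $\{\rho_1,\rho_2\geq 1\}$ send $(\rho_1,f_1,\rho_2,f_2)\mapsto(\rho,(\theta,f_1,f_2))\in[R_0,\infty)\times Y_0''$; on $\{\rho_2\leq 2\}\cap Z^{\mathrm{ext}}$ send $(\rho_1,f_1,p_2)\mapsto(\rho,(f_1,\sigma_\rho(p_2)))$, where $\sigma_\rho$ places $K_{Z_2}$ into its cap inside $Y$ at scale $\e=1/\rho$; and symmetrically near $\theta=\pi/2$. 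On the bulk region the identity recalled above gives $g_1\oplus g_2=d\rho^2+\rho^2 h_0+\calO(\rho^{-\nu})$. Near $S^0$, using $\rho=\rho_1+\calO(\rho_1^{-1})$,
\[
g_1\oplus g_2 = d\rho_1^2+\rho_1^2\kappa_1+g_{Z_2}\big|_{K_{Z_2}}+g_1' = d\rho^2+\rho^2(\kappa_1+\rho^{-2}g_{Z_2})+\calO(\rho^{-1}),
\]
and by construction the parenthesized term is exactly $h_{1/\rho}$ on the $K_{Z_2}$-cap of $Y$ over $S^0$; the check near $S^{\pi/2}$ is symmetric. Hence $\Phi^*(d\rho^2+\rho^2 h_{1/\rho})$ agrees with $g_1\oplus g_2$ up to an admissible $\calO(\rho^{-1})$ error, verifying $(Z_1\times Z_2,g_1\oplus g_2)\in\sQ_1$.

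The main obstacle is making $\Phi$ genuinely smooth across the transition annuli $\{1\leq\rho_j\leq 2\}$, where both the bulk and the cap descriptions of a fibre of $Y$ are simultaneously in play. This is, however, precisely the overlap on which the family $h_\e$ is itself defined by a partition of unity in \S\ref{ssec:Dk}, so the two local descriptions are compatible by construction, and any two admissible choices of $\Phi$ yield product metrics differing only by terms of order $\rho^{-1}$ — well within the tolerance of the QAC error term $g'$.
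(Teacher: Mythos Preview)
Your argument is correct and follows the same strategy as the paper's proof: build the resolution blowup $(Y,h_\e)$ of the suspension $Y_0$ using the caps $K_{Z_j}$, then verify region by region that $g_1\oplus g_2$ agrees with $d\rho^2+\rho^2 h_{1/\rho}$ up to admissible error. The key identity $d\rho_1^2+\rho_1^2\kappa_1+d\rho_2^2+\rho_2^2\kappa_2=d\rho^2+\rho^2 h_0$ and the computation $\rho=\rho_1+\calO(\rho_1^{-1})$ near the strata are exactly what the paper uses.

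There is one imprecision worth flagging. You decompose $Z^{\mathrm{ext}}$ using the fixed bounds $\{\rho_j\le 2\}$, whereas the paper decomposes by the \emph{ratio} $\rho_1/\rho_2$ (equivalently by $\theta$-ranges $\{\theta\le c\}$, $\{c\le\theta\le\pi/2-c\}$, $\{\theta\ge\pi/2-c\}$). These are genuinely different: your ``bulk'' region $\{\rho_1,\rho_2\ge 1\}$ at radius $\rho$ corresponds to $\theta\in[\arcsin(1/\rho),\pi/2-\arcsin(1/\rho)]$, which is strictly larger than the fixed interval $[c,\pi/2-c]=Y_0''$. So the image of your bulk map does not land in $[R_0,\infty)\times Y_0''$ as you claim; it spills into the exterior annuli $F_1\times\big((Z_2)_{c\rho}\setminus K_{Z_2}\big)$ of the caps. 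This is harmless once noticed --- on that overlap $g_2$ is already exactly conic, so both the $h_0$ description and the $\kappa_1+\rho^{-2}g_2$ description of $h_{1/\rho}$ coincide --- but the paper's ratio-based decomposition aligns directly with the three pieces of $h_\e$ and avoids this bookkeeping. Your final paragraph correctly identifies this as the only real subtlety.
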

\begin{proof}
Write $Z = Z_1 \times Z_2$. By assumption, $Z_i = K_{i} \sqcup C_{1, \infty} (F_i)$, and 
 \[
\left.g_i\right|_{C_{1,\infty}(F_i)} = d\rho_i^2 + \rho_i^2 \kappa_i + \tilde{g}_i, \qquad \|\tilde{g}_i\| = \sO(\rho_i^{-\nu_i})
\]
for some $\nu_i > 0$ (the norm of $\tilde{g}_i$ being taken with respect to the exact conic metric). It suffices
for our purposes to assume that $\tilde{g}_i =0$.  Recall too that the radial functions are extended to $K_i$ 
so that $\rho_i \geq 1/2$ everywhere.  As before, write $\rho^2 = \rho_1^2 + \rho_2^2$.

Consider the following resolution blowup $(Y,h_{\e})\in \sD_1$ of $(Y_0,h_0)$: replace the neighborhood
$F_1\times C_{0,c}(F_2)$ of the edge $\theta =0$ by the (truncated) product $F_1 \times (Z_2)_{c/\e}$, with
a similar replacement near $\theta = \frac{\pi}{2}$. The constant $c$ is chosen small. These replacements
`round out' the ends of the spherical suspension. This has the degenerating family of metrics 
\[
h_\e : =
\begin{cases}
     \kappa_1 + \e^2 g_2 +\tilde\eta_1 & \text{on}\ F_1\times (Z_2)_{c/\e}\\
     h_0 & \text{on}\ \{ c \leq \theta \leq \pi/2 - c\} \\
     \kappa_2 + \e^2 g_1 + \tilde\eta_2& \text{on}\ (Z_1)_{c/\e} \times F_2, 
\end{cases}
\]
where, as above, the error terms $\tilde{\eta}_i$ decay to order $2$ in $\epsilon$. 
Since the $g_i$ are exact cones outside $K_{i}$, $h_{\e}$ is smooth on $Y$. 

If $g$ is a smooth metric on $Z_1 \times Z_2$ such that 
\[
g= d\rho^2 + \rho^2 h_{1/\rho}\quad \mbox{when}\quad \rho = \rho_1^2 + \rho_2^2 \geq 1,
 \]
then our main assertion is equivalent to the statement that $g' = g - (g_1 \oplus g_2)$ has suitable decay. 
We decompose the end $\{\rho \geq 2\}$ of $Z$ into three regions: $\rho_1/\rho_2 \leq c'$, $c' \leq \rho_1/\rho_2 \leq 1/c'$
and $\rho_1/\rho_2 \geq 1/c'$, where $\tan c' = c$, and analyze $g'$ on each of these.  

On the middle region, $h_{1/\rho} = h_0$ is fixed and $g$ coincides exactly with the product metric $g_1 \oplus g_2$.
Clearly the analysis for the two outer regions is the same, so assume that $\rho_2/\rho_1 \leq c'$. On this region,
$g = d\rho^2 + \rho^2 \kappa_1+ g_2 + \rho^2 \tilde{\eta}_1$, so the issue is to estimate
\[
(d\rho^2 + \rho^2 \kappa_1) - (d\rho_1^2 + \rho_1^2 \kappa_1),
\]
which is straightforward. 
\end{proof}

\subsection{Decompositions and weight functions}\label{ssec:rbdf}
We now introduce a collection of `resolved radial functions' on any resolution blowup space, and a similar collection of resolved 
radial functions on any QAC space. More specifically, suppose that $(Y,h_\e) \in \sD_k$ is a resolution blowup of $(Y_0, h_0) \in \sI_k$.
We shall define families of functions $\{\sigma_{k,\e}, \ldots, \sigma_{1,\e}\}$ on $Y$, which are `smoothings' of the radial functions 
$s_j$ on $Y_0$ introduced at the end of \S 2.2. Similarly, if $(Z,g) \in \sQ_k$ is a QAC space with link $(Y,h_\e)$, 
then we introduce some closely related functions $\{\rho, w=( w_1,
\ldots, w_k)\}$ on $Z$. The $\{\sigma_{1,\e}, \ldots, \sigma_{k,\e}\}$ determine a 
geometric decomposition of $Y$, where the components of this decomposition are the resolution blowups of the corresponding
components in the decomposition of $Y_0$.  We use these decompositions in many of the constructions and proofs below.  
In addition, the functions $\rho$ and $w=(w_k, \ldots, w_1)$ on $Z$ are also used as weight factors in the function spaces below;
these are crucial for the proper formulation of our main Fredholm results. 

Just as in \S 2.2, we start gradually, first defining these decompositions and weight functions in the case $k=1$. To keep track of 
the various spaces below, we systematically include a parenthesized superscript index to indicate the depth of each 
of the spaces being considered. Thus, for example, $Z^{(k)}$ and $g^{(k)}$ denote a `generic' depth $k$ QAC space 
and a typical metric on it. This notation will be used, where appropriate, through the rest of the paper too. 

\medskip

\noindent{$\boxed{\sQ_0}$} 
First, suppose that $(Z^{(0)},g^{(0)})  \in \sQ_0$.  Let $\rho_0$ denote the radial function on the exterior region 
$(Z^{(0)})^{\mathrm{ext}} \cong C_{1,\infty}(Y^{(0)}) \cong [1,\infty) \times Y^{(0)}$, where $Y^{(0)}$ is a smooth compact manifold (i.e.\ 
an element of $\sD_0$). Extend $\rho_0$ to equal $1$ on the compact cap $K_{Z^{(0)}}$.  The functions $w_j$
do not appear in this lowest depth case. 

\medskip

\noindent{$\boxed{\sD_1}$} Next, fix $(Y^{(1)},h_\e^{(1)}) \in \sD_1$, with limiting space $(Y_0^{(1)},h_0^{(1)}) \in \sI_1$. 
Using the radial function $s_1$ on $Y_0^{(1)}$, we can decompose $Y_0^{(1)} = (Y_0^{(1)})' \cup (Y_0^{(1)})''$, 
corresponding to the regions where the radial function $s_1 < 1$ and $s_1 \geq 1$, respectively, just as 
in Definition~\ref{defn:I_k}.   Observe 
that $(Y^{(1)}_0)'$ is a bundle over the depth $1$ stratum $S_1$ with (truncated) conic fibres; 
in the resolution blowup, these conic fibres are replaced by truncations of some QAC space $Z^{(0)}$. This transforms
$(Y^{(1)}_0)'$ into something we shall call $(Y^{(1)})'$. The space $Y^{(1)}$ itself is assembled by gluing together 
$(Y^{(1)})'$ and $(Y_0^{(1)})''$ in the obvious way, which is possible since their boundaries are naturally
identified, and we then rename $(Y_0^{(1)})''$ as $(Y^{(1)})''$ when it is considered as a portion of $Y^{(1)}$. 

It remains to define the functions $\sigma_{1,\e}$ and metrics $h_\e$ on $Y^{(1)}$.  We first consider $\e\rho_0$, a priori defined on 
each fiber $Z^{(0)}$ of the fibration $(Y^{(1)})'\to Z^{(0)}$, as a function on $(Y^{(1)})'$. A metric on this neighborhood
is given by adding to the family of fiber metrics $\e^2 g^{(0)}$ the pullback of any metric from the stratum $S^1$. 
This results in a one-parameter a family of metrics $h_\e^{(1)}$ on this neighbourhood. Since $\e\rho_0 = 1$
at $\del (Y^{(1)})' $, i.e., along $(Y^{(1)})'  \cap (Y^{(1)})''$, we may extend it to equal $1$ on the second piece in
the decomposition $Y^{(1)} = (Y^{(1)})'  \cup (Y^{(1)})''$. Altogether, this defines $\sigma_{1,\e}$ by 
\[
\sigma_{1,\e}(y) = \begin{cases} 
    1 \quad & \mbox{if}\quad y \in (Y^{(1)})'', \\ 
   \e \rho_0(u)  & \mbox{if} \quad y \in (Y^{(1)})' \quad \mbox{projects onto $u\in Z^{\d{0}}$}.
\end{cases}
\]
The region $\{\sigma_{1,\e} = \e\}$ is the union of the compact components $K_{Z^{(0)}}$ in each fiber,
and we set $\rho_0 \equiv 1$ here. 

\medskip

\noindent{$\boxed{\sQ_1}$} If $(Z^{(1)},g^{(1)}) \in \sQ_1$ has link $(Y^{(1)},h_\e^{(1)}) \in \sD_1$, then $Z^{(1)}$ is the 
disjoint union of a compact cap and an exterior region,
\[
  Z^{(1)} = K_{Z^{(1)}} \sqcup (Z^{(1)})^{\mathrm{ext}}, \qquad  (Z^{(1)})^{\mathrm{ext}}\cong [1,\infty) \times Y^{(1)}. 
\]
The exterior region decomposes further into the two regions, $(Z^{(1)})' \cup (Z^{(1)})''$, where each
of these components is the product of the corresponding piece $(Y^{(1)})'$ or $(Y^{(1)})''$, with $[1,\infty)$.  
The natural radial function on this exterior region is
denoted $\rho_1$, which as before is extended to equal $1$ on $K_{Z^{(1)}}$. Note that $(Z^{(1)})''$ is 
simply the exterior cone over $(Y^{(1)})''$, with the standard conic metric,
\[
(Z^{(1)})''= C_{1,\infty} ( (Y^{(1)})''), \qquad \left. g^{{(1)}} \right|_{(Z^{(1)})''} = d\rho_1^2 + \rho_1^2 h_{1/\rho_1}^{(1)},
\]
(recall, however, that $h_\e^{(1)}$ is constant, or at last nondegenerating, over this portion of $Y^{(1)}$, so this is
effectively a conic metric on this piece).  
The region $(Z^{(1)})'$ is the cone over $Y^{(1)})'$. Recall that $(Y^{(1)})'$ is a bundle over the depth $1$ stratum $S_1$ 
whose fiber is the truncation of the QAC space $Z^{(0)}$ of depth $0$. In a metric trivialization of this bundle, the metric 
on $(Z^{(1)})'$ becomes 
\[
  g^{(1)}\lvert_{(Z^{(1)})'} = d\rho_1^2 + \rho_1^2 k_{S_1} + g^{(0)}
  \quad  \text{with} \quad  \rho_0 \leq \rho_1.
\]
Let $\theta = \frac{\rho_0}{\rho_1}$, so that $\frac{1}{\rho} \leq \theta \leq 1$. Writing
\[
  t_1= \rho_1 \cos\theta 
  \quad  \text{and} \quad
  s_1 = \rho_1 \sin \theta,
\]
we then see that this metric is quasi-isometric to the metric induced from the product metric on the `subdiagonal' 
of the product $C(S_1) \times Z^{(0)}$, i.e.\ 
\[ 
g^{(1)} \asymp dt_1^2 + t_1^2 k_{S_1} + g^{(0)} \quad \text{in the region} \quad  \rho_0 =1
\]
and 
\[
g^{(1)} \asymp dt_1^2 + t_1^2 k_{S_1}  + ds_1^2 + s_1^2 h^{(0)} \quad \text{in the region} \quad 1< \rho_0 \leq \rho_1.
\]
This is nothing else than the reverse of the observation in Section~\ref{product}
that the product of two AC spaces is a QAC space of depth $1$.

The weight function $w_1$ is defined by
\[
  w_1(z) = 
  \begin{cases}
    1 \quad & \text{if}\quad z \in  K_{Z^{(1)}} \cup (Z^{(1)})'' \\ 
    \sigma_{1,1/\rho_1}(y) & \mbox{if}\quad  z = (\rho_1,y) \in (Z^{(1)})' = [1,+\infty) \times (Y^{(1)})'.
  \end{cases}
\]

\noindent{$\boxed{\sD_k}$} We now turn to the general case.  Suppose that we have defined all decompositions 
and weight functions for all spaces $Z^{(j)}$ and $Y^{(j)}$ of depth $j < k$. If $Y_0^{(k)} \in \sI_k$, then the region
$(Y^{(k)}_0)'' = \{y \in Y^{(k)}_0: s_k(y) = 1\}$ contains only singularities of depth $k-1$ or lower,
so we may assume (using the doubling construction described above) that the resolution blowup of 
this region has been chosen. We explain how to use the lower
depth smoothed radial functions $\sigma_{j,\e}^{(\ell)}$, $1\leq j, \ell \leq k-1$, to define $\sigma^{(k)}_{j,\e}$, $1\leq j \leq k-1$
here, and then define $(Y^{(k)})''$, the metric $ h^{(k)}_\e$ as well as the extensions of the functions $\sigma_{j,\e}^{(k)}$, 
$1\leq j < k$, and finally define $\sigma_{k,\e}^{(k)}$ on this new region. 

To do all of this, note that $\del (Y^{(k)})''$ is a space $Y^{(k-1)} \in \sD_{k-1}$ and (by implicit hypothesis) we can choose 
some $Z^{(k-1)} \in \sQ_{k-1}$ whose truncations cap off (the fibres of) this end. (We also assume that the metrics agree.)
In other words, we define $(Y^{(k)})'$ to be the bundle over $S_k$ with fibres $Z^{(k-1)}_{1/\e} = \{z \in Z^{(k-1)}: \rho_{k-1}
\leq 1/\e\}$ and the metrics $h^{(k)}_\e$ here to be the sum of the pullback of a metric on $S_k$ and 
$\e^2 g^{(k-1)}$. Hence the metric compatibility is that $g^{(k-1)} = d\rho_{k-1}^2 + \rho_{k-1}^2 h^{(k-1)}_{1/\rho_{k-1}}$, where
$h^{(k-1)}_\e$ is the degenerating metric family on the fibres of $\del (Y^{(k)})''$. 

First, note that the functions $\sigma_{j,\e}^{(\ell)}$, $j, \ell < k$, can be chosen by induction on the double of
$(Y^{(k)})''$ and then restricted back to this subset.  We define
\[
\sigma^{(k)}_{k,\e}(y) = \begin{cases}
1 \quad & \mbox{if}\ y\in (Y^{(k)})'', \\
\e \rho_{k-1}(u) & \mbox{if}\ y \in (Y^{(k)})' \quad \text{projects onto}\ u \in Z^{(k-1)},
\end{cases}
\]
and
\[
\sigma^{(k)}_{j,\e}(y) = \begin{cases}
\sigma^{(k-1)}_{j,\e}(y) \quad & \mbox{if}\ y \in (Y^{(k)})'', \\
\sigma^{(k)}_{k,\e}(y) w^{(k-1)}_{j}(u) & \mbox{if}\ y \in (Y^{(k)})' \quad \text{projects onto}\ u \in Z^{(k-1)},
\end{cases} 
\]
for all $1\leq j < k$.
We leave the reader to check that these definitions give a function that is continuous (or at least has the
right orders of magnitude) at the intersections of the domains of definition. 

From this construction it is also immediate that each $\sigma^{(k)}_{j,\e}$ converges as $\e \to 0$ to the radial 
function $s_j = s_j^{(k)}$ on $Y_0$. 

\medskip

\noindent{$\boxed{\sQ_k}$} Finally, to complete this inductive definition, we describe the decomposition of $Z^{(k)}$, and the definition of $ g^{(k)}$
and the functions $\rho_k$, $w^{(k)}_j$, $1\leq j \leq k$. As in the depth $1$ case, we choose $(Y^{(k)}, h^{(k)}_\e) \in \sD_k$
and a compact cap $K_{Z^{(k)}}$ for $Y^{(k)}$. Then
\[
Z^{(k)} = K_{Z^{(k)}} \sqcup (Z^{(k)})^{\mathrm{ext}}, 
\]
\[
(Z^{(k)})^{\mathrm{ext}} = [1,\infty) \times Y^{(k)} = (Z^{(k)})' \cup (Z^{(k)})'',
\]
where $(Z^{(k)})'' = C_{1,\infty}((Y^{(k)})'')$ with metric $d\rho_k^2 + \rho_k^2 h^{(k)}_{1/\rho_k}$, which is 
QAC of depth $k-1$ since $h^{(k)}_{\e}$ is a resolution blowup metric of depth $k-1$ on $(Y^{(k)})''$.
Similarly, 
\begin{multline*}
 (Z^{(k)})' = \{ z= (\rho_k,y) \in [1, + \infty) \times (Y^{(k)})': 1 \leq \rho_{k-1} (q) \leq \rho_k \\
\mbox{where}\ q \ \text{is the projection of $y$ onto $Z^{(k-1)}$} \}. 
\end{multline*}
The metric here is given by the same formula, $d\rho_k^2 + \rho_k^2 h^{(k)}_{1/\rho_k}$. 

The radial function $\rho_k$ is the obvious one.
Using the functions $\sigma^{(k)}_{j,\e}$, $1\leq j \leq k$, on $Y^{(k)}$, we set
\[
w^{(k)}_j(z) = \begin{cases}
1 \quad & \mbox{if}\ y \in K_{Z^{(k)}} \\
\sigma^{(k)}_{j,1/\rho_k}(y) & \mbox{if}\ z= (\rho_k, y)\in (Z^{(k)})^{\mathrm{ext}}.
\end{cases}
\]

\subsubsection{The decompositions of $Y^{(k)}$ and $Z^{(k)}$}\label{sec:decomp}
We now define decompositions for each $Y^{(k)} \in \sD_k$ and $Z^{(k)} \in \sQ_k$ using the
families of functions defined above. 

First, using the functions $\{\sigma^{\d{k}}_{1,\e}, \ldots, \sigma^{\d{k}}_{k,\e}\}$, set 
\[
Y^{\d{k}}_{\d{j}} : = \{ y \in Y^{\d{k}}: \sigma^{\d{k}}_{j,\e}(y) < 1 \quad
\text{and} \quad \sigma^{\d{k}}_{i,\e}(y) \geq 1 \quad \text{for all}\ i \geq j+1\}. 
\]
There is an analogous decomposition of $Y^{\d{k}}_0$ using the radial functions $s_j$. 
Its `principal component' $(Y^{\d{k}}_0)_{\d{0}}$ is identified with $Y^{\d{k}}_{\d{0}}$.
The pieces here are disjoint and  $Y^{\d{k}}= \bigsqcup_{j=0}^k Y^{\d{k}}_{\d{j}}$. We later define an open cover by thickening each piece.

Next, in terms of the functions $\{\rho_k, w^{\d{k}}= (w^{\d{k}}_1,
\ldots, w^{\d{k}}_k)\}$ on $(Z^{\d{k}}, g^{\d{k}}) \in
\sQ_k$,  we write
\begin{equation}\label{eq:Zkj}
  Z^{\d{k}}_{\d{j}}: = \{z\in Z^{\d{k}} : w^{\d{k}}_j(z) <1 \quad
  \text{and} \quad w^{\d{k}}_i (z) =1 \quad \text{for all}\ i \geq j+1\}. 
\end{equation}
Note that each $Z^{\d{k}}_{\d{k}}$ is noncompact, and moreover, $K_{Z^{\d{k}}} \subset Z^{\d{k}}_{\d{0}}$.
This piece can be viewed as an AC space. On the other hand, if $j \geq 1$, then 
$Z^{\d{k}}_{\d{j}}$ is a cone over $Y^{\d{k}}_{\d{j}}$, and hence is the total space of a bundle 
over $C(S_j)$ with each fiber a truncated QAC space $(Z^{\d{j-1}}, g^{\d{j-1}}) \in \sQ_{j-1}$.

In terms of the functions $\{\rho_{j-1}, w^{\d{j-1}}=(w^{\d{j-1}}_{1},
\ldots w^{\d{j-1}}_{j-1})\}$ on each $Z^{\d{j-1}}$, the functions
$w^{\d{k}}=(w^{\d{k}}_1,\ldots, w^{\d{k}}_{k})$ on this piece are 
\begin{align}\label{eq:w-Zkj}
  w^{\d{k}}_i(z) & = 1 \quad & \text{for $i= j+1,\ldots, k$,}\nonumber\\
  w^{\d{k}}_j(z) & =  \frac{\rho_{j-1}(z_{j-1})}{\rho_k(x)}, &\\[1em]
  w^{\d{k}}_{i}(z) &= w^{\d{k}}_j(z)w^{\d{j-1}}_{i}(z_{j-1}) \quad
&  \text{for $i = 1, \ldots, j-1$}\nonumber
\end{align}
where $z\in Z^{\d{k}}_{\d{j}}$ corresponds to the pair $(q_{j}, z_{j-1})$ with $q_j \in C(S_j)$ and $z_{j-1} \in Z^{\d{j-1}}$.

\begin{lemma}
If $z \in Z^{\d{k}}$ and $w^{\d{k}}_\ell(z) = 1$ for some $\ell \geq 1$, then $w^{\d{k}}_i (z)  =1$ 
for every $i \geq \ell$. 
\end{lemma}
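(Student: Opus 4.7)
The plan is to exploit the stratification $Z^{(k)} = \bigsqcup_{j=0}^{k} Z^{(k)}_{(j)}$ from \eqref{eq:Zkj} together with the recursive formula \eqref{eq:w-Zkj}. The essential preliminary is that every weight $w^{(k)}_i$ takes values in $(0,1]$; granted this, the conclusion is immediate from unwinding the stratum conditions.

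First I would verify by induction on $k$ that $0 < w^{(k)}_i \leq 1$ everywhere on $Z^{(k)}$. The case $k=0$ is vacuous. For the inductive step, each weight is identically $1$ on the compact piece $K_{Z^{(k)}}$, while on the exterior region $w^{(k)}_i$ equals $\sigma^{(k)}_{i,1/\rho_k}$; according to the definitions in \S 2.4, this is a product of factors of the form $\rho_{j-1}/\rho_k$ (with $\rho_{j-1} \leq \rho_k$ on the relevant region of definition) and lower-depth weights $w^{(j-1)}_i$, all lying in $(0,1]$ by the inductive hypothesis.

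Next, fix $z \in Z^{(k)}$ and let $j$ be the unique index with $z \in Z^{(k)}_{(j)}$. The case $j=0$ is trivial, since then $w^{(k)}_i(z) = 1$ for all $i \geq 1$. Assume $j \geq 1$. By the defining property of the stratum one has $w^{(k)}_i(z) = 1$ for all $i \geq j+1$, and $w^{(k)}_j(z) = \rho_{j-1}(z_{j-1})/\rho_k(z) < 1$ strictly. For each $i$ with $1 \leq i < j$, formula \eqref{eq:w-Zkj} reads
\[
w^{(k)}_i(z) = w^{(k)}_j(z)\, w^{(j-1)}_i(z_{j-1}),
\]
so with the first factor strictly less than $1$ and the second at most $1$ by the preliminary bound, we conclude $w^{(k)}_i(z) < 1$ for every $i \leq j$. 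Hence the hypothesis $w^{(k)}_\ell(z) = 1$ with $\ell \geq 1$ forces $\ell \geq j+1$, and the stratum condition then delivers $w^{(k)}_i(z) = 1$ for all $i \geq \ell$, as required. The only actual work is the preliminary bound $w^{(k)}_i \leq 1$, which is a careful bookkeeping check against the recursive definitions; once this is secured, the lemma is just strict inequality propagating through the product formula \eqref{eq:w-Zkj}.
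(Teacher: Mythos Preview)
Your proof is correct and follows essentially the same approach as the paper's: both arguments hinge on the product formula \eqref{eq:w-Zkj} together with the bound $w^{(k)}_i \leq 1$, concluding that $w^{(k)}_\ell(z) = w^{(k)}_j(z)\, w^{(j-1)}_\ell(z_{j-1}) < 1$ whenever $\ell < j$ and $w^{(k)}_j(z) < 1$. The paper phrases this by contradiction (assume some $j > \ell$ with $w^{(k)}_j(z) < 1$, take the largest such $j$, derive $w^{(k)}_\ell(z) < 1$), while you give the direct version and spell out the preliminary bound $w^{(k)}_i \leq 1$ that the paper leaves implicit. One presentational caution: you invoke the disjoint decomposition $Z^{(k)} = \bigsqcup_j Z^{(k)}_{(j)}$, which in the paper is stated as a \emph{consequence} of this lemma; your argument is not actually circular, since the covering property follows directly from your preliminary bound $w^{(k)}_i \leq 1$ (take $j$ to be the largest index with $w^{(k)}_j(z) < 1$), but it would be cleaner to define $j$ this way rather than to appeal to the decomposition as if it were already established.
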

\begin{proof}
If, $w^{\d{k}}_j (z) <1$ for some $j > \ell$, while $w_{i}^{\d{k}}(z) = 1$ for $i \geq j+1$, 
then $z \in Z^{\d{k}}_{\d{j}}$. 
However, \eqref{eq:w-Zkj} implies that $w^{\d{k}}_\ell(z) <1$ instead, which is a contradiction.
\end{proof}
This shows that we have the disjoint decomposition $  Z = \bigsqcup_{j=0}^{k} Z^{\d{k}}_{\d{j}}$.
Many arguments in this work are inductive, and this decomposition is very well suited for these.
Indeed, in the decomposition of a space $Z^{(k)}$, the complement of $Z^{(k)}_{(k)}$ contains the 
portion of $Z^{(k)}$ which has depth no greater than $k-1$, and hence whatever result we are trying
to establish here holds using the induction hypothesis. It remains then only to show that
the result extends to $Z^{\d{k}}_{\d{k}}$. 
There are, however, instances where we also need to examine all the pieces
$Z^{\d{k}}_{\d{j}}$ in turn (see, for example, the estimates for the Green's function in Section~\ref{sec:Green}).

\subsubsection{The thickened decomposition}\label{sec:decomp-t}
It is useful to enlarge the pieces of $Z^{\d{k}}= \bigsqcup_{j=0}^{k} Z^{\d{k}}_{\d{j}}$ slightly.
Fix a constant $0< \eta < 1$ and define the {\em $\eta$-thickened pieces} of $Z^{\d{k}}$ by
\begin{equation}\label{eq:Zkj-t}
  Z^{\d{k}}_{\d{j}}(\eta): = \{z\in Z^{\d{k}} : w^{\d{k}}_j(z) <1 \
\text{and} \ w^{\d{k}}_i (z) > 1-\eta \ \ \text{for all}\ i \geq j+1\}. 
\end{equation}
Note that 
\[
Z_{\d{k}}^{\d{k}} (\eta) = Z_{\d{k}}^{\d{k}} \quad
\mbox{and}\quad Z^{\d{k}}_{\d{j}} \subset Z^{\d{k}}_{\d{j}}(\eta) \subset  \union_{l=j}^kZ^{\d{k}}_{\d{l}}.
\]
Moreover, for $j \geq 1$, $Z^{\d{k}}_{\d{j}}(\eta)$ is the total space of a bundle over $S_j$ with fiber 
$(Z^{\d{j-1}}, g^{\d{j-1}})\in \sQ_{j-1}$; the metric $g^{\d{k}}$ is locally quasi-isometric to a
product metric on $C(S_j) \times Z^{\d{j-1}}$. Note finally that the
weight functions $w^{\d{k}}=(w_1^{\d{k}}, \ldots, w_k^{\d{k}})$ on this piece
can be approximated by 
\begin{align}\label{eq:w-Zkj-t}
  w^{\d{k}}_i(z) & \asymp 1 \quad& i\geq j+1\nonumber\\
  w^{\d{k}}_j(z) & \asymp \frac{\rho_{j-1}(z_{j-1})}{\rho_k(z)},&\\[1em]
  w^{\d{k}}_{i}(z) &\asymp  w^{\d{k}}_j(z)w^{\d{j-1}}_{i}(z_{j-1}) \quad& i \leq j-1.\nonumber
\end{align}
Here we use the notation $f_1 \asymp f_2$ for two functions for which
there exists positive constants $c, C>0$ so that $c f_2 \leq f_1 \leq C f_2$
holds everywhere. We also say $f_1\preceq f_2$ if only the right
hand-side of the inequality is satisfied (see Table~\ref{table} at the end of the paper for
notational conventions).

From the above, it is clear that on a QAC manifold, the functions
$w^{\d{k}}= (w^{\d{k}}_1, \ldots, w^{\d{k}}_k)$ satisfy
\begin{equation}
\label{eq:w}
 w^{\d{k}}_1 \preceq \ldots \preceq  w^{\d{k}}_k.
\end{equation}

\subsection{Metrics and operators on QAC spaces}\label{sec:structure}
We now use the thickened decomposition of $(Z^{\d{k}}, g^{\d{k}}) \in \sQ_{k}$ to give an alternate 
description of the metric $g^{(k)}$ up to quasi-isometry on each piece $Z^{\d{k}}_{\d{j}}(\eta)$. 
The elliptic operators we study here are the `geometric' operators naturally associated
to these metrics.  We shall be studying the actions of these operators on weighted
Sobolev and H\"older spaces, and the eventual goal of this subsection is to study
the action of these operators on the weight functions themselves. 

As a first step, we record the
\begin{lemma}\label{lem:m}
If $(Z^{\d{k}},g^{\d{k}}) \in \sQ_k$ and $0 < \eta < 1$, then 
\begin{equation*}
\begin{aligned}
\left. g \right|_{Z^{\d{k}}_{\d{j}}(\eta)} \ \ \text{is quasi-isometric to}\ 
\end{aligned}
\left\{
\begin{aligned}
& d\rho_k^2 + \rho_k^2 h^{\d{k}}_0  & \text{if}\ j =0,\\
 & d\rho_k^2 + \rho_k^2 \kappa_{S_{j}} + g^{\d{j-1}} & \text{if}\ j \geq 1.
 \end{aligned}
\right.
\end{equation*}
The second identification holds on open sets in $Z^{\d{k}}_{\d{j}}(\eta)$ which are represented
as a product $C_{0,1}(S_j) \times Z^{\d{j-1}}$.
\end{lemma}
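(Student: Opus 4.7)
I would prove this by induction on the depth $k$, leveraging the recursive definitions in \S 2.3 and the weight-function formulas \eqref{eq:w-Zkj-t}. The base case $k=0$ is immediate: $\sQ_0$ contains only AC spaces, there is a single piece $Z^{(0)}_{(0)}$, and by the definition of AC the metric on the exterior is $d\rho_0^2+\rho_0^2 h^{(0)}_0 + g'$ with $|g'|=\sO(\rho_0^{-\nu})$.

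For the inductive step, I would begin from the formula
\[
\left. g^{(k)}\right|_{(Z^{(k)})^{\mathrm{ext}}} = d\rho_k^2 + \rho_k^2\, h^{(k)}_{1/\rho_k} + g', \qquad |g'|=\sO(\rho_k^{-1}),
\]
so that everything reduces to analyzing the cross-sectional metric $h^{(k)}_\e$ on each piece $Y^{(k)}_{(j)}(\eta)$ with $\e=1/\rho_k$. For $j=0$, the family $h^{(k)}_\e$ stays uniformly away from every singular stratum of $Y^{(k)}_0$, so it converges smoothly to the (smooth restriction of) $h^{(k)}_0$, and the rescaled cross-section becomes $\rho_k^2 h^{(k)}_0$, giving the first claimed form.

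The principal case $j=k$ is where the resolution blowup construction of \S 2.3.4 does the work directly. By construction, on the bundle $(Y^{(k)})' \to S_k$ with fiber $Z^{(k-1)}_{1/\e}$, the metric $h^{(k)}_\e$ is quasi-isometric to $\pi^*\kappa_{S_k} + \e^2 g^{(k-1)}$. Substituting $\e=1/\rho_k$ and multiplying by $\rho_k^2$ yields $\rho_k^2\kappa_{S_k} + g^{(k-1)}$, and adding $d\rho_k^2$ recovers the stated form on $Z^{(k)}_{(k)}(\eta)$. For intermediate $0<j<k$, I would use the identification from \eqref{eq:Zkj} and \eqref{eq:w-Zkj} that expresses $z\in Z^{(k)}_{(j)}(\eta)$ as a pair $(q_j,z_{j-1}) \in C(S_j)\times Z^{(j-1)}$, with $\rho_k$ playing the role of radial coordinate on $C(S_j)$ and $w^{(k)}_j \asymp \rho_{j-1}(z_{j-1})/\rho_k$. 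On this locus, the recursive construction of $h^{(k)}_\e$ from the resolution blowup of the depth-$(k-1)$ double $2Y_0^{(k)}$ expresses $h^{(k)}_{1/\rho_k}$, up to quasi-isometry, as the sum $\kappa_{S_j} + \rho_k^{-2}\, g^{(j-1)}$, where the second summand comes from the already-resolved $Z^{(j-1)}$-factor whose metric is obtained by applying the inductive hypothesis to $Z^{(j-1)}\in\sQ_{j-1}$. Multiplying by $\rho_k^2$ and adding $d\rho_k^2$ gives $d\rho_k^2 + \rho_k^2\kappa_{S_j} + g^{(j-1)}$.

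The hard part is purely bookkeeping: one must keep careful track of how the scales $\e$, $\rho_k$, $\rho_{j-1}$ and the nested weight functions $w^{(k)}_i$ fit together across the cascade of bundle structures built in \S 2.3.4, and confirm that the error terms picked up in passing between the formulas \eqref{eq:w-Zkj} and \eqref{eq:w-Zkj-t}, between the exact metric and its product model, and between the various resolution blowups at different depths, remain of lower order so that the final comparison is a genuine quasi-isometry on each $Z^{(k)}_{(j)}(\eta)$. Once this verification is done stratum-by-stratum, the three cases above combine to give the lemma.
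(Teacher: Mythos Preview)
Your proposal is correct and follows the same approach the paper intends: the paper's own ``proof'' is simply the sentence ``The proof is straightforward using the original description of the metric,'' and your inductive argument, which unwinds the recursive construction of $h^{(k)}_\e$ on each piece $Y^{(k)}_{(j)}(\eta)$ and then multiplies by $\rho_k^2$, is exactly the way to make that sentence precise. In fact you have supplied considerably more detail than the paper itself.
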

The proof is straightforward using the original description of the
metric. 


Associated to any $(Z^{(k)}, g^{(k)}) \in  \calQ_k$ are the class of geometrically natural elliptic operators.
If $\rho\colon \SO(n) \to V$ is any finite dimensional representation, $n = \dim Z$, then there is an induced
vector bundle $E \to Z$, $E = \calF(Z) \times_\rho V$, where $\calF(Z)$ is the bundle of oriented
orthonormal frames. This bundle inherits a connection $\nabla$ from the Levi-Civita connection of $g$, 
and we may then form the rough (or Bochner) Laplacian $\Delta = \nabla^* \nabla$. 

Any operator of the form
\[
\sL = \nabla^*\nabla + \sR, \qquad \sR \in \operatorname{End}(E),
\]
will be called a generalized Laplacian. We always assume that $\sR$ is bounded from below, $\sR \geq -C \mbox{Id}$. 
These are our main objects of study. 

It is equally straightforward to define a general class of first-order Dirac-type operators, but for 
simplicity, we focus exclusively on the second-order case. 

\subsubsection{Action on weight functions}\label{sec:nw} 
Fix $(Z^{\d{k}}, g^{\d{k}})\in \sQ_{k}$, and let $\{\rho_k, w^{\d{k}}=(w^{\d{k}}_{1}, \ldots, w^{\d{k}}_{k})\}$ be
the functions defined above. For any $a \in \RR$ and $b= (b_1, \ldots, b_k) \in \RR^k$, define the 
{\em weight functions}
\[
\rho_k(z)^a w^{\d{k}}(z)^{b} : = \rho_k(z)^a w^{\d{k}}_1(z)^{b_1} \ldots w^{\d{k}}_k(z)^{b_k}, \ z \in Z^{\d{k}}. 
\]
For $1\leq \ell \leq k$, write 
\[
b(\ell):= (b_1, \ldots, b_\ell) \in \RR^{\ell},
\] 
and note that $b(k) = b$. We also define 
\[
 |b(\ell)|: = b_1 + \ldots + b_\ell, 
\]
and make the convention that $b(0) = 0$ and $|b(0)|=0$. See 
Table~\ref{table} at the end of the paper for further notation. 

The following is now easy to deduce from~\eqref{eq:w-Zkj-t}:
\begin{equation}\label{eq:wf0}
\left. \rho_k(z)^a w^{\d{k}}(z)^{b} \right|_{Z^{\d{k}}_{\d{0}}(\eta)} \asymp \rho_k^{a}(z),
\end{equation}
and, if $j \geq 1$, 
\begin{equation}\label{eq:wfj}
\begin{split}
&\left. \rho_k(z)^a w^{(k)}(z)^b \right|_{Z^{(k)}_{(j)}(\eta)} \asymp \\
& \qquad \qquad \rho_{k}(z)^{a-|b(j)|} \rho_{j-1}(z_{j-1})^{|b(j)|} w^{\d{j-1}}(z_{j-1})^{b(j-1)}. 
\end{split}
\end{equation}
In this second expression we adopt a notation that will be used frequently below and, recalling
the local product representation $Z^{(k)}_{(j)} = C_{0,1}(S_j) \times Z^{\d{j-1}}$, write 
$z\in Z^{\d{k}}_{\d{j}}(\eta)$ as $(q_{j}, z_{j-1})$. 

\begin{proposition}\label{lem:Lap}
For $(Z^{(k)}, g^{(k)}) \in \sQ_k$, $a \in \RR$, $b \in \RR^k$ and $z \in Z^{\d{k}}$,  we have
\[
\begin{split}
&\left| \nabla  (\rho_k(z)^a w^{\d{k}}(z)^{b}) \right| \preceq \rho_k(z)^{a-1}  w^{\d{k}}(z)^{b-\uone} \quad \mbox{and}\\[1em]
&\left| \Delta (\rho_k(z)^a w^{\d{k}}(z)^{b}) \right|\preceq \rho_k(z)^{a-2}  w^{\d{k}}(z)^{b-\utwo}.
\end{split}
\]
\end{proposition}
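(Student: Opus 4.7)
The plan is to prove both estimates simultaneously by induction on the depth $k$, using the thickened decomposition $Z^{(k)} = \bigsqcup_{j=0}^k Z^{(k)}_{(j)}(\eta)$ and the quasi-isometric product description of the metric on each piece provided by Lemma~\ref{lem:m}. The base case $k=0$ is an AC space, where $w^{(k)}$ is absent and the weight is simply $\rho_0^a$; since $g^{(0)}$ is quasi-isometric to $d\rho_0^2 + \rho_0^2 h$ near infinity and equals an arbitrary smooth metric on the compact cap, a direct cone computation yields $|\nabla \rho_0^a| \preceq \rho_0^{a-1}$ and $|\Delta \rho_0^a| \preceq \rho_0^{a-2}$.

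For the inductive step, assume the proposition for all QAC spaces of depth at most $k-1$, and work piece by piece. On $Z^{(k)}_{(0)}(\eta)$ formula \eqref{eq:wf0} gives $\rho_k^a w^{(k),b} \asymp \rho_k^a$ and Lemma~\ref{lem:m} identifies the metric with a conic one, so the base-case computation again applies directly. On $Z^{(k)}_{(j)}(\eta)$ with $j \geq 1$, the metric is quasi-isometric to the product
\[
d\rho_k^2 + \rho_k^2 \kappa_{S_j} + g^{(j-1)}
\]
on $C(S_j) \times Z^{(j-1)}$, and \eqref{eq:wfj} factorises the weight as
\[
\rho_k^a w^{(k),b} \asymp \rho_k^{a-|b(j)|} \cdot \bigl(\rho_{j-1}^{|b(j)|} w^{(j-1),b(j-1)}\bigr),
\]
where the first factor depends only on the $(\rho_k,S_j)$ variables and the second only on $Z^{(j-1)}$. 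For a product metric, $|\nabla(fg)| \leq |\nabla f|\, |g| + |f|\, |\nabla g|$ and $\Delta(fg) = (\Delta f)g + f(\Delta g)$ when $f,g$ depend on disjoint factors; I apply these with $f = \rho_k^{a-|b(j)|}$, handled by an explicit conic computation on $C(S_j)$, and $g = \rho_{j-1}^{|b(j)|} w^{(j-1),b(j-1)}$, handled by the induction hypothesis on $(Z^{(j-1)},g^{(j-1)})\in\sQ_{j-1}$.

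The remaining task is to check that the resulting expressions are $\preceq$ the claimed target. For the gradient, the two contributions are bounded by
\[
\rho_k^{a-|b(j)|-1}\rho_{j-1}^{|b(j)|} w^{(j-1),b(j-1)}
\quad \text{and} \quad
\rho_k^{a-|b(j)|}\rho_{j-1}^{|b(j)|-1} w^{(j-1),b(j-1)-\uone},
\]
while applying \eqref{eq:wfj} to $b-\uone$ expresses the target $\rho_k^{a-1}w^{(k),b-\uone}$ on this piece as $\rho_k^{a-1-|b(j)|+j}\rho_{j-1}^{|b(j)|-j}w^{(j-1),b(j-1)-\uone}$. Dividing and using the relations $w_j^{(k)} \asymp \rho_{j-1}/\rho_k$ and $w_i^{(k)} \asymp w_j^{(k)} w_i^{(j-1)}$ for $i<j$ from \eqref{eq:w-Zkj-t}, the ratios reduce to $\prod_{i=1}^j w_i^{(k)}$ and $(w_j^{(k)})^{j-1}$ respectively, each $\leq 1$. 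The Laplacian estimate is entirely analogous, with every occurrence of $\uone$ replaced by $\utwo$ and the cone Laplacian identity $\Delta\rho_k^\alpha = -\alpha(\alpha+\dim S_j - 1)\rho_k^{\alpha-2}$ providing the first-factor estimate.

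The principal obstacle is bookkeeping: keeping track of the powers of $\rho_k$, $\rho_{j-1}$, $w_i^{(k)}$ and $w_i^{(j-1)}$ across the thickened pieces and verifying that the weight identities from \eqref{eq:w-Zkj-t} convert the product-form estimates into the stated bound on the full QAC space. A secondary subtlety is that Lemma~\ref{lem:m} provides only a quasi-isometric product description, so one must verify that the error terms in $g^{(k)}$ (of order $\sO(\rho_k^{-\nu})$ relative to the model) produce Christoffel corrections which contribute at strictly lower order and can be absorbed into the multiplicative constant hidden by $\preceq$. Both issues are essentially notational once the inductive scheme is set up, and no new analytic input is needed beyond the inductive hypothesis and the explicit weight identities already established in \S\ref{ssec:rbdf}.
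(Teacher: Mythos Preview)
Your approach is essentially the same as the paper's: induction on depth, using the quasi-isometric product structure on each thickened piece and the splitting $\nabla(fg)$, $\Delta(fg)$ for functions depending on disjoint factors. The paper is slightly more economical, observing that the region $\bigcup_{j<k} Z^{(k)}_{(j)}(\eta)$ is itself (essentially) QAC of depth $k-1$, so the induction hypothesis applies there directly and only the piece $j=k$ needs explicit treatment; your piece-by-piece argument for every $j$ is redundant but not incorrect in principle.

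There is, however, a real bookkeeping error. You are treating $\uone$ as the all-ones vector $(1,\ldots,1)$, whereas the paper's convention (stated in the notation table and in the remark immediately after the proposition) is $\underline{r}=(r,0,\ldots,0)$, so that $w^{b-\uone}=w^b/w_1$. With your reading, $|(b-\uone)(j)|=|b(j)|-j$, which is the source of the stray $j$'s in your expression for the target on $Z^{(k)}_{(j)}(\eta)$. This is not harmless: what your computation then establishes is only the weaker estimate
\[
|\nabla(\rho_k^a w^b)| \preceq \rho_k^{a-1}\,\frac{w^b}{\prod_{i=1}^k w_i},
\]
and since $\prod_i w_i \leq w_1$ this does \emph{not} imply the claimed bound $\preceq \rho_k^{a-1} w^b/w_1$. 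Once you correct the convention, $|(b-\uone)(j)|=|b(j)|-1$, the target on $Z^{(k)}_{(j)}(\eta)$ becomes
\[
\rho_k^{a-|b(j)|}\,\rho_{j-1}^{|b(j)|-1}\,w^{(j-1),\,b(j-1)-\uone},
\]
your two ratios simplify to $w_1^{(k)}$ and $1$ respectively (rather than $\prod_{i\le j} w_i^{(k)}$ and $(w_j^{(k)})^{j-1}$), and the argument goes through cleanly---indeed more simply than you wrote. The Laplacian case is identical with $\utwo=(2,0,\ldots,0)$.
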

We are using here the notation introduced in Table~\ref{table} that 
$ w^{\d{k}}(z)^{b-\underline{r}} =  w^{\d{k}}(z)^{b}/
w^{\d{k}}_{1}(z)^{r}$, $r = 1, 2$.  Moreover, we used the notation
$f_1\preceq f_2$ for two functions $f_1$ and $f_2$ for which there
exists a positive constant $C>0$ so that $f_1 \leq C f_2$ everywhere.

\begin{proof}
We prove this by induction on the depth. When $k=0$, $Z$ is an AC space and the assertion is clearly true.

Assume now the result holds for all QAC spaces of depth $\ell<k$, and let $(Z^{\d{k}}, g^{\d{k}})
\in \calQ_k$. Decompose $Z^{\d{k}}$ as the union $\bigcup Z^{\d{k}}_{\d{j}}(\eta)$ and express the weight
function using \eqref{eq:wfj} on each piece.  

Using the induction hypothesis, the assertion holds for $Z^{\d{k}}_{\d{j}}(\eta)$, $j \leq k-1$, 
so we must only check that it also holds on $Z^{\d{k}}_{\d{k}}(\eta)$. On this piece,
$g^{\d{k}}$ is quasi-isometric to $d\rho_k^2 + \rho_k^2 \kappa_{S_{k}} + g^{\d{k-1}}$ on 
$C(S_{k-1}) \times Z^{\d{k-1}}$, hence we must estimate 
\begin{equation}
\left| \nabla_{C(S_k) \times Z^{\d{k-1}}}  \left(\rho_{k}(z)^{a-|b(k)|} \rho_{k-1}(z_{k-1})^{|b(k)|}  w^{\d{k-1}}(z_{k-1})^{b(k-1)}\right) \right|
\label{nablaweight}
\end{equation}
and
\begin{equation}
(\Delta_{C(S_k)} + \Delta_{Z^{\d{k-1}}}) \left(\rho_{k}(z)^{a-|b(k)|} \rho_{k-1}(z_{k-1})^{|b(k)|}  w^{\d{k-1}}(z_{k-1})^{b(k-1)}\right).
\label{Lapweight}
\end{equation}
Note that $\nabla_{C(S_k)}$ and $\Delta_{C(S_k)}$ act only on $\rho_k(z)^{a-|b(k)|}$ while $\nabla_{Z^{\d{k-1}}}$ and $\Delta_{Z^{\d{k-1}}}$ act 
on the remaining factors. By induction, 
\begin{multline*}
\left|\nabla_{C(S_k)} \rho_{k}(z)^{a-|b(k)|} \right| \preceq C \rho_k(z)^{a-|b(k)|-1}, \quad \mbox{and} \\
\left|\Delta_{C(S_k)} \rho_{k}(z)^{a-|b(k)|} \right| \preceq C \rho_k(z)^{a-|b(k)|-2},
\end{multline*}
while 
\begin{multline*}
\left|\nabla_{Z^{\d{k-1}}}\rho_{k-1}(z_{k-1})^{|b(k)|}  w^{\d{k-1}}(z_{k-1})^{b(k-1)}\right|  \\\leq
C \left| \rho_{k-1}(z_{k-1})^{|b(k)|-1}  w^{\d{k-1}}(z_{k-1})^{b(k-1)-\uone}\right|.  
\end{multline*} 
Finally, by the same induction
\begin{multline*}
\big\lvert \Delta_{Z^{\d{k-1}}} \rho_{k-1}(z_{k-1})^{|b(k)|} w^{\d{k-1}}(z_{k-1})^{b(k-1)} \big\lvert  \\[.5em]
\leq \rho_{k-1}(z_{k-1})^{|b(k)|-2} w^{\d{k-1}}(z_{k-1})^{b(k-1)-\utwo}. 
\end{multline*}
Thus \eqref{Lapweight} is estimated by
\begin{multline*}
C \left( \rho_k(z)^{a-|b(k)|-2} \rho_{k-1}(z_{k-1})^{|b(k)|} w^{(k-1)}(z_{k-1})^{b(k-1)} \right. \\
\left. + \rho_k(z)^{a-|b(k)|} \rho_{k-1}(z_{k-1})^{|b(k)|-2} w^{(k-1)}(z_{k-1})^{b(k-1)-\utwo}\right)  \\
=  C\left(\rho_k(z)^{a-2} w^{(k)}(z)^b + \rho_k(z)^{a-2} w^{(k)}(z)^{b-\utwo}\right).
\end{multline*}
Since $w^{(k)}_1(z) \leq 1$, the last term dominates.
\end{proof}

\subsubsection{Function spaces}
To simplify notation here, let us omit the superscripts $(k)$ which indicate depth. We wish
to describe the various function spaces considered in this paper. We shall define a scale of weighted
$L^2$-based Sobolev and weighted H\"older spaces. The main results in this paper are that
generalized Laplacians are Fredholm between these spaces for certain ranges of the weight parameters. 

Fix $(Z,g)\in \sQ_k$, with weight functions $\{\rho, w=(w_1, \ldots, w_k)\}$ as defined above. 
First, for any $\delta \in \RR$ and $\tau \in \RR^k$, define 
\[
\rho^{\delta+\frac{n}{2}}w^{\tau + \frac{\nu}{2}}L^2(Z, dV_g) = \{ f :
\rho^{-\delta-\frac{n}{2}}w^{-\tau - \frac{\nu}{2}}f\in  L^2(Z, dV_g)\}.
\]
Here $\nu$ is the $k$-tuple $(\nu_1, \ldots, \nu_k)$ whose entries
depend only on the dimensions $m_j$ of the lower QAC spaces in $Z$,
and are defined in the following manner:
\begin{equation}
  \nu_1 = m_0, \quad \nu_j = m_{j-1}-m_{j-2} \quad \text{for all
    $j\geq 2$}.
\label{shiftparam}
\end{equation}
It has the property that $|\nu(j)| = \nu_1 + \ldots + \nu_j$ is
exactly $m_{j-1}$.
The weight factors are shifted by $n/2$ and $\nu/2$, respectively, to compensate for
the weight factors in the measure $dV_g$. More specifically, on an AC space with cross-section
$(Y,h)$, $dV_g = \rho^{n-1} d\rho\, dA_h$, so 
\[
\rho^{\delta'} \in \rho^{\delta + \frac{n}{2}} L^2(Z, dV_g)\quad
\text{if and only if} \quad  \delta' < \delta.
\]
When $Z$ is QAC, the effect of the $n/2$ in the $\nu/2$ factors is similar: it will follow from Proposition~\ref{lem:anch_est}
that $\rho^{\delta'}w^{\tau'} \in \rho^{\delta+\frac{n}{2}}w^{\tau + \frac{\nu}{2}}L^2(Z, dV_g)$ precisely when
\[
  \delta' < \delta
 \quad \text{and} \quad \delta' +|\tau'(j)| < \delta
  + |\tau(j)|
\]
for all $1\leq j \leq k$.
This allows for simpler bookkeeping and a more direct comparison with the weighted H\"older spaces; also our results can 
then be stated in a manner which are direct generalizations of the standard results for AC spaces (see for example the 
statement of Theorem~\ref{thm:Fredholm}).

We next define the weighted Sobolev spaces of order $s > 0$. When $s \in \mathbb N$, 
$\rho^{\delta + n/2}  w^{\tau + \nu/2} H^s(Z, dV_g)$ consists of functions (or sections) $f$ such that 
$\nabla^j f \in \rho^{\delta + n/2 -j} w^{\tau + \nu/2 -\underline{j}} L^2$ for $0 \leq j \leq s$.  (Recall here that 
the $\underline{j}$ in the exponent for $w$ means that we lower the power of $w_1$ by $j$, but leave the exponents 
of the other $w_i$ unchanged.) The spaces for $s \in \RR$, $s > 0$ are then defined by interpolation, while those for $s < 0$ 
are obtained by duality. 

It is straightforward to show, using Proposition~\ref{lem:Lap}, that 
\begin{equation}
\Delta\colon  \rho^{\delta+\frac{n}{2}}w^{\tau + \frac{\nu}{2}}H^{s+2}(Z, dV_g) \to  \rho^{\delta+\frac{n}{2}-2}w^{\tau+ \frac{\nu}{2}-\utwo}H^s(Z, dV_g) 
\label{mapdeltaweight}
\end{equation}
is bounded for any $\delta \in \RR$ and $\tau \in \RR^k$. The same holds for the generalized Laplacian 
\begin{equation}
\begin{split}
\sL= \nabla^*\nabla + \sR\colon  \rho^{\delta+\frac{n}{2}}w^{\tau +   \frac{\nu}{2}}H^{s+2}(Z, E, dV_g) \to  \qquad \qquad \qquad \\
\qquad \qquad \qquad \rho^{\delta+\frac{n}{2}-2}w^{\tau+ \frac{\nu}{2}-\utwo}H^s(Z, E, dV_g) 
\end{split} 
\label{mapdeltaweight-gL}
\end{equation}
provided $\sR \in \operatorname{End}(E)$ satisfies the pointwise bounds
\begin{equation}
  |\sR| \preceq \rho^{-2} w^{-\utwo} = \rho^{-2} w_1^{-2}.
\label{bddR}
\end{equation}
 However, for certain discrete values of the weight parameters, 
\eqref{mapdeltaweight} and \eqref{mapdeltaweight-gL} do not  have closed range, while for other values, their kernel or cokernel 
may be infinite dimensional.  Our goal is to determine the weights for
which \eqref{mapdeltaweight} and \eqref{mapdeltaweight-gL} are
Fredholm. 

We now turn to weighted H\"older spaces. We first record a Lemma which shows that
the weight functions are essentially constant on small geodesic balls. 
\begin{lemma}\label{lem:weight-const}
Let $\inj_g(Z)$ denote the injectivity radius of $(Z, g) \in \sQ_k$. Then there exists a constant 
$c \in (0, \inj_g(Z))$ so that for all $z\in Z$, 
\[
\rho(z') \asymp \rho(z), \quad w_{j} (z') \asymp   w_j (z) \quad
 \forall \ 1\leq j \leq k
 \]
 for all $z' \in B(z, c)$. 
\end{lemma}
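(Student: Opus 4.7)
The plan is to deduce the lemma from Proposition~\ref{lem:Lap}, combined with the uniform lower bound $\rho\, w_1 \geq 1$ on $Z$, which I would establish first.

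Step 1 (Uniform lower bound on $\rho w_1$). By induction on the depth $k$, I would show that $\rho_k(z)\, w_1^{(k)}(z) \geq 1$ for every $z \in Z^{(k)}$. The base case $k=0$ is trivial since $\rho_0 \geq 1$ and there is no $w_1$ factor. For the induction step, I decompose $Z^{(k)} = \bigsqcup_{j=0}^{k} Z^{(k)}_{(j)}$. On $K_{Z^{(k)}}$ all weights equal $1$, and on $Z^{(k)}_{(0)} \setminus K_{Z^{(k)}}$ one has $w_1^{(k)} = 1$ by \eqref{eq:Zkj}, so the claim reduces to $\rho_k \geq 1$. On $Z^{(k)}_{(j)}$ with $j \geq 1$, the recursive formulas \eqref{eq:w-Zkj} give
\[
w_1^{(k)}(z) = \frac{\rho_{j-1}(z_{j-1})}{\rho_k(z)}\, w_1^{(j-1)}(z_{j-1}),
\]
so $\rho_k(z)\, w_1^{(k)}(z) = \rho_{j-1}(z_{j-1})\, w_1^{(j-1)}(z_{j-1}) \geq 1$ by the induction hypothesis.

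Step 2 (Uniform bound on logarithmic gradients). I would apply Proposition~\ref{lem:Lap} with $a=1$, $b=0$ to obtain $|\nabla \rho|_g \preceq w^{-\uone} = w_1^{-1}$, and with $a=0$, $b = e_i$ (the $i$-th coordinate vector in $\RR^k$) to obtain $|\nabla w_i|_g \preceq \rho^{-1}\, w^{e_i-\uone} = w_i/(\rho w_1)$. Dividing by $\rho$, respectively by $w_i$, and using Step 1, this yields
\[
|\nabla \log \rho|_g + \sum_{i=1}^{k} |\nabla \log w_i|_g \preceq \frac{1}{\rho\, w_1} \leq C
\]
everywhere on $Z$, for some constant $C$ depending only on $(Z,g)$.

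Step 3 (Integration along geodesics). Fix $z \in Z$ and choose $c \in (0, \inj_g(Z))$ to be determined. For $z' \in B(z,c)$, let $\gamma$ be the unique minimizing geodesic from $z$ to $z'$; its length is at most $c$. Integrating the inequality of Step 2 along $\gamma$ gives
\[
|\log \rho(z') - \log \rho(z)| + \sum_{i=1}^{k} |\log w_i(z') - \log w_i(z)| \leq C\, d_g(z,z') \leq Cc.
\]
Choosing $c$ small enough that $Cc \leq \log 2$ yields $\rho(z')/\rho(z)$ and each $w_i(z')/w_i(z)$ in $[1/2, 2]$, which is exactly the asserted quasi-isometry $\rho(z') \asymp \rho(z)$ and $w_j(z') \asymp w_j(z)$.

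The main subtlety is that the weight functions $w_i$ defined in \S\ref{ssec:rbdf} are only piecewise smooth across the boundaries between the pieces $Z^{(k)}_{(j)}$, so Proposition~\ref{lem:Lap} should be understood in the sense appropriate to the thickened decomposition of \S\ref{sec:decomp-t} (cf.\ Lemma~\ref{lem:m}). To integrate along $\gamma$ across such boundaries, one either observes that the boundaries are a closed set of measure zero along the geodesic, or, equivalently, replaces each $w_i$ by a smooth quasi-isometric approximant assembled via a partition of unity subordinate to the open cover $\{Z^{(k)}_{(j)}(\eta)\}$. Since the conclusion is only a $\asymp$-statement, these modifications are harmless, and the argument above produces the required constant $c$.
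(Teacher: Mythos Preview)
Your proof is correct but follows a genuinely different route from the paper's. The paper argues by direct induction on the depth: for $k=0$ it uses the obvious estimate $(1-c)\rho(z)\le\rho(z')\le(1+c)\rho(z)$ on the conic end, and for the inductive step it uses the decomposition $Z^{(k)}=\bigsqcup_j Z^{(k)}_{(j)}$, chooses $\eta$ so that each ball $B(z,c)$ sits inside a thickened piece $Z^{(k)}_{(j)}(\eta)$, and then appeals to the local product structure $C(S_k)\times Z^{(k-1)}$ together with the recursive formulas \eqref{eq:w-Zkj} and the inductive hypothesis on the lower-depth factor. Your argument instead extracts the uniform lower bound $\rho\,w_1\ge 1$ (a clean observation not stated explicitly in the paper), feeds it into the gradient estimate of Proposition~\ref{lem:Lap} to obtain a uniform bound on $|\nabla\log\rho|_g$ and each $|\nabla\log w_i|_g$, and then integrates along a minimizing geodesic. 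This is shorter and yields an explicit comparison constant (e.g.\ $2$), at the cost of invoking Proposition~\ref{lem:Lap}, whose proof already contains the inductive machinery the paper repeats here. Your closing remark about the piecewise definition of the $w_i$ is apposite; since Proposition~\ref{lem:Lap} is itself proved on the thickened pieces and the conclusion is an $\asymp$-statement, this is indeed harmless.
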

\begin{proof}
We do this by induction. To keep track of the depth we reintroduce
sub/superscripts. In the depth $0$ (AC) case, there exists $R_0 >1$ so that 
if $\rho_0(z) > R_0$, then $B(z, c) \subset \{ z \in Z:\rho_0(z) >1\}$.  For any such $z$,
$(1-c) \rho_0(z) \leq \rho_0(z') \leq (1+c) \rho_0(z)$ when $z' \in  B(z, c)$.  The remaining
region in $Z$ is compact, so the bound is trivial there. 

Now assume that the assertion holds for all the QAC spaces of depth strictly less than $k$, and let us 
prove it for $(Z,g) \in \sQ_k$. Use the disjoint decomposition $Z = \bigsqcup_{j=0}^k Z^{\d{k}}_{\d{j}}$ and
its thickening $Z = \bigcup_{j=0}^k Z^{\d{k}}_{\d{j}}(\eta)$, where $\eta$ is chosen so that if $z \in Z^{\d{k}}_{\d{j}}$,
then $B(z, c) \subset Z^{\d{k}}_{\d{j}}(\eta)$. By the induction hypothesis, it suffices to assume that 
$B(z, c) \subset Z^{\d{k}}_{\d{k}}$.  Since $Z^{\d{k}}_{\d{k}}$ is a cone over $Y^{\d{k}}_{\d{k}}$, which is in
turn the total space of a bundle over $S_k$ with fiber certain truncations of $Z^{\d{k-1}} \in \sQ_{k-1}$, 
we can choose $c$ so that all these balls lie in trivialized charts. On each such neighborhood we 
can assume that the metric is a product, so
\[
B(z, c) \subset B(q_{k}, c) \times B(z_{k-1},c) \subset C(S_k) \times Z^{\d{k-1}}
\]
and
\[
w^{\d{k}}_k(z) = \frac{\rho_{k-1}(z_{k-1})}{\rho_k(p)}, \quad
w_j^{\d{k}}(z) = w^{\d{k}}_k(z) w^{\d{k-1}}_{j} (z_{k-1}) \quad 1\leq j \leq k-1. 
\]
By induction again, $\rho_{k-1}(z'_{k-1})$ and $w^{\d{k-1}}_j(z'_{k-1})$ are comparable to 
$\rho_{k-1}(z_{k-1})$ and $w^{\d{k-1}}_j(z_{k-1})$, respectively, if $z_{k-1}' \in B(z_{k-1}, c)\subset Z^{\d{k-1}}$. 
It is also clear that  $\rho_k(z')$ is comparable to $\rho_k(z)$ when $z'\in B(z, c)$, so the result follows. 
 \end{proof}

\begin{remark}
This Lemma makes clear that any QAC space $(Z,g)$ has bounded geometry, i.e., it is covered by balls with
radius uniformly bounded below, on each of which the curvature tensor and its derivatives are uniformly
bounded. 
\end{remark}

Using this lemma, we define for each $\gamma \in (0,1)$ the weighted H\"older seminorm on $Z$ as the supremum 
over all balls $B(z, c)$ in $Z$ of the weighted H\"older seminorm on that ball with respect to the distange $d$:
\begin{equation}
[ T]_{0, \gamma} = \sup_{z \in Z} \, \rho(z)^{-\gamma} w(z)^{-\underline{\gamma}} \sup_{ z'\neq z \atop z' \in B(z,c)} \frac{ |T(z) - T(z')|}{ d(z,z')^\gamma}.
\label{seminorm}
\end{equation}
(If $T$ is a tensor, then we parallel transport $T(z')$ on the radial geodesic from $z$ to $z'$ to make
sense of the difference.) 

Now define the weighted H\"older space $\rho^{\delta}w^{\tau} \sC^{k,\gamma}_g(Z)$ to be the set of functions 
$f\in \sC^{k}(Z)$ for which the norm
\[
  \|f\|_{\rho^{\delta}w^{\tau} \sC^{k,\gamma}}= \sum_{j=0}^k \sup_{Z}
  |\rho^{-\delta+j} w^{-\tau+\underline{j}}\,\nabla^jf| + [\rho^{-\delta+k}w^{-\tau+\underline{k}}\,\nabla^kf]_{0,\gamma}
\]
is finite.  Clearly, if $f\in\rho^\delta w^\tau \calC^{k,\gamma}_g(Z)$, then $f=\calO_k(\rho^\delta w^\tau)$; in particular
\[
  \rho^{\delta'}w^{\tau'} \calC^{k,\gamma}_g(Z) \subset \rho^{\delta'+\frac{n}{2}}w^{\tau' +\frac{\nu}{2}}H^k(Z,dV_g)
\]
for all $\delta'\in \RR$ and $\tau \in\RR^k$ so that
\[
  \delta'<\delta 
 \quad \text{and} \quad \delta' +|\tau'(j)| < \delta
  + |\tau(j)|
\]
for all $1\leq j \leq k$.

Once again, it is straightforward that 
\begin{equation}
\Delta\colon \rho^{\delta}w^{\tau}\calC^{k+2,\gamma}_g(Z) \to  \rho^{\delta-2}w^{\tau -\utwo}\calC^{k,\gamma}_g(Z)
\label{mapdeltaweight2}
\end{equation}
is bounded for any $k \geq 0$, and for any $\delta \in \RR$ and $\tau
\in \RR^k$.  The same holds for the generalized Laplacians
\begin{equation}
\sL=\nabla^*\nabla+\sR\colon  \rho^{\delta}w^{\tau}\calC^{k+2,\gamma}_g(Z, E) \to
\rho^{\delta-2}w^{\tau -\utwo}\calC^{k,\gamma}_g(Z, E),
\label{mapdeltaweight2-gL}
\end{equation}
provided $\sR\in \operatorname{End}(E)$ satisfies the pointwise bound
in~\eqref{bddR}.
As before, they fail to have closed range when $\delta$ lies in a discrete
set (these are the analogues of the familiar indicial roots in the AC case).  

Our main result in this paper concerns when the mappings \eqref{mapdeltaweight} and 
\eqref{mapdeltaweight2} for the scalar Laplacian
$\Delta$ and \eqref{mapdeltaweight-gL} and
\eqref{mapdeltaweight2-gL} for the
generalized Laplacian $\sL = \nabla^* \nabla + \sR$ acting on
sections of a geometric vector bundle $E$ are Fredholm.
Because they take some notation to set up, we defer the statement of the precise results to \S 7. 

\section{Heat kernel and Green function}
As outlined in the introduction, we shall show that the generalized Laplacians $\calL$ we consider are Fredholm between 
the Sobolev and H\"older spaces, for appropriate ranges values of the
weight parameters. We do this by constructing the 
corresponding Green functions $G_{\sL}$ and studying their mapping  properties on these function spaces. 
A crucial observation is that if we construct Green functions with suitable mapping properties on each end 
of $Z$ (with any elliptic boundary conditions at the compact boundary), then we may glue these kernels
together to obtain a good parametrix for $\calL$ on the entire space $Z$. This allows us to reduce
to spaces which are connected at infinity, where there is a good set of tools.

The method we employ to construct $G_{\sL}$ is to express it in terms of the heat kernel $H_{\sL}$ using the Laplace transform:
\begin{equation}
G_\calL (z,z') = \int_0^\infty e^{-t \calL}(z,z')\, dt = \int_0^\infty H_\calL(t,z,z')\, dt.
\label{Laplace}
\end{equation}
Note that this integral converges on 
any finite interval $[0,T]$ for $z \neq z'$; the more difficult issue is to estimate the behavior of $H_{\sL}$ as 
$t \nearrow \infty$ and to prove that this integral converges there. This requires the estimate $|H_{\sL}(t,z,z')| \leq C t^{-1-\e}$ 
for some $\e > 0$ when $t$ is large enough.  As we explain later,  the precise decay rate of $H_{\sL}$ as $t$ tends to infinity (even just along the 
diagonal $\{z = z'\}$) determines the asymptotic off-diagonal bounds for $G_{\sL}(z,z')$ as $d(z,z') \to \infty$.

The first main step in this analysis is to obtain these large-time bounds on $H_{\sL}$. To do this, we first show that 
under certain restrictions on the endomorphism term $\calR$ in $\calL = \nabla^* \nabla + \calR$, $H_\calL$ 
is dominated by the heat kernel for the scalar problem $H_{\Delta + V}$, where $V = -\Delta h/h$ for some 
strictly positive smooth function $h$. This scalar heat kernel is equivalent to the heat kernel for a weighted
operator $\Delta_\mu$, which is self-adjoint with respect to the measure $d\mu = h^2 dV_g$. We then 
follow a powerful and general method of estimation of such scalar heat kernels developed by Grigor'yan and 
Saloff-Coste \cite{Grigoryan2005} which gives sharp upper and lower bounds for $H_{\Delta_\mu}$ in terms of the volumes of 
geodesic balls of radius $\sqrt{t}$ with respect to $d\mu$. These estimate require that the `weighted manifold' 
$(Z, g, d\mu)$ satisfies a set of geometric and analytic properties. 

In this section we explain these reductions carefully and then, in the next section, verify that these geometric properties hold.
Tracing back through the reductions, we obtain bounds for $G_\calL$ itself. The latter sections of this paper 
use these bounds to determine the weighted spaces on which $G_\calL$ is bounded.

\subsection{Heat kernel domination}
Consider the operator $\sL = \nabla^* \nabla + \sR$ on an arbitrary complete manifold $Z$,
possibly with compact boundary, acting on sections of a Hermitian vector bundle $E$ with connection over
$Z$, and where $\sR$ is a semibounded endomorphism of $E$. If $Z$ has boundary, we consider the Friedrichs
extension of $\sL$. Thus $\sL$ is a self-adjoint operator acting on $L^2(Z, dV_g)$ (with Dirichlet
boundary conditions if there is a boundary), and there is a uniquely defined minimal heat kernel $H_{\calL}$. 
In this subsection we recall one standard technique to estimate $H_{\sL}$ in terms of the heat kernel of a scalar operator. 

Fix a smooth strictly positive function $h$ on $Z$ and consider the measure $d\mu = h^2 dV_g$.  This determines the 
weighted $L^2$ inner product
\[
\langle u , v \rangle_\mu = \int_Z \langle u, v \rangle\, d\mu,
\]
where the inner product in the integrand is the Hermitian metric on the bundle.  Let $\nabla^{*,\mu}$ be 
the adjoint with respect to this measure, i.e., $\nabla^{*,\mu} = h^{-2} \nabla^* h^2$, and $\nabla^{*,\mu} \nabla$ 
the rough Laplacian, 
\begin{equation}
\nabla^{*,\mu}\nabla = h^{-2} \nabla^* h^2 \nabla.
\label{roughLap}
\end{equation}

\begin{lemma}\label{lem:muL} 
Let $V =  - \frac{\Delta h}{h}$. The operator
\[
  \widetilde{\calL}= \nabla^{*,\mu} \nabla + \sR - V\!\cdot\mathrm{Id}
\]
is the Doob transform of $\calL$, i.e.
\begin{equation}\label{eq:muL}
  \widetilde{\calL} = \frac{1}{h}\circ\sL \circ h.
\end{equation}
In particular, the Doob transform of the scalar operator $\Delta+V$ is
\begin{equation}
 \Delta_\mu = \frac{1}{h} \circ \Delta \circ h + V.
\label{eq:muDel}
\end{equation}
Finally, the heat kernels for $\calL$ and $\widetilde{\calL}$ are
related by
\begin{equation}\label{eq:muHeat}
 H_{\sL}(t,z,z') = h(z) h(z') H_{\wtilde\sL} (t,z,z')
\end{equation}
 for all $(t,z,z') \in (0, +\infty) \times Z \times Z$.
\end{lemma}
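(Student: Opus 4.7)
The plan is to prove the two operator identities \eqref{eq:muL} and \eqref{eq:muDel} by a direct product-rule computation, and then deduce the heat kernel relation \eqref{eq:muHeat} from uniqueness of the minimal (Friedrichs) heat kernel.

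First I would verify the conjugation identity $\widetilde{\sL} = h^{-1}\circ\sL\circ h$ pointwise on smooth sections $u$ of $E$. Using the product rule for the Hermitian connection, $\nabla(hu) = dh\otimes u + h\,\nabla u$, and then the standard formula
\[
\nabla^*\nabla(hu) = (\Delta h)\,u - 2\,\nabla_{\nabla h}u + h\,\nabla^*\nabla u,
\]
(valid because $h$ is a scalar function and $\Delta = \nabla^*\nabla$ on functions with the sign convention of the paper), one computes
\[
\tfrac{1}{h}\sL(hu) = \nabla^*\nabla u + \sR\, u + \tfrac{\Delta h}{h}\,u - \tfrac{2}{h}\,\nabla_{\nabla h}u
= \nabla^*\nabla u - \tfrac{2}{h}\,\nabla_{\nabla h}u + \sR\, u - V\,u.
\]
On the other hand, since $\nabla^{*,\mu} = h^{-2}\nabla^*h^2$, the identity $\nabla^*(f\omega) = f\nabla^*\omega - \langle df,\omega\rangle$ for functions $f$ and bundle-valued $1$-forms $\omega$ gives
\[
\nabla^{*,\mu}\nabla u = h^{-2}\nabla^*(h^2\nabla u) = \nabla^*\nabla u - \tfrac{2}{h}\,\nabla_{\nabla h}u,
\]
which matches the previous display and establishes \eqref{eq:muL}. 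Specializing to the trivial line bundle with $\sR\equiv 0$ yields \eqref{eq:muDel}.

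For the heat kernel identity, set $K(t,z,z'):=h(z)h(z')\,H_{\widetilde{\sL}}(t,z,z')$ and check that $K$ is the Schwartz kernel of $e^{-t\sL}$ with respect to $dV_g$. Differentiating in $t$ and using $\partial_tH_{\widetilde{\sL}} = -\widetilde{\sL}_z H_{\widetilde{\sL}}$, combined with the already-proven conjugation $\sL_z\circ h(z) = h(z)\circ\widetilde{\sL}_z$, one gets $(\partial_t+\sL_z)K = 0$ away from the diagonal. For the initial condition, observe that $H_{\widetilde{\sL}}$ is the heat kernel relative to $d\mu = h^2 dV_g$, so for $u\in C_c^\infty$,
\[
\int_Z K(t,z,z')\,u(z')\,dV_g(z')
= h(z)\!\int_Z H_{\widetilde{\sL}}(t,z,z')\,\tfrac{u(z')}{h(z')}\,d\mu(z')
\;\longrightarrow\; h(z)\cdot\tfrac{u(z)}{h(z)} = u(z)
\]
as $t\searrow 0$. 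If $Z$ has boundary, the Dirichlet condition on $H_{\widetilde{\sL}}$ transfers immediately to $K$ since $h>0$ is smooth up to the boundary. Uniqueness of the minimal Friedrichs heat kernel then forces $K = H_{\sL}$, which is \eqref{eq:muHeat}.

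The computations are all elementary; the only mild subtlety worth flagging carefully is bookkeeping the reference measure for $H_{\widetilde{\sL}}$ (namely $d\mu$, not $dV_g$), since the factor of $h^2$ in $d\mu$ is precisely what converts the asymmetric-looking operator identity into the symmetric kernel relation $h(z)h(z')H_{\widetilde{\sL}}$. No analytic difficulty arises beyond invoking the standard uniqueness theorem for the minimal heat semigroup on a complete (possibly boundaried) weighted manifold.
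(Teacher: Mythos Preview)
Your proof is correct and follows essentially the same approach as the paper. The only minor stylistic difference is that for the operator identity \eqref{eq:muL} you carry out a direct pointwise product-rule computation of $h^{-1}\sL(hu)$ and of $\nabla^{*,\mu}\nabla u$ separately and match them, whereas the paper works in the weak form, expanding $\langle \nabla^{*,\mu}\nabla u, v\rangle_\mu = \langle h\nabla u, h\nabla v\rangle$ via $\nabla(hu) = dh\otimes u + h\nabla u$; both arguments are equivalent and elementary. Your derivation of the heat kernel relation---verifying the heat equation, the initial condition via the measure bookkeeping $d\mu = h^2\,dV_g$, and invoking uniqueness of the minimal heat kernel---is exactly what the paper does.
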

\begin{proof}
For the first claim, let $u,v$ be two $\calC^\infty_0$ sections of $E$. Then
\begin{multline*}
\<\nabla^{*,\mu}\nabla u, v\>_{\mu}= \<\nabla u, \nabla v\>_{\mu}  = \<h\nabla u, h \nabla v\> \\
= \<\nabla (hu),\nabla (hv)\> - \<\nabla(hu),dh\tensor v\> - \< dh\tensor u, \nabla(hv)\>+ \<dh\tensor u,dh\tensor v\>.
 \end{multline*}
The first term on the right is just $\< (\frac{1}{h} \circ \nabla^*\nabla \circ h) \,  u ,v\>_{\mu}$.  The second and the fourth terms 
together give $-\< h \nabla u, dh \otimes v \>$. Finally, the third term is
\begin{multline*}
- \< dh \tensor u, \nabla(hv)\> = - \<\nabla^* (dh\tensor u), hv\> = - \<\Delta h\,  u, hv\> + \< \iota(\nabla h) \nabla u, h \otimes v\> \\
= - \<\frac{\Delta h}{h} u, v\>_{\mu} + \<h \nabla u, dh \otimes v\>
 \end{multline*}
Inserting the definition of $V$ gives $\nabla^{*,\mu}\nabla =
\frac{1}{h} \circ \nabla^*\nabla \circ h + V$, which proves \eqref{eq:muL};
\eqref{eq:muDel} is a special case. 

Finally, write 
\[
u(t,z) = \int_{Z} H_{\wtilde\sL} (t,z,z') f(z')d\mu(z'), 
\]
so that we have
\[
 \del_t u+ \wtilde \sL u = 0, \qquad u(0,z) = f(z)
\]
as an evolution equation weakly in $L^2(Z,d\mu)$.
We have already verified that $\wtilde \sL = \frac{1}{h}\circ \sL
\circ h$, hence
\[
\del_t (hu) + \sL(hu) = 0, \qquad (hu)(0,z) = h(z)f(z)
\]
weakly in $L^2(Z,dV_g)$. This gives
\begin{multline*}
(hu)(t, z) = \int_Z H_{\sL}(t,z,z') h(z')f(z')\, dV_g(z') \Longrightarrow  \\
u(t, z) = \int_Z h(z) H_{\sL}(t,z,z') h(z') f(z')\, d\mu_g(z'),
\end{multline*}
which implies \eqref{eq:muHeat}.
\end{proof}

The reasons for making these transformations emerges in the following
chain of lemmas. 
\begin{lemma}\label{lem:pmuH}
If $h$ is chosen so that $\sR - V\!\cdot\mathrm{Id} \geq 0$, then 
 \begin{equation}\label{eq:pmuH}
|H_{\wtilde\sL}(t,z,z')|   \leq |H_{\nabla^{*,\mu}\nabla}(t,z,z')|.
 \end{equation}
\end{lemma}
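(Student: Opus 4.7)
The hypothesis $Q := \sR - V\cdot\mathrm{Id}\geq 0$ pointwise as an endomorphism of $E$ means that the difference $\wtilde\sL - \nabla^{*,\mu}\nabla = Q$ is a nonnegative zeroth-order perturbation of the rough Laplacian on bundle sections. The plan is to convert this pointwise endomorphism bound into a pointwise inequality on the operator norms of the Schwartz kernels via a bundle-valued Feynman--Kac representation of the two semigroups, together with a Gr\"onwall argument that controls the multiplicative functional coming from $Q$.

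On the weighted manifold $(Z,g,d\mu)$, let $(\omega_s)_{s\ge 0}$ denote the $\mu$-Brownian motion with $\omega_0 = z$, and let $\mathcal{P}_{0,s}\colon E_z\to E_{\omega_s}$ be the stochastic parallel transport associated with $\nabla$. Let $M_s\in\mathrm{End}(E_z)$ solve the operator ODE $\dot M_s = -M_s\,\widehat Q_s$ with $M_0=\mathrm{Id}$ and $\widehat Q_s:=\mathcal{P}_{0,s}^{-1}Q(\omega_s)\mathcal{P}_{0,s}$. The bundle Feynman--Kac--It\^o formula then gives
\[
(e^{-t\wtilde\sL}\phi)(z)=\mathbb{E}^z\bigl[M_t\,\mathcal{P}_{0,t}^{-1}\phi(\omega_t)\bigr],\qquad (e^{-t\nabla^{*,\mu}\nabla}\phi)(z)=\mathbb{E}^z\bigl[\mathcal{P}_{0,t}^{-1}\phi(\omega_t)\bigr].
\]
Because $Q\geq 0$ and $\mathcal{P}$ is unitary, each $\widehat Q_s$ is nonnegative self-adjoint on $E_z$, so the computation
\[
\frac{d}{ds}(M_s^*M_s) = -2 M_s^*\,\widehat Q_s\,M_s \leq 0
\]
gives $M_s^*M_s\leq \mathrm{Id}$, i.e.\ $\|M_s\|_{\mathrm{op}}\leq 1$ almost surely for every $s$.

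Passing to Schwartz kernels via the Brownian-bridge representation and factoring out the scalar weighted heat kernel $p_t^\mu$ of $\Delta_\mu$ yields
\[
H_{\wtilde\sL}(t,z,z')=p_t^\mu(z,z')\,\mathbb{E}^{z\to z'}\!\bigl[M_t\,\mathcal{P}_{0,t}^{-1}\bigr],\quad H_{\nabla^{*,\mu}\nabla}(t,z,z')=p_t^\mu(z,z')\,\mathbb{E}^{z\to z'}\!\bigl[\mathcal{P}_{0,t}^{-1}\bigr],
\]
as $\mathrm{Hom}(E_{z'},E_z)$-valued objects, where $\mathbb{E}^{z\to z'}$ denotes expectation with respect to the $\mu$-Brownian bridge from $z$ to $z'$ of duration $t$. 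The two kernels therefore differ only by the insertion of the pathwise contraction $M_t$. Coupling both semigroups on the same path space and applying a Jensen-type operator-norm inequality to the bridge averages, one concludes the desired pointwise estimate $|H_{\wtilde\sL}(t,z,z')|_{\mathrm{op}}\leq|H_{\nabla^{*,\mu}\nabla}(t,z,z')|_{\mathrm{op}}$.

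The delicate step is this last one: because bridge averaging can redistribute cancellations differently on the two sides, pathwise contractivity of $M_t$ does not automatically pass to the operator norms of the averages. To handle this I would either justify the Jensen-type operator inequality directly, using that per-path post-composition with $M_t$ is a non-expansive map on $\mathrm{Hom}(E_{z'},E_z)$, or else fall back on an iterated Duhamel argument: the identity $H_{\wtilde\sL} = H_{\nabla^{*,\mu}\nabla} - H_{\nabla^{*,\mu}\nabla}\ast Q\ast H_{\wtilde\sL}$ (convolution in the spacetime variables) expands into a convergent Dyson series whose terms can be controlled using the quadratic-form inequality $\wtilde\sL\geq\nabla^{*,\mu}\nabla$ together with the semigroup property for $e^{-t\nabla^{*,\mu}\nabla}$. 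Everything else in the argument---the stochastic setup, the Gr\"onwall estimate, and the Feynman--Kac representations---is standard bookkeeping once the weighted-manifold framework of the section is in place.
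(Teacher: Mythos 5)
Your route is genuinely different from the paper's: the paper proves this lemma with the Trotter product formula, writing $e^{-t\wtilde\sL}=\lim_{k\to\infty}\big(e^{-t\nabla^{*,\mu}\nabla/k}\,e^{-t(\sR-V)/k}\big)^k$, using the pointwise bound $|e^{-\tau(\sR-V)}|\le 1$ on each factor and then composing the $k$ kernel factors. Your Feynman--Kac argument, however, stalls exactly at the step you flag, and that step is not merely delicate -- it cannot be justified from the hypotheses. Pathwise contractivity $\|M_t\|\le 1$ does not give $\big|\mathbb{E}^{z\to z'}[M_t\,\mathcal{P}_{0,t}^{-1}]\big|\le\big|\mathbb{E}^{z\to z'}[\mathcal{P}_{0,t}^{-1}]\big|$, because the reference bridge average can be small, or even zero, through holonomy cancellation, while inserting the path-dependent contraction $M_t$ destroys the cancellation. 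Concretely, for a flat Hermitian line bundle over a circle with holonomy $-1$, the kernel of $e^{-t\nabla^*\nabla}$ vanishes at antipodal pairs $(z,z')$ (the two homotopy classes of paths contribute with opposite signs), whereas adding a nonnegative potential $Q$ that weights the two classes asymmetrically makes the kernel at those pairs nonzero; so no argument using only $Q\ge 0$ and per-path non-expansiveness can deliver a pointwise comparison with $|H_{\nabla^{*,\mu}\nabla}|$. Your fallback does not repair this: in the Duhamel identity $H_{\wtilde\sL}=H_{\nabla^{*,\mu}\nabla}-H_{\nabla^{*,\mu}\nabla}\ast Q\ast H_{\wtilde\sL}$ the kernel $H_{\nabla^{*,\mu}\nabla}$ is not positivity-preserving, so the correction term cannot be absorbed into a pointwise norm bound, and the quadratic-form inequality $\wtilde\sL\ge\nabla^{*,\mu}\nabla$ controls $L^2$ quantities, not kernels pointwise.

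What your stochastic setup does prove cleanly is the scalar domination: taking norms inside the bridge expectation gives $|H_{\wtilde\sL}(t,z,z')|\le H_{\Delta_\mu}(t,z,z')\,\mathbb{E}^{z\to z'}[\|M_t\|]\le H_{\Delta_\mu}(t,z,z')$, which is precisely the combination of this lemma with Lemma~\ref{lem:muH}, and is the only form of the estimate used later (Theorem~\ref{thm:heatL} and beyond). So the honest fix is to aim your Feynman--Kac argument directly at $|H_{\wtilde\sL}|\le H_{\Delta_\mu}$ rather than at the intermediate comparison with $|H_{\nabla^{*,\mu}\nabla}|$; note that the paper's Trotter argument is likewise only exploited downstream after the scalar domination of Lemma~\ref{lem:muH} is brought in.
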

\begin{proof}
We apply the Trotter product formula, which asserts that if $A$ and $B$ are self-adjoint and semibounded, 
then $e^{-t(A+B)} = \lim_{k\to \infty}(e^{-tA/k} e^{-tB/k})^k$. With $A = \nabla^{*,\mu}\nabla$ and $B = \sR - V$, this gives  
\[
 e^{-t\wtilde\sL} = \lim_{k\to \infty} \big(e^{-t\nabla^{*,\mu}\nabla/k} e^{-t(\sR-V\,\mathrm{Id})/k}\big)^k.
\]
Taking the pointwise norm of the terms of the above sequence and using
$|e^{-\tau (\sR - V)}| \leq 1$ for $\tau \geq 0$, we have
\[
 |H_{\nabla^{*,\mu}\nabla}(t/k,z,z')  e^{-t(\sR-V\,\mathrm{Id})/k}| \leq |H_{\nabla^{*,\mu}\nabla}(t/k,z,z')|.
\]
Composing this $k$ times gives the result.
\end{proof}
The next lemma adapts a technique of  Donnelly and Li \cite[Lemma 4.1]{Donnelly1982}. 
\begin{lemma}\label{lem:muH}
The pointwise norm of the heat kernel for $\nabla^{*,\mu}\nabla$ is dominated by the heat kernel of $\Delta_\mu$, 
\begin{equation}\label{eq:muH}
|H_{\nabla^{*,\mu}\nabla}(t,z,z')| \leq H_{\Delta_{\mu}}(t,z,z').
 \end{equation}
\end{lemma}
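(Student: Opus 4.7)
The plan is to prove this via a Kato-type differential inequality combined with the parabolic maximum (comparison) principle, following the Donnelly--Li strategy referenced just before the lemma. The key point is that the weighted connection Laplacian $\nabla^{*,\mu}\nabla$ dominates the weighted scalar Laplacian $\Delta_\mu$ when applied to $|u|$, so that for any solution $u$ of $\partial_t u + \nabla^{*,\mu}\nabla u = 0$, the scalar function $|u|$ is a subsolution of the corresponding scalar heat equation.

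The first step is to establish the pointwise inequality
\[
\Delta_\mu |u| \leq \mathrm{Re}\,\bigl\langle \nabla^{*,\mu}\nabla u,\, u/|u|\bigr\rangle
\]
at points where $u\neq 0$. I would derive this by computing $\Delta_\mu |u|^2$ in two ways: from the product rule one has $\Delta_\mu |u|^2 = 2|u|\Delta_\mu|u| - 2|\nabla|u||^2$, while direct computation (using $\nabla^{*,\mu} = h^{-2}\nabla^* h^2$) gives $\Delta_\mu |u|^2 = 2\,\mathrm{Re}\,\langle \nabla^{*,\mu}\nabla u, u\rangle - 2|\nabla u|^2$. The refined Kato inequality $|\nabla|u||^2 \leq |\nabla u|^2$, which follows from Cauchy--Schwarz applied to $|u|\,\nabla|u| = \mathrm{Re}\,\langle \nabla u, u\rangle$, then yields the claim. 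To handle zeros of $u$, I would apply the same computation to the smoothed quantity $|u|_\varepsilon := \sqrt{|u|^2 + \varepsilon^2} - \varepsilon$, obtain the inequality $(\partial_t + \Delta_\mu)|u|_\varepsilon \leq 0$ in the strong sense, and pass to the limit $\varepsilon\to 0$ in the distributional/weak sense.

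Given a compactly supported smooth section $f$ of $E$, set $u(t,z) = \int H_{\nabla^{*,\mu}\nabla}(t,z,z')\,f(z')\,d\mu(z')$ and $v(t,z) = \int H_{\Delta_\mu}(t,z,z')\,|f(z')|\,d\mu(z')$. Then $\partial_t u + \nabla^{*,\mu}\nabla u = 0$ implies, via the Kato inequality and $\partial_t |u| = \mathrm{Re}\,\langle \partial_t u, u/|u|\rangle$, that
\[
(\partial_t + \Delta_\mu)|u| \leq 0,
\]
while $v$ solves $(\partial_t + \Delta_\mu)v = 0$ with $v(0,\cdot) = |f| \geq |u(0,\cdot)|$. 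The difference $w := v - |u|$ is thus a weak supersolution of the scalar heat equation with nonnegative initial data. Applying the parabolic maximum principle (using that $H_{\Delta_\mu}$ is the minimal positive fundamental solution, so the comparison is valid on the complete weighted manifold $(Z,g,d\mu)$) yields $w \geq 0$, i.e.\ $|u(t,z)| \leq v(t,z)$ for all $t>0$ and $z\in Z$. Finally, choosing an approximate identity $f_n$ concentrating at an arbitrary point $z'$ and passing to the limit produces the pointwise kernel estimate \eqref{eq:muH}.

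The main technical obstacle is the rigorous treatment of Kato's inequality at the zero set of $u$, which is overcome by the standard $\varepsilon$-regularization described above. The secondary subtlety is invoking the parabolic comparison principle on the noncompact (and possibly singular-at-infinity) space $Z$; this is legitimate precisely because we work with the minimal (Friedrichs) heat kernels, so $v - |u|$ arises as a limit of solutions on compact exhaustions with vanishing boundary data, to which the classical maximum principle applies.
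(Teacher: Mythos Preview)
Your first step---the Kato inequality with $\varepsilon$-regularization---is exactly what the paper does (the paper writes $K_\varepsilon = (|H|^2+\varepsilon^2)^{1/2}$ rather than subtracting $\varepsilon$, but this is immaterial). The divergence is in the second step. You pass from the subsolution inequality $(\partial_t + \Delta_\mu)|u|\le 0$ to the kernel bound by applying the parabolic maximum principle to $v-|u|$ on the noncompact manifold and then concentrating the initial data at a point. The paper instead works directly with the kernels via a Duhamel-type identity: setting
\[
f(s,z,z') = \int_Z H_{\Delta_\mu}(t-s,z,w)\,|H_{\nabla^{*,\mu}\nabla}(s,w,z')|\,d\mu(w),
\]
one has $f(t)-f(0) = |H_{\nabla^{*,\mu}\nabla}(t,z,z')| - H_{\Delta_\mu}(t,z,z')$, while computing $\int_0^t \partial_s f\,ds$ by differentiating under the integral and integrating $\Delta_\mu$ by parts reduces it to $\int_0^t\!\int_Z H_{\Delta_\mu}\cdot(\partial_s+\Delta_\mu)|H_{\nabla^{*,\mu}\nabla}|\,d\mu\,ds \le 0$, using only positivity of $H_{\Delta_\mu}$ and the Kato inequality. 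This Duhamel route is a bit cleaner: it avoids invoking a maximum principle on a noncompact space, the exhaustion argument, and the approximate-identity limit you need. Your approach is correct and standard, but carries slightly more technical overhead at exactly the point you flagged as the ``secondary subtlety.''
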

\begin{proof}
We first claim that
 \begin{equation}\label{eq:heat_i}
\left(\frac{\partial}{\partial t} + \Delta_{\mu}\right) | H_{\nabla^{*,\mu}\nabla}| \leq 0.
 \end{equation}
To prove this, define $K_{\e} = (|H_{\nabla^{*,\mu}\nabla}|^2 + \e^2)^{1/2}$. Then for $\e > 0$, 
\begin{multline*}
 K_{\e}\, \del_t K_\e= \frac{1}{2} \del_t K_{\e}^2= \frac{1}{2}
 \del_t (| H_{\nabla^{*,\mu}\nabla}|^2 +\epsilon^2) \\
= \< \del_t  H_{\nabla^{*,\mu}\nabla}, H_{\nabla^{*,\mu}\nabla}\> =  
\< -\nabla^{*,\mu}\nabla H_{\nabla^{*,\mu}\nabla},  H_{\nabla^{*,\mu}\nabla}\>;
\end{multline*}
similarly, 
\begin{multline*}
K_{\e}\, \Delta_{\mu} K_{\e} = \frac{1}{2}\Delta_{\mu} (K_{\e}^2) + |\nabla K_{\e}|_{\mu}^2= 
\frac{1}{2} \Delta_{\mu} | H_{\nabla^*\nabla}|^2 +|\nabla K_{\e}|_{\mu}^2\\
=\< \nabla^{*,\mu}\nabla  H_{\nabla^{*,\mu}\nabla},  H_{\nabla^{*,\mu}\nabla}\> -
|\nabla H_{\nabla^{*,\mu}\nabla}|^2+|\nabla K_{\e}|_{\mu}^2,
\end{multline*}
where we have computed using that $\Delta_\mu = h^{-2} \nabla^* h^2 \nabla$. 
Adding these two expressions together gives
\[
K_{\e} (\del_t + \Delta_{\mu}) K_{\e}=-|\nabla  H_{\nabla^{*,\mu}\nabla}|^2 + |\nabla K_{\e}|^2.
\]
However, 
\[
|\nabla K_\e| = \frac{ \big|\nabla | H_{\nabla^{*,\mu}\nabla}|^2\big|}{2\sqrt{|H_{\nabla^{*,\mu}\nabla}|^2 + \e^2}}
\leq |\nabla H_{\nabla^{*,\mu}\nabla}| \frac{|H_{\nabla^{*,\mu}\nabla}|}{\sqrt{|H_{\nabla^{*,\mu}\nabla}|^2 + \e^2}} \leq |\nabla H_{\nabla^{*,\mu}\nabla}|,
\]
which gives the desired differential inequality for $K_\e$.
Finally, letting $\e \to 0$ gives the same inequality for $|H_{\nabla^{*,\mu}\nabla}|$. 

For the second step of the proof, fix $t>0$ and define
\[
 f(s,z,z') = \int_Z H_{\Delta_{\mu}}(t-s,z,w)\, |H_{\nabla^{*,\mu}\nabla}(s,w,z')| d\mu(w),
\]
so that
\[
 \int_{0}^t \del_s f(s,z,z') \, ds = | H_{\nabla^{*,\mu}\nabla}(t,z,z')|-  H_{\Delta_{\mu}}(t,z,z').
\]
On the other hand,
\[
\begin{array}{rl}
 \int_{0}^t \del_s f(s,z,z') \, ds 
&= \int_0^t\del_s \left( \int_Z
  H_{\Delta_{\mu}}(t-s,z,w)\,|H_{\nabla^{*,\mu}\nabla}(s,w,z')| \, d\mu(w)\right)\, ds\\[1mm]
&= \int_{0}^t \int_Z \Delta_{\mu} (H_{\Delta_{\mu}}(t-s,z,w))\,|H_{\nabla^{*,\mu}\nabla}(s,w,z')|\, d\mu(w)ds\\[1mm]
& \qquad 
 + \int_{0}^t \int_Z  H_{\Delta_{\mu}}(t-s,z,w)\,\del_s \left(| H_{\nabla^{*,\mu}\nabla}(s,w,z')|\right)\, d\mu(w) ds\\[1mm]
& = \int_0^t \int_Z  H_{\Delta_{\mu}}(t-s, z,w)\,(\del_s + \Delta_{\mu})\left(| H_{\nabla^{*,\mu}\nabla}(s,w,z')|\right)\,d\mu(w) ds,
\end{array}
\]
which is nonpositive by the first part of the proof and since $H_{\Delta_\mu} \geq 0$. 
\end{proof}

We have now verified the main chain of estimates
\begin{align*}
|H_{\sL}(t,z,z') |& = h(z)h(z')\ |H_{\widetilde \sL}(t,z, z')|\\[1mm]
& = h(z)h(z')\ |H_{\nabla^{*,\mu}\nabla + \sR - V}(t,z, z')|\\[1mm]
& \leq h(z)h(z')\ |H_{\nabla^{*,\mu}\nabla}(t,z, z')| \\[1mm]
& \leq h(z)h(z')\ H_{\Delta_{\mu}}(t,z,z') = H_{\Delta + V}(t,z,z'),
\end{align*}
which proves the 
\begin{theorem}\label{thm:heatL}
With all notation as above, 
let $h$ be a smooth strictly positive function, define $V = - (\Delta h)/h$, and suppose that 
\[
\sR - V\!\cdot\mathrm{Id} \geq 0.
\]
Then 
\begin{equation}\label{eq:hsV}
|H_{\sL}(t,z,z')|\leq H_{\Delta + V}(t,z,z') = h(z)h(z')\ H_{\Delta_{\mu}}(t,z,z'), 
 \end{equation}
and hence
\begin{equation}\label{eq:hsV2}
|G_{\sL}(z,z')|\leq G_{\Delta + V}(z,z') = h(z)h(z')\ G_{\Delta_{\mu}}(z,z'). 
\end{equation}
\end{theorem}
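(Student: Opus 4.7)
The plan is to simply chain together the three preceding lemmas (Lemma~\ref{lem:muL}, \ref{lem:pmuH}, and~\ref{lem:muH}) and then pass from the heat kernel bound to the Green function bound via the Laplace transform identity \eqref{Laplace}. Each link in the chain is already a stated lemma, so the work is almost purely organizational; the one substantive place where the hypothesis is used is at the step where positivity of $\sR - V\!\cdot\mathrm{Id}$ enters.

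More concretely, I would proceed as follows. First, apply the conjugation identity \eqref{eq:muHeat} from Lemma~\ref{lem:muL} to write
\[
|H_{\sL}(t,z,z')| = h(z)h(z')\,|H_{\wtilde\sL}(t,z,z')|,
\]
where $\wtilde\sL = \nabla^{*,\mu}\nabla + \sR - V\!\cdot\mathrm{Id}$. Second, invoke the hypothesis $\sR - V\!\cdot\mathrm{Id} \geq 0$, which is precisely what is needed to apply Lemma~\ref{lem:pmuH} via the Trotter product formula; this yields
\[
|H_{\wtilde\sL}(t,z,z')| \leq |H_{\nabla^{*,\mu}\nabla}(t,z,z')|.
\]
Third, apply Lemma~\ref{lem:muH}, the Donnelly-Li style domination, to bound the pointwise norm of the bundle-valued kernel by the positive scalar kernel:
\[
|H_{\nabla^{*,\mu}\nabla}(t,z,z')| \leq H_{\Delta_\mu}(t,z,z').
\]
Finally, to recognize the right-hand side as the scalar kernel of $\Delta + V$, apply the special case \eqref{eq:muDel} of Lemma~\ref{lem:muL} in reverse: since $\Delta_\mu = h^{-1}\circ(\Delta+V)\circ h\cdot$ (interpreting the zeroth-order correction correctly), the same argument that produced \eqref{eq:muHeat} gives
\[
H_{\Delta+V}(t,z,z') = h(z)h(z')\,H_{\Delta_\mu}(t,z,z').
\]
Assembling these four steps gives \eqref{eq:hsV}.

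For the Green function estimate \eqref{eq:hsV2}, I would integrate \eqref{eq:hsV} in $t$ from $0$ to $\infty$ using \eqref{Laplace}. Since the triangle inequality gives
\[
|G_{\sL}(z,z')| \leq \int_0^\infty |H_{\sL}(t,z,z')|\,dt,
\]
the pointwise bound on the integrand yields the corresponding bound on the Green functions provided the integral on the right converges. The statement of the theorem is really a pointwise comparison, so one does not need to prove convergence here; this is deferred to the subsequent sections where the large-$t$ decay of $H_{\Delta_\mu}$ is established using the Grigor'yan-Saloff-Coste machinery.

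The only point requiring any care is making sure the Trotter product formula in the proof of Lemma~\ref{lem:pmuH} applies to $\wtilde\sL$ in the present form — both $\nabla^{*,\mu}\nabla$ and $\sR - V\!\cdot\mathrm{Id}$ must be self-adjoint and semibounded on $L^2(Z,d\mu)$. The first is a nonnegative self-adjoint operator (the Friedrichs extension), and the second is bounded below by $0$ by the standing hypothesis, so this causes no trouble. Overall, there is no genuine obstacle: the theorem is the clean packaging of the three preceding lemmas together with the Laplace transform.
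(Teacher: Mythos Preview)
Your proposal is correct and matches the paper's own proof essentially line for line: the paper simply displays the chain $|H_{\sL}| = h(z)h(z')|H_{\wtilde\sL}| \leq h(z)h(z')|H_{\nabla^{*,\mu}\nabla}| \leq h(z)h(z')H_{\Delta_\mu} = H_{\Delta+V}$, citing Lemmas~\ref{lem:muL}, \ref{lem:pmuH}, and~\ref{lem:muH} in turn. Your additional remarks on the Green function integration and the semiboundedness hypotheses for Trotter are apt but go slightly beyond what the paper bothers to record.
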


\subsection{The Grigor'yan and Saloff-Coste bounds} 
We now recall the geometric and analytic properties of the complete
weighted Riemannian manifold $(Z,g,d\mu)$ used
to obtain heat kernel bounds for the scalar operator $\Delta_{\mu}$ by
the method of Grigor'yan and Saloff-Coste\footnote{In their work,
  Grigor'yan and Saloff-Coste use the Laplace-Beltrami operator $\mathrm{div}
  \circ \mathrm{grad}$. Here we use the positive Laplacian, $\Delta u =
  \mathrm{div}(\mathrm{grad}(u))$, and adapt all their statements to
  our setting.}.  
\begin{definition}
The complete weighted Riemannian manifold $(Z,g,d\mu)$ satisfies:
\begin{itemize}
\item[ $(VD)_\mu$] the weighted {\em volume doubling property} if there
  exists a constant $C_D >0$ so that
\begin{equation}\label{eq:vd}
\mu(B(p,2r)) \leq C_D \, \mu(B(p,r))
\end{equation}
for all $p \in Z$ and all radii $r>0$;
\item[$(PI)_{\mu,\delta}$] the uniform weighted {\em Poincar\'e inequality}
with parameter $\delta \in (0,1]$, if there exists a constant $C_P >0$
so that 
\begin{equation}\label{eq:pi}
\int_{B(p,r)} (f- \bar{f})^2 \, d\mu \leq C_P \, r^{2} \int_{B(p,\delta^{-1} r)} |\nabla f|^2\, d\mu
\end{equation}
for every $f\in W^{1,2}_{loc}(M)$ and all $p\in M$ and radii $r>0$; here $\bar{f}$ is the average
$\frac{1}{\mu(B(p,r))}\int_{B(p,r)} f \,d\mu$.
\item[$(PI)_{\mu}$] the uniform weighted Poincar\'e inequality
  $(PI)_{\mu}$, if above we can choose $\delta =1$.
\end{itemize}
\end{definition}
\begin{remark}\label{rem:jerison}
An intricate covering argument due to Jerison~\cite{Jerison1986} shows that if $(Z,g,d\mu)$ satisfies 
$(VD)_{\mu}$ and $(PI)_{\mu, \delta}$ for some $\delta \in (0,1)$, then it satisfies $(PI)_{\mu}$, see also \cite[Ch. 5.3]{SaloffCoste2002}.
\end{remark}

The verification that a QAC space $(Z,g, d\mu)$ with an appropriately chosen measure satisfies $(VD)_\mu$ and $(PI)_\mu$ 
is not easy and occupies much of \S 4. Let us grant these for the time being and state the heat kernel bounds 
that follow from them. This was done by Grigor'yan and Saloff-Coste,
and we choose to apply them in the form stated in~\cite[Theorem
2.7]{Grigoryan2005} (see the history for this type of results therein).

\begin{theorem}\label{thm:Hmuest}
Let $(Z,g, d\mu)$ be a weighted QAC manifold satisfying $(VD)_\mu$ and $(PI)_{\mu}$. Then\footnote{Recall
  that the notation that $f_1 \asymp f_2$ means that there
  exist two constants $c, C>0$ so that $c f_2 \leq f_1 \leq C f_2$;
  see Table~\ref{table} on page~\pageref{table} for notational conventions.}
\begin{equation}\label{eq:Hmuest}
H_{\Delta_\mu} (t,z,z')  \asymp \left(\mu(B(z,\sqrt t))\mu(B(z',\sqrt t))\right)^{-\frac12} e^{-c d(z,z')^2/t}
\end{equation}
for all $(t,z,z') \in (0,+\infty) \times Z \times Z$. By the results
in Theorem~\ref{thm:heatL}, this implies that 
\begin{equation}
  H_{\Delta +V}(t,z,z') \asymp  h(z) h(z') \left(\mu(B(z,\sqrt{t})) \mu(B(z',\sqrt{t}))\right)^{-1/2} e^{- c d(z,z')^2/t},
\label{heatbounds1}
\end{equation}
where $V = -\Delta h/h$. 
\end{theorem}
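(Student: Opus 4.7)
The plan is to cite the bounds of Grigor'yan and Saloff-Coste for $H_{\Delta_\mu}$ and then transport them to $H_{\Delta+V}$ by the Doob-type transform established in Theorem~\ref{thm:heatL}. The real content of the theorem lies in hypotheses $(VD)_\mu$ and $(PI)_\mu$, which are treated elsewhere in the paper; here we only need to verify that we are in a position to apply the abstract result and that the conversion via $h$ is consistent.

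First, I would invoke \cite[Thm.~2.7]{Grigoryan2005}, which states that for a complete weighted Riemannian manifold $(Z,g,d\mu)$, the conjunction of $(VD)_\mu$ and $(PI)_\mu$ is equivalent to two-sided Gaussian bounds of the form
\[
  H_{\Delta_\mu}(t,z,z') \asymp \frac{1}{\sqrt{\mu(B(z,\sqrt t))\,\mu(B(z',\sqrt t))}}\, e^{-c d(z,z')^2/t}
\]
for all $(t,z,z')\in (0,\infty)\times Z\times Z$, with constants depending only on $C_D$ and $C_P$ (and with possibly different constants $c$ in the upper and lower bounds). Since by assumption our weighted QAC manifold satisfies both $(VD)_\mu$ and $(PI)_\mu$, and since the hypothesis of completeness is automatic (QAC spaces are complete by construction, with any compact boundary handled by taking the Friedrichs extension as in the beginning of the section), the Gaussian bound for $H_{\Delta_\mu}$ follows directly. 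This gives the first displayed estimate.

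For the second displayed estimate, I would apply the last identity in Theorem~\ref{thm:heatL}, namely
\[
  H_{\Delta+V}(t,z,z') = h(z)\,h(z')\,H_{\Delta_\mu}(t,z,z'),
\]
which is just the specialization of the intertwining relation \eqref{eq:muHeat} to the scalar operators $\Delta+V$ and $\Delta_\mu$ (noting $V = -\Delta h/h$). Multiplying the two-sided bound on $H_{\Delta_\mu}$ by $h(z)h(z')$ yields the claimed bound on $H_{\Delta+V}$.

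The main obstacle is essentially hidden: verifying $(VD)_\mu$ and $(PI)_\mu$ on QAC spaces is the real technical heart of the paper, and is carried out in \S 4 via the recursive structure of QAC geometry. Granting that, the present theorem is formal. The only minor subtlety to flag in the write-up is that the two constants $c$ appearing in the upper and lower Gaussian factors in \cite{Grigoryan2005} are generally different; this is harmless under the $\asymp$ convention of Table~\ref{table}, but should be acknowledged so that the reader does not interpret the single symbol $c$ as the same constant on both sides.
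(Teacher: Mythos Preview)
Your proposal is correct and matches the paper's approach exactly: the paper does not give a proof of this theorem but simply cites \cite[Theorem~2.7]{Grigoryan2005} for the bound on $H_{\Delta_\mu}$ and then invokes Theorem~\ref{thm:heatL} (specifically \eqref{eq:muHeat}) to transfer it to $H_{\Delta+V}$. Your remark about the two different constants $c$ in the upper and lower Gaussian bounds is a sensible clarification that the paper itself does not make explicit.
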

We translate this back to Green function estimates for $\Delta +
V$ using \eqref{Laplace}.
\begin{theorem}\label{thm:Gmuest}
If $(Z,g,d\mu)$ satisfies $(VD)_{\mu}$ and $(PI)_{\mu}$, then 
\[
  G_{\Delta + V} (z,z') \asymp h(z) h(z') \int_{d(z,z')}^{+\infty} s \left(\mu(B(z,s)) \mu(B(z',s))\right)^{-1/2}\, ds.
\]
\end{theorem}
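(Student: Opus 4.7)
The plan is to start from the Laplace transform identity $G_{\Delta+V}(z,z') = \int_0^\infty H_{\Delta+V}(t,z,z')\,dt$, substitute the two-sided heat kernel estimate of Theorem~\ref{thm:Hmuest}, and then reduce the resulting Gaussian-weighted $t$-integral to the stated $s$-integral by means of the substitution $s=\sqrt{t}$, $dt = 2s\,ds$. Writing $d=d(z,z')$ and $\Phi(s) = s\bigl(\mu(B(z,s))\mu(B(z',s))\bigr)^{-1/2}$, this reduces the theorem to the claim
\[
\int_0^\infty \Phi(s)\, e^{-cd^2/s^2}\,ds \;\asymp\; \int_{d}^\infty \Phi(s)\,ds,
\]
after which multiplying by $h(z)h(z')$ and absorbing harmless constants into the $\asymp$ gives the result. (Note the constant $c$ on the two sides of \eqref{eq:Hmuest} is different, but this is immaterial for the final $\asymp$ bound.)

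The lower bound on the tail is immediate: restrict to $s \geq d$, where $e^{-cd^2/s^2} \geq e^{-c}$, giving
\[
\int_0^\infty \Phi(s) e^{-cd^2/s^2}\,ds \;\geq\; e^{-c}\int_{d}^\infty \Phi(s)\,ds.
\]
For the upper bound, split $\int_0^\infty = \int_0^d + \int_d^\infty$. The outer piece is trivially $\leq \int_d^\infty \Phi(s)\,ds$. The main point is therefore to show
\[
\int_0^d \Phi(s)\, e^{-cd^2/s^2}\,ds \;\lesssim\; \int_d^\infty \Phi(s)\,ds.
\]
For the right-hand side one uses $(VD)_\mu$ in the form $\mu(B(z,s))\asymp \mu(B(z,d))$ for $s\in[d,2d]$ (and similarly for $z'$), giving
\[
\int_d^\infty \Phi(s)\,ds \;\gtrsim\; d^{2}\bigl(\mu(B(z,d))\mu(B(z',d))\bigr)^{-1/2}.
\]

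The hard (but standard) step is the dyadic estimate on $\int_0^d$. From $(VD)_\mu$ one extracts a polynomial reverse volume comparison: there exist $N, C>0$ such that $\mu(B(z,d))/\mu(B(z,s)) \leq C(d/s)^{N}$ for all $0<s\leq d$. Partition $[0,d]$ into shells $I_k = [2^{-k-1}d,\, 2^{-k}d]$, $k\geq 0$. On $I_k$ we have $e^{-cd^2/s^2} \leq e^{-c\,4^{k}}$ and
\[
\Phi(s) \leq 2^{-k}d \cdot C\,2^{kN}\bigl(\mu(B(z,d))\mu(B(z',d))\bigr)^{-1/2},
\]
whence
\[
\int_{I_k} \Phi(s)e^{-cd^2/s^2}\,ds \;\leq\; C'\,4^{-k}\,2^{kN}\,d^{2}\bigl(\mu(B(z,d))\mu(B(z',d))\bigr)^{-1/2}\,e^{-c\,4^{k}}.
\]
The Gaussian factor $e^{-c\,4^k}$ defeats any fixed polynomial in $2^k$, so summing over $k\geq 0$ yields a convergent geometric-type series, bounded by a constant multiple of $d^{2}(\mu(B(z,d))\mu(B(z',d)))^{-1/2}$, which by the previous paragraph is controlled by $\int_d^\infty \Phi(s)\,ds$. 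Combining the two directions finishes the proof.

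The main obstacle is precisely this last step: controlling the short-time ($s\ll d$) tail, where $\Phi(s)$ can blow up as a negative power of $s$; it is only the interplay between the polynomial volume growth permitted by $(VD)_\mu$ and the Gaussian factor coming from the heat kernel bound that makes the integral absolutely convergent and comparable to the long-distance tail.
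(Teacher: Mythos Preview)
Your argument is correct and complete. The paper's proof differs in how it handles the short-time piece: rather than a dyadic decomposition, it follows \cite{Li1986} and applies the inversion $\tau = d(z,z')^4/t$ to transform $\int_0^{d^2} H\,dt$ into an integral over $\tau \geq d^2$, then uses $(VD)_\mu$ in the form $\mu(B(z,\sqrt{\tau})) \leq (\tau/d^2)^\gamma \mu(B(z,d^2/\sqrt{\tau}))$ together with the bound $e^{-c\lambda}\lambda^{\gamma-2} \leq C$ for $\lambda \geq 1$ to control the resulting integrand by the large-time expression. The paper in fact proves the stronger statement that the short-time and long-time pieces are \emph{each} comparable to the final integral, whereas you only show the short-time piece is dominated by it, which is all that is needed. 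Your dyadic argument is arguably more transparent, since it makes explicit the mechanism---polynomial volume growth defeated by Gaussian decay---without the change of variables; the inversion trick is more concise once one knows it, and has the minor advantage of reusing the same integral representation for both bounds.
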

\begin{proof}
Following \cite[p.192]{Li1986}, insert \eqref{heatbounds1} into the
integral in~\eqref{Laplace} defining $G_{\Delta+V}$ in terms of $H_{\Delta+V}$, 
and decompose this integral as 
\[
G_{\Delta+V}(z,z') = \int_{0}^{d(z,z')^2} H_{\Delta + V}(t,z,z') \, dt + \int_{d(z,z')^2}^{+\infty} H_{\Delta + V}(t,z,z') \, dt. 
\]
When $t\geq d(z,z')^2$, we have $e^{-c d(z,z')^2/t} \asymp 1$, so the second sumand is comparable to 
\begin{equation}
h(z) h(z')\int_{d(z,z')^2}^{+\infty} \left(\mu(B(z,\sqrt t))\mu(B(z',\sqrt t))\right)^{-1/2} \, dt. 
\label{heatbounds2}
\end{equation}

We claim that the first summand satisfies the same estimate.  Indeed, use \eqref{heatbounds1} again and change 
the variable of integration to $\tau = d(z,z')^4/t$, so $\tau \geq d(z,z')^2$ and $d(z,z')^2/t = \tau/d(z,z')^2$. 
This term is then comparable to 
\begin{multline*}
h(z) h(z')\int_{d(z,z')^2}^{+\infty}
\left(\mu(B(z, d(z,z')^2/\sqrt{\tau}) \mu(B(z',d(z,z')^2/ \sqrt{\tau} ) \right)^{-1/2}\\
\times e^{-c \frac{\tau}{d(z,z')^2}}\,  \left(\frac{d(z,z')^4}{\tau^2}\right)\, d\tau.
\end{multline*}
\noindent
Using $(VD)_{\mu}$ and $\sqrt{\tau} = (d(z,z')^2/\sqrt{\tau}) (\tau/d(z,z'))^2$, we obtain
\[
\mu(B(z,\sqrt{\tau})) \leq  \left(\frac{\tau}{d(z,z')^2}\right)^\gamma   \mu(B(z, d(z,z')^2/ \sqrt{\tau }  ) )
\]
for some $\gamma > 1$ which is independent of $z,z'$ and $\tau$.  Observing also that $e^{-c\lambda} 
\lambda^{\gamma-2} \leq C $ when $\lambda = \tau/d(z,z')^2 \geq 1$, this integral is bounded by 
\eqref{heatbounds2}, as claimed.    On the other hand, 
\[
\mu(B(z, d(z,z')^2/ \sqrt{\tau }  ) ) \leq \mu(B(z,\sqrt{\tau}))
\]
so an even simpler argument gives the bound in the other direction. 

Finally, setting $t = s^2$ in \eqref{heatbounds2} yields the expression in the statement of the theorem.
\end{proof}

\section{Geometric estimates}
Fix $(Z, g) \in \sQ_k$ and for $a\in \RR$ and $b \in \RR^k$ consider
the measure $d\mu_{a,b}=\rho^a w^b dV_g$ on it. Our goal in this section is to
determine when $d\mu_{a,b}$ satisfies the volume doubling and Poincar\'e inequalities,
which are the hypotheses of Theorem~\ref{thm:Hmuest} and
Theorem~\ref{thm:Gmuest}.

To simplify notation, we often omit the superscript $(k)$ on $Z$, $g$ and the functions $w$. We also
continue to assume that there is a unique maximal chain of strata  $S_k < S_{k-1} < \ldots < S_1$ 
in the degenerate limit $Y_0$ of the cross-section $Y$ of $Z$; the extension to the general case is straightforward.
Let $Z^{(j-1)} \in \sQ_{j-1}$ be the QAC space used to desingularize the cone bundle along the stratum $S_{j}$, 
and write $m_{j-1} = \dim Z^{(j-1)}$ and $n = \dim Z$.  We refer to Table~\ref{table} on page~\pageref{table} for
further notation. 

\begin{theorem}\label{thm:main4}
The weighted space $(Z,g,d\mu_{a,b}) \in \sQ_k$ satisfies $(VD)_{\mu}$
and $(PI)_{\mu}$ provided the parameters $a\in \RR$ and $b \in \RR^k$ satisfy
\begin{equation}\label{eq:cVC}
 a+n \geq 0 \quad  \text{and} \quad |b(j)| + m_{j-1} \geq 0 \quad \text{for all $1\leq j \leq k$.}
\end{equation}
\end{theorem}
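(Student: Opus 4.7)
The strategy is induction on the depth $k$, mirroring the recursive construction of QAC spaces.

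\textbf{Base case $k=0$.} Here $(Z,g)\in\sQ_0$ is asymptotically conic, no $w$-factors appear and $d\mu = \rho^{a}\,dV_g$. On the conic end $dV_g \asymp \rho^{n-1}\,d\rho\,dA_h$, so $d\mu \asymp \rho^{a+n-1}\,d\rho\,dA_h$. Under the hypothesis $a+n\geq 0$ the radial factor $\rho^{a+n-1}\,d\rho$ is doubling on $[1,\infty)$ (becoming $d\rho/\rho$ in the boundary case $a+n=0$, which is still doubling with only logarithmic growth). Together with the compactness of the cross-section and the slow variation of $\rho$ on small geodesic balls, this gives $(VD)_{\mu}$ on the end; the interior $K_Z$ is compact and contributes trivially. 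A standard scaling/annular argument combining the classical Poincar\'e inequality on the compact cross-section with this radial structure gives $(PI)_{\mu}$.

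\textbf{Inductive step.} Assume the result for all depths $<k$, and let $(Z^{(k)},g^{(k)})\in\sQ_k$ with measure $d\mu_{a,b}$. I would analyze $\mu$-volumes of balls and Poincar\'e constants using the thickened decomposition $Z^{(k)} = \bigcup_{j=0}^{k} Z^{(k)}_{(j)}(\eta)$ of \S\ref{sec:decomp-t}. On $Z^{(k)}_{(0)}(\eta)$ one has $d\mu_{a,b}\asymp \rho_k^{a}\,dV_g$ by \eqref{eq:wf0}, reducing to the base case. On $Z^{(k)}_{(j)}(\eta)$ with $j\geq 1$, Lemma~\ref{lem:m} provides a local quasi-isometry to the product metric $d\rho_k^2 + \rho_k^2 \kappa_{S_j} + g^{(j-1)}$ on $C(S_j)\times Z^{(j-1)}$, and \eqref{eq:wfj} splits the weight as
\[
d\mu_{a,b}\ \asymp\ \bigl(\rho_k^{a-|b(j)|}\,dV_{C(S_j)}\bigr)\otimes\bigl(\rho_{j-1}^{|b(j)|}\,(w^{(j-1)})^{b(j-1)}\,dV_{g^{(j-1)}}\bigr).
\]
The induction hypothesis applies to the second factor, a weighted measure on the depth-$(j-1)$ QAC space $Z^{(j-1)}$ with parameters $(a',b') = (|b(j)|,\,b(j-1))$: indeed $a'+m_{j-1} = |b(j)|+m_{j-1}\geq 0$ and $|b'(\ell)|+m_{\ell-1} = |b(\ell)|+m_{\ell-1}\geq 0$ for $1\leq\ell\leq j-1$, matching the hypotheses of the theorem one level down. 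The first factor is a radial weight on a cone over the compact stratum $S_j$ with $\rho_k$ bounded below (one never approaches the cone tip, since on $Z^{(k)}_{(j)}(\eta)$ one has $\rho_k\geq \rho_{j-1}\geq 1$), for which the base-case argument yields $(VD)$ and $(PI)$ for any value of the exponent $a-|b(j)|$, with constants depending only on that exponent, on $\dim S_j$, and on the intrinsic geometry of $S_j$. Since the product of two weighted spaces, each satisfying $(VD)$ and $(PI)$, itself satisfies these properties with constants depending only on the factor constants, we obtain the required estimates locally on each $Z^{(k)}_{(j)}(\eta)$.

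\textbf{Global estimates.} The remaining and main obstacle is to upgrade these local estimates, which hold piece by piece, to the global $(VD)_{\mu}$ and $(PI)_{\mu}$ on $Z^{(k)}$. Two features need attention. First, a geodesic ball may straddle several pieces of the decomposition: using the nesting $Z^{(k)}_{(j)} \subset Z^{(k)}_{(j)}(\eta) \subset \bigsqcup_{\ell\geq j} Z^{(k)}_{(\ell)}$ combined with the slow variation of $\rho_k$ and of the $w_i^{(k)}$ on small balls (Lemma~\ref{lem:weight-const}), $\mu(B(p,2r))$ can be bounded by a uniformly bounded number of $\mu$-volumes of balls each contained in a single piece. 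Second, the Poincar\'e constants need to be uniform in the center and radius of the ball, in particular as $\rho_k\to\infty$: here I would first obtain $(PI)_{\mu,\delta}$ for some $\delta\in(0,1)$ on each piece, then invoke Jerison's covering argument (Remark~\ref{rem:jerison}) to conclude $(PI)_{\mu}$ globally. A Whitney-type covering of $Z^{(k)}$ adapted to the level sets of the weight functions $w^{(k)}_j$ and a subordinate partition of unity make both patching arguments precise and preserve the uniformity of constants.
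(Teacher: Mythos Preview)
Your inductive setup and the local analysis on each piece $Z^{(k)}_{(j)}(\eta)$ are essentially what the paper does: remote balls lie in a single thickened piece (Lemma~\ref{lem:rb}), where the product structure and the induction hypothesis on $Z^{(j-1)}$ with parameters $(|b(j)|,\,b(j-1))$ give the estimates. However, your ``Global estimates'' paragraph is where the argument breaks down.

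The gap is the treatment of non-remote balls, i.e.\ balls with $r\geq c\rho_k(p)$. For such balls Lemma~\ref{lem:weight-const} is useless (the weight functions are \emph{not} slowly varying on scale $r$), and $B(p,2r)$ can contain arbitrarily many pieces of the decomposition; there is no ``uniformly bounded number'' of balls in single pieces covering it. Likewise, Poincar\'e inequalities do not patch under partitions of unity: the averages over overlapping balls are different, and a Whitney cover does not control $\int |f-\bar f|^2$ on a large ball in terms of gradient integrals on the small ones without some chaining mechanism that requires quantitative input you have not supplied. Jerison's argument only upgrades $(PI)_{\mu,\delta}$ to $(PI)_\mu$ \emph{once you already have the weak inequality on all balls}; it does not manufacture the global inequality from piecewise ones.

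The paper avoids this entirely by invoking Proposition~\ref{thm:gsc52} (Grigor'yan--Saloff-Coste): under (RCA), it suffices to verify $(VD)_\mu$ and $(PI)_{\mu,\delta}$ on \emph{remote} balls only, together with the single anchored/remote volume comparison \eqref{vc}. Your local analysis handles the remote balls; what you are missing is this reduction theorem and the separate verification of \eqref{vc}, which in turn requires the explicit anchored-volume formula of Proposition~\ref{lem:anch_est} and the non-remote estimate of Proposition~\ref{corintballest}. That is the substantive content replacing your patching step.
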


We shall actually show that $(Z,g, d\mu_{a,b})$ satistifies $(VD)_{\mu}$ and $(PI)_{\mu,\delta}$ for some $\delta \in (0,1)$
and then appeal to Remark~\ref{rem:jerison} to conclude that $(PI)_{\mu}$ holds with $\delta = 1$.

The difficulty in proving this theorem is that balls $B(p,r)$ in $Z$ are characterized differently, depending 
on the location of $p$ and the size of $r$ relative to $\rho(p)$. To describe this, we follow the terminology of \cite{Grigoryan2005}:


\begin{definition}\label{defn:ra}
Fix a basepoint $o \in K_Z$. A geodesic ball $B(o,R)$ centered at $o$ is called {\em anchored}, 
and we denote its volume by
\[
\sA_k(R;a,b) = \mu_{a,b}(B(o,R)).
\] 
Now fix a parameter $c \in (0,1)$. A geodesic ball $B(p,r)$ is called {\em remote} if $r \leq c \, 
\mbox{dist}\,(p,o)$, and we then write
\[
\sR_k(p,r;a,b) = \mu_{a,b}(B(p,r)).
\]  
Finally, if $B(p,r)$ is any ball, possibly neither anchored nor remote, then we write
\[
\sV_k(p,r;a,b) = \mu_{a,b}(B(p,r)).
\]
\end{definition}
Note that $\mbox{dist}\,(o,p)$ is comparable to $\rho(p)$, so we use the latter henceforth. 

A central idea in the analysis below is that under certain conditions on the weight parameters,
and assuming also a topological condition on $Z$ (which is easy to check in our setting)
it is sufficient to check $(VD)_\mu$ and $(PI)_{\mu,\delta}$ only on remote balls. The chain of reasoning
is supplied by the following results.

\begin{proposition}\label{lem:vc}
Suppose that the weight parameters $a\in \RR$ and $b \in \RR^k$ satisfy condition~\eqref{eq:cVC}.
Let $c$ be the parameter used to characterize remote balls in $(Z,g)$. Then there exists a 
constant $C_V > 0$ such that 
\begin{equation}
\sA_k( \rho(p); a,b) \leq C_V \, \sR_k( p, c\, \rho(p); a,b).
\label{vc}
\end{equation}
\end{proposition}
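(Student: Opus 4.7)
The plan is to induct on the depth $k$ and show that, for $R = \rho(p)$ large, both $\sA_k(R; a, b)$ and $\sR_k(p, cR; a, b)$ are comparable to $R^{n+a}$; the desired inequality then follows with a uniform constant, since the complement of a fixed neighborhood of $o$ is compact and the estimate is trivial there. For the base case $k = 0$, polar coordinates on the exterior of the AC space give $dV_g \asymp \rho^{n-1} d\rho \, dA_F$, so both volumes are computed directly and the hypothesis $a + n \geq 0$ guarantees $\sA_0(R; a) \asymp R^{n+a}$; the remote ball volume matches because on $B(p, cR)$ one has $\rho \asymp R$ and the Riemannian volume of the ball is $\asymp R^n$.

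For the inductive step, the key tool is the thickened decomposition $Z = \bigcup_{j=0}^k Z^{(k)}_{(j)}(\eta)$ together with Lemma~\ref{lem:m}, which identifies $g$ on each piece $Z^{(k)}_{(j)}(\eta)$, $j \geq 1$, up to quasi-isometry with the product metric $d\rho_k^2 + \rho_k^2 \kappa_{S_j} + g^{(j-1)}$ on $C_{0,1}(S_j) \times Z^{(j-1)}$, and the weight factorization \eqref{eq:wfj}, which gives $\rho_k^a w^b \asymp \rho_k^{a - |b(j)|} \rho_{j-1}^{|b(j)|} (w^{(j-1)})^{b(j-1)}$. Integrating the weighted volume form over $Z^{(k)}_{(j)}(\eta) \cap B(o, R)$ reduces via Fubini to
\[
\int_0^R \rho_k^{\dim S_j + a - |b(j)|} \, d\rho_k \;\cdot\; \Vol(S_j) \;\cdot\; \sA_{j-1}(\rho_k; |b(j)|, b(j-1)).
\]
The inductive hypothesis applied to $Z^{(j-1)}$ with parameters $(|b(j)|, b(j-1))$ gives $\sA_{j-1}(r; |b(j)|, b(j-1)) \asymp r^{m_{j-1} + |b(j)|}$; the conditions $|b(\ell)| + m_{\ell - 1} \geq 0$ for $\ell \leq j$ from \eqref{eq:cVC} ensure the correct signs of the exponents. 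Since $\dim S_j + m_{j-1} + 1 = n$, each piece contributes $\asymp R^{n + a}$, yielding $\sA_k(R; a, b) \asymp R^{n + a}$ (the piece $j = 0$ is handled analogously, directly as in the base case).

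For the lower bound on $\sR_k(p, cR; a, b)$, I would locate the piece $Z^{(k)}_{(j)}(\eta)$ containing $p$ and write $p = (q_j, z_{j-1})$ in the local product coordinates, with $\rho_k(p) = R$. The same product quasi-isometry shows that $B(p, c'R)$ for a suitable $c' \leq c$ contains a product of balls, and integrating the weighted volume form over that product yields
\[
\sR_k(p, cR; a, b) \gtrsim R^{\dim S_j + 1 + a - |b(j)|} \cdot \sV_{j-1}(z_{j-1}, c'' R; |b(j)|, b(j-1)).
\]
To bound the inner weighted ball volume below by $R^{m_{j-1} + |b(j)|}$, I would distinguish two cases: if $\rho_{j-1}(z_{j-1})$ is small relative to $R$, then $B(z_{j-1}, c'' R)$ contains an anchored ball of $Z^{(j-1)}$ of radius comparable to $R$, whose volume has just been estimated; otherwise $\rho_{j-1}(z_{j-1}) \asymp R$ and $B(z_{j-1}, c'' R)$ contains a remote ball of $Z^{(j-1)}$ to which the inductive hypothesis of the proposition itself applies. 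In either case the inner factor is $\gtrsim R^{m_{j-1} + |b(j)|}$, and multiplying gives $\sR_k(p, cR; a, b) \gtrsim R^{n + a}$.

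The main obstacle will be the case analysis for the location of $z_{j-1}$ inside $Z^{(j-1)}$: the remote/anchored dichotomy in $Z$ does not directly match the one in the subfactor $Z^{(j-1)}$, so the lower bound at each depth must be obtained by combining both the anchored-ball and remote-ball estimates at lower depth. A secondary technical point is to verify carefully that the product quasi-isometry on each thickened piece identifies intrinsic balls in $Z$ with products of balls in $C(S_j)$ and $Z^{(j-1)}$ of the expected radii, so that Fubini may be applied uniformly on both sides of the comparison.
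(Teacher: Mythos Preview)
Your approach is correct and is essentially the paper's: establish $\sA_k(R;a,b)\asymp R^{a+n}$ (the paper's Proposition~\ref{lem:anch_est}) and $\sR_k(p,c\rho_k(p);a,b)\asymp \rho_k(p)^{a+n}$ by induction on depth, using the product description of the thickened pieces and the weight factorization \eqref{eq:wfj}. One organizational difference is worth noting. The case split you anticipate for the subfactor ball $B(z_{j-1},c''R)$ is in fact unnecessary: since $\rho_{j-1}(z_{j-1})\leq \rho_k(p)=R$ always, the ball $B(z_{j-1},cR)$ is \emph{never} remote in $Z^{(j-1)}$. The paper exploits this by first proving a separate non-remote ball estimate (Proposition~\ref{corintballest}: $\sV_{j-1}(z_{j-1},r;\,|b(j)|,b(j-1))\asymp r^{m_{j-1}+|b(j)|}$ for all $r\geq c\,\rho_{j-1}(z_{j-1})$), and then invoking it at $r=c\rho_k(p)$; Proposition~\ref{lem:vc} then follows in one line from \eqref{volanch-2} and \eqref{eq:nrb}. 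Your case analysis would also go through, but the paper's packaging avoids it entirely. (A small slip: for $\rho(p)$ bounded the estimate is trivial because $p$ lies in a compact set near $o$, not in its complement.)
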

The use of such an `anchored/remote' volume comparison to obtain volume doubling appears,
for example, in \cite{Li1995}.

\begin{definition}
 A metric space $(Z,d)$ is said to have the property of relatively connected annuli (RCA) with respect 
to a point $o\in Z$ if there exists a constant $C_A  >1$ such that for any $r>C_A^2$ and 
for every $p,q \in M$ with $d(o,p) =  d(o,q) =r$, there exists a continuous path 
$\gamma\colon [0,1] \to Z$ with $\gamma(0) =p$, $\gamma(1) = q$, and with image contained 
in the annulus $B(o,C_A r) \setminus B(o,C_A^{-1}r)$. 
\end{definition}
\begin{remark}
In our setting $Z$ may have more than one end, which clearly causes
(RCA) to fail. We only need to require that (RCA) holds on each end separately. Indeed, if we
restrict attention to each end, then using (RCA), the arguments in \S 7 show 
that the restrictions of $L_Z$ to the individual ends is Fredholm on the appropriate weighted
spaces, and it is straightforward to deduce from this that $L_Z$ is Fredholm on all of $Z$.
We explain this in more detail later.
\end{remark}

\begin{proposition}[\cite{Grigoryan2005}, Theorem 5.2]
\label{thm:gsc52}
Suppose that $(Z,g,d\mu_{a,b})$ satisfies the (RCA) property with respect a point $o\in Z$,
and assume that $(VD)_{\mu}$ and $(PI)_{\mu, \delta}$ with parameter $\delta \in (0,1]$
hold for all remote balls with respect to $\{o\}$. Then
$(Z,g,d\mu_{a,b})$ satisfies $(VD)_{\mu}$ and $(PI)_{\mu,\delta}$ for
all balls if and only if it satisfies the volume comparison estimate \eqref{vc}.
\end{proposition}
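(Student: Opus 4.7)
The plan is to prove the equivalence in two directions. The easy direction is the ``only if'': if global $(VD)_\mu$ and $(PI)_{\mu,\delta}$ hold, then the remote versions hold as special cases, and the volume comparison \eqref{vc} follows by iterating $(VD)_\mu$ along a chain of $O(\log(1/c))$ overlapping balls of geometrically increasing radii joining $o$ to a point on the sphere of radius $\rho(p)$. Each doubling step costs a factor of $C_D$, so the resulting constant $C_V$ depends only on $c$ and $C_D$.

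For the substantive ``if'' direction, assume (RCA) at $o$, remote $(VD)_\mu$ and $(PI)_{\mu,\delta}$, and the volume comparison \eqref{vc}; the task is to upgrade to global $(VD)_\mu$ and $(PI)_{\mu,\delta}$. Every ball $B(p,r)$ is either remote, in which case the hypotheses apply directly, or non-remote, meaning $r > c\,\rho(p)$. Non-remote balls essentially surround the basepoint: $B(p,r)$ is sandwiched between $B(o,\lambda_1 r)$ and $B(o,\lambda_2 r)$ for constants depending only on $c$. Thus $\mu(B(p,r)) \asymp \sA_k(\lambda r; a,b)$, and \eqref{vc} converts this anchored volume into a comparable remote volume. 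Combining these comparisons with remote $(VD)_\mu$ yields global $(VD)_\mu$ with a uniform constant.

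The genuinely hard part is extending $(PI)_{\mu,\delta}$ to non-remote balls. The plan is a Jerison-type chaining argument: given a non-remote ball $B(p,r)$ and $f$ of $\mu$-mean zero on $B(p,r)$, cover $B(p,\delta^{-1}r)$ by a Whitney-like family of remote balls $\{B(p_i,r_i)\}$ with $r_i \asymp \rho(p_i)$ and bounded overlap multiplicity. By (RCA), any two such balls can be joined by a chain of overlapping remote balls lying in an annular shell $B(o,C_A r)\setminus B(o,C_A^{-1}r)$; the length of each chain is controlled by remote $(VD)_\mu$ applied in this annulus. Apply the remote Poincar\'e inequality on each link, telescope the local means, and sum with weights given by $\mu$-volumes to bound $\|f-\bar f\|_{L^2(B(p,r),\mu)}^2$ by a uniform multiple of $r^2 \int_{B(p,\delta^{-1}r)} |\nabla f|^2\, d\mu$. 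The main obstacle is precisely the bookkeeping in this chain argument: controlling the chain lengths, overlap counts, and cumulative doubling constants uniformly in $p$ and $r$. The (RCA) hypothesis is indispensable here, since without it two points on the same sphere about $o$ could only be joined through chains of uncontrolled length, and the constants would degrade. The freedom to use $\delta < 1$ provides the slack needed to absorb the enlarged balls arising from chaining.
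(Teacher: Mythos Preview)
The paper does not actually prove this proposition; it is quoted verbatim as Theorem~5.2 of \cite{Grigoryan2005}, and the paper offers only a one-paragraph ``rough idea'' immediately afterward: reduce to a combinatorial problem on the discretization of concentric annuli, establish Poincar\'e inequalities on each annulus, and use (RCA) to ensure connectivity of those annuli. Your outline is entirely consistent with this sketch---your Jerison-type chaining through remote balls in annular shells is exactly the mechanism the paper alludes to (and the paper itself invokes Jerison's covering argument in Remark~\ref{rem:jerison}).

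One small imprecision worth flagging: your claim that a non-remote ball $B(p,r)$ is ``sandwiched between $B(o,\lambda_1 r)$ and $B(o,\lambda_2 r)$'' is not literally true as a set containment when $c\rho(p) < r \ll \rho(p)$, since then $o\notin B(p,r)$. What one actually has is the upper containment $B(p,r)\subset B(o,Cr/c)$ together with the lower bound $\mu(B(p,r))\geq \mu(B(p,c\rho(p)))\geq C_V^{-1}\sA_k(\rho(p);a,b)$ coming from \eqref{vc}; the comparison of anchored volumes at different scales then requires an additional argument (this is where the annular decomposition and remote $(VD)_\mu$ on each shell enter). This is a detail rather than a gap in strategy, and the Grigor\textquotesingle yan--Saloff-Coste proof you are summarizing handles it precisely via the annular discretization the paper mentions.
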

To give just a very rough idea of the idea behind this last Proposition, the proof proceeds by 
reducing to a combinatorial problem on the discretization of concentric annuli.  This requires 
that one show that certain Poincar\'e inequalities hold on each such annulus, which
in turn requires connectivity of the annuli, i.e.\ the (RCA) condition.

We devote the remaining of this section to the proof of
Theorem~\ref{thm:main4}, and implicitly of Proposition~\ref{lem:vc}.
The strategy for proving it for a QAC space of
depth $k$ which satisfies the (RCA) property is inductive and uses
Proposition~\ref{thm:gsc52}.  
Both properties $(VD)_{\mu}$ and $(PI)_{\mu,\delta}$ are easy to
check on spaces of depth $0$. Therefore we assume that $(VD)_{\mu}$
and $(PI)_{\mu,\delta}$ as well as the inequality~\eqref{vc} in
Proposition~\ref{lem:vc} have been verified on
all QAC spaces of depth less than $k$. 
To obtain the inductive step it is then enough by
Proposition~\ref{thm:gsc52} to check that on space of depth $k$,  both
$(VD)_{\mu}$ and $(PI)_{\mu,\delta}$ hold on its remote balls, and also to check inequality~\eqref{vc}.
To establish the volume comparison results required by $(VD)_{\mu}$ on
remote balls and by~\eqref{vc}, we first obtain asymptotic formulas
for the measure of balls in a QAC space. Concretely, we obtain an
estimate for the volume of anchored balls, then 
find an inductive relationship between
the measures of remote balls in depth $k$ and volumes of balls (not necessarily
anchored or remote) in lower depth, and end with a further estimate on
the volume of balls which are non-remote. The first two estimates
are
enough to establish $(VD)_{\mu}$, while the third is needed to obtain
the volume comparison inequality~\eqref{vc}. Both these results
require also the constraint on $a$ and $b$ given in~\eqref{eq:cVC}. Once this is done, we then
verify $(PI)_{\mu}$ on remote balls, and thus complete the
inductive step to show that $(VD)_\mu$ and $(PI)_{\mu}$ hold in
depth $k$.  

\subsection{Estimates for the volume of balls in a QAC space}

In this section we give some estimates for the weighted volume of balls in
$(Z^{\d{k}}, g^{\d{k}})$ with measure $d\mu_{a,b}= \rho_k^a (w^{\d{k}})^bdV_{g^{\d{k}}}$
for $a\in \RR$ and $b\in \RR^k$. We first give such an estimate for anchored
balls. Next we prove a crude estimate for the volume of remote balls, in
terms of the volume of balls in QAC spaces of lower depth. Combining
both these estimate we then easily derive the volume comparison
estimate~\eqref{vc} in Proposition 4.1 and $(VD)_\mu$ for remote balls.

\subsubsection{Estimates on the volume of anchored balls in depth $k$}\label{sec:ab}
We begin by estimating the weighted volume $\sA_k(R;a,b)$ of an
anchored ball of radius $R$. If $R\leq 1$, standard comparison theorems
give $\sA_k(R;a,b) \asymp R^n$. As such we may assume that $R>1$.
\begin{proposition}\label{lem:anch_est}
With all notation as above, then for $R > 1$ and any choice of measure
$d\mu_{a, b}$ with parameters $a\in \RR$ and $b\in \RR^k$,  
\begin{equation}
\sA_k(R;a,b) \asymp 1 + R^{a+n}\left(1 + \sum_{j=1}^k R^{- m_{j-1} -|b(j)|}\right).
\label{volanch}
\end{equation} 
\end{proposition}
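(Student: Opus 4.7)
The proof proceeds by induction on the depth $k$. In the base case $k=0$, the space $(Z^{(0)},g^{(0)})$ is AC, so that on the exterior region $dV_g \asymp \rho_0^{n-1}\,d\rho_0\,dA_{Y^{(0)}}$ and $w$ does not appear; integration in polar coordinates gives $\sA_0(R;a) \asymp 1 + R^{a+n}$, matching the claimed formula (with the empty sum).

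For the inductive step, assume the formula holds for all QAC spaces of depth strictly less than $k$. I would decompose the anchored ball using the thickened decomposition of Section~\ref{sec:decomp-t}:
\[
B(o,R) \cap Z^{(k)} = \bigcup_{j=0}^k \bigl(B(o,R)\cap Z^{(k)}_{(j)}(\eta)\bigr),
\]
and estimate the contribution from each piece separately. On $Z^{(k)}_{(0)}(\eta)$ the weight reduces to $\rho_k^a$ (by \eqref{eq:wf0}) and the metric is quasi-isometric to an AC cone metric, producing a term of order $R^{a+n}$. For $j\geq 1$, Lemma~\ref{lem:m} tells us the metric is quasi-isometric to $d\rho_k^2 + \rho_k^2\kappa_{S_j} + g^{(j-1)}$ on a product $C(S_j)\times Z^{(j-1)}$, and \eqref{eq:wfj} gives the weight factor $\rho_k^{a-|b(j)|}\rho_{j-1}^{|b(j)|}w^{(j-1)}(z_{j-1})^{b(j-1)}$. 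Using $\dim S_j = n - m_{j-1} - 1$ and writing the volume element as $\rho_k^{\dim S_j}\,d\rho_k\,dA_{S_j}\,dV_{g^{(j-1)}}$, Fubini's theorem yields
\[
\mu_{a,b}\bigl(B(o,R)\cap Z^{(k)}_{(j)}(\eta)\bigr) \asymp \int_1^R \rho_k^{\,a + n - m_{j-1} - |b(j)| - 1}\,\sA_{j-1}\!\bigl(\rho_k;\,|b(j)|,\,b(j-1)\bigr)\,d\rho_k,
\]
where the inner factor is an anchored-ball volume in $Z^{(j-1)}\in\sQ_{j-1}$.

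Applying the induction hypothesis to $\sA_{j-1}$ and noting $|b(j-1)(\ell)| = |b(\ell)|$ for $\ell\leq j-1$, the integrand becomes a sum of powers $\rho_k^\alpha$; integration from $1$ to $R$ produces a constant (from the lower endpoint) plus terms $R^{\alpha+1}$ from the upper endpoint. Collecting the contributions across all $j=0,\dots,k$, the powers appearing are precisely $R^{a+n}$ (from the $j=0$ piece and the leading $1$ inside each inductive expansion) and $R^{a+n - m_{\ell-1} - |b(\ell)|}$ for each $1\leq \ell\leq k$ (from the explicit $\rho_k^{-m_{\ell-1}-|b(\ell)|}$ terms in the expansion of each $\sA_{j-1}$ with $\ell\leq j$, and from the pure $j=\ell$ contribution). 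Together with the uniform lower bound $\sA_k(R;a,b)\geq \mu_{a,b}(K_Z)\asymp 1$ coming from the compact cap, this is exactly the right-hand side of \eqref{volanch}.

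The main obstacle is the bookkeeping: one must check that the finitely many powers $R^{a+n-m_{\ell-1}-|b(\ell)|}$ for $\ell=1,\dots,k$ really do arise from each relevant piece with positive coefficients (so that the $\asymp$ holds both ways, not just as an upper bound), and that the contributions from the boundary overlaps $Z^{(k)}_{(j)}(\eta)\cap Z^{(k)}_{(j')}(\eta)$ are absorbed into the implicit constants. This requires showing that none of the integrals on individual pieces produce cancellation, which follows because every integrand is nonnegative; and that terms which might naively be dominant (for instance when an exponent becomes negative and the integral is controlled by $\rho_k=1$) still fit into the stated formula, which is why the ``$1+$'' and the constant-in-$R$ contributions need to be tracked carefully. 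No further constraint on $(a,b)$ is needed for the quasi-isometric estimate itself; the constraint \eqref{eq:cVC} will only enter in the next proposition when comparing anchored and remote volumes.
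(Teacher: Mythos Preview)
Your approach is correct and follows the same inductive strategy as the paper. The only organizational difference is that the paper handles all pieces $Z^{(k)}_{(j)}$ with $j<k$ in a single stroke: their union is the region $(Z^{(k)})''$, which is itself QAC of depth $k-1$, so the induction hypothesis applied once to this space immediately yields $1+R^{a+n}\bigl(1+\sum_{j=1}^{k-1}R^{-m_{j-1}-|b(j)|}\bigr)$; only the $j=k$ piece is then computed explicitly via the fibration over $C(S_k)$ with fiber $Z^{(k-1)}$. You instead treat every piece $j=0,\dots,k$ separately, writing each as a fibration over $C(S_j)$ and invoking the induction hypothesis on each $\sA_{j-1}$. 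This requires more bookkeeping (which you correctly flag) but produces exactly the same list of powers of $R$; the paper's organization simply avoids recomputing the $j<k$ contributions. Your remark that no constraint on $(a,b)$ is needed here, and that \eqref{eq:cVC} enters only later, is also consistent with the paper.
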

\begin{proof}
 We prove this by induction. When $k=0$, then $(Z^{\d{0}}, g^{\d{0}})$
 is an AC space and our measure is $d\mu_a(x) = \rho_0(x)^a
 dV_{g^{\d{0}}}(x)$. Then $\sA_0 (R; a)\asymp 1 + R^{a+n}$ with the constant
 coming from the compact part $K_{Z^{\d{0}}}$ and the term $R^{a+n}$
 coming from the AC end. 

We now assume that the statement is true for all QAC spaces of depth strictly less than $k$ and prove it for $(Z^{\d{k}}, g^{\d{k}})$ 
a QAC space of depth $k$. Using $Z^{\d{k}}=  \bigsqcup_{j=0}^k Z^{\d{k}}_{\d{j}}$ from~\eqref{eq:Zkj},  we have
\[
\sA_k(R; a, b)  = \mu_{a,b} (\{\rho_k \leq R\} ) 
= \sum_{j \leq k} \mu_{a, b}\left(\{\rho_k \leq R\} \cap Z^{\d{k}}_{\d{j}}\right). 
\]
The terms with $j < k$ involve integrals over regions which stay well away from the depth $k$ 
stratum, so by induction 
 \[
\sum_{j =0}^{k-1}\mu_{a,b}\left(\{\rho_k \leq R\} \cap Z^{\d{k}}_{\d{j}}\right) \asymp 1 + R^{a+n}(1 + \sum_{j=1}^{k-1} R^{-m_{j-1} - |b(j)|}).
\]
Thus we need only focus on the term with $j=k$. This too is done by induction. As in Section~\ref{sec:decomp}, $Z^{\d{k}}_{\d{k}}$ 
is a cone over $Y^{\d{k}}_{\d{k}} \subset Y^{\d{k}}$, which is itself a bundle over $S_k$ with fiber a truncation of $Z^{\d{k-1}}$. 
Since $S_k$ is a compact smooth manifold, we can break the integral up into a finite number of integrals over trivialized
neighbourhoods of this bundle where the metric is quasi-isometric to the product 
\[
d\rho_k^2 + \rho_k^2 d\kappa_{S_k}^2 + g^{(k-1)},
\]
see Lemma~\ref{lem:m}. Formula~\eqref{eq:w-Zkj} gives
\[ 
w^{\d{k}}_k(z) = \frac{\rho_{k-1}(z_{k-1})}{\rho_k(z)}<1  \quad\text{and}\quad
w^{\d{k}}_i(z) = \frac{\rho_{k-1}(z_{k-1})}{\rho_k(z)} w^{\d{k-1}}_i(z)
\]
when $i \leq k-1$. Hence, we have 
\begin{align*}
   d\mu_{a,b} 
   & \asymp \rho_k^{a+n-m_{k-1}-1-|b(k)|} \rho_{k-1}^{|b(k)|}
   (w^{\d{k-1}})^{b(k-1)} d\rho_k \ dV_{\kappa_{S_k}}\ dV_{g^{\d{k-1}}},
\end{align*}
since $\dim S_k = n-m_{k-1}-1$.  However, $\rho_{k-1}\leq \rho_k$ on $Z^{\d{k}}_{\d{k}}$, and hence 
\[
  \mu_{a,b} (\{\rho_k \leq R\} \cap Z^{\d{k}}_{\d{k}})
  \asymp  \int_{1}^{R} \rho_k^{a+n-m_{k-1}-1-|b(k)|}
  \sA_{k-1}(\rho_{k}; |b(k)|, b(k-1))\, d\rho_k.
\]
Using the inductive hypothesis, this becomes
\[
\int_1^R \rho_k^{a+n-m_{k-1}-|b(k)|-1}\bigg(1 + \rho_k^{|b(k)|+m_{k-1}}\bigg( 1 + 
\sum_{j=1}^{k-1} \rho_k^{-m_{j-1} - |b(j)|}\bigg)\bigg)\, d\rho_k,
\]
which reduces to the expression for $\sA_k(R;a,b)$.  This completes the inductive step.
\end{proof}

Note that if $a$ and $b$ satisfy~\eqref{eq:cVC} then the estimate~\eqref{volanch} for
the volume of an anchored ball becomes
\begin{equation}\label{volanch-2}
 \sA_k(R; a, b) \asymp R^{a+n}
\end{equation}
for all $R>1$. 

\subsubsection{A preliminary estimate for the volume of remote balls in depth $k$}\label{sec:rb}
Unfortunately we cannot obtain such a simple and definitive estimate for the volumes of remote balls. 
As a first step, we now relate remote balls in depth $k$ to balls, which may be neither remote nor anchored, in lower depth. 
 
For this we use the thickened decomposition $Z^{\d{k}} = \bigcup_{j=0}^k Z^{\d{k}}_{\d{j}}(\eta)$, $\eta\in
(0,1)$, introduced in~\eqref{eq:Zkj-t}. The following lemma, which establishes a relation between
the biggest remote balls $B(p, c\rho_k(p))$ and this decomposition, is key in the argument below.
\begin{lemma}\label{lem:rb}
Let $(Z^{\d{k}},g^{\d{k}}) \in \sQ_k$ and let $c \in (0,1)$ be a remote parameter. Then there exists $\eta \in (0,1)$,
depending only on $(Z^{\d{k}},g^{\d{k}})$ and $c$, so that for each $p \in Z^{\d{k}}$, the ball $B(p, c\rho_k(p))$ 
is contained in $Z^{\d{k}}_{\d{j}}(\eta)$ for some $j\leq k$. In fact, we can choose $\eta$ so that if 
\[
w_k^{\d{k}}(p), \ldots, w_{j+1}^{\d{k}}(p) \geq 1- 2c  \quad \text{and} \quad w_j^{\d{k}}(p) < 1- 2c
\]
for some $j \leq k$, then
\[
B(p, c\rho_k(p)) \subset Z^{\d{k}}_{\d{j}}(\eta).
\]
\end{lemma}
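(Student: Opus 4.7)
The plan is to argue by a continuity/bootstrap argument using the quasi-isometric product description of $g^{(k)}$ on the thickened piece $Z^{(k)}_{(j)}(\eta_0)$ from Lemma~\ref{lem:m}, combined with the explicit asymptotic formulas \eqref{eq:w-Zkj-t} for the weight functions. Define $j$ from $p$ by taking the largest index with $w^{(k)}_j(p) < 1-2c$, and set $j=0$ if no such index exists; both conclusions of the Lemma will follow from showing $B(p, c\rho_k(p)) \subset Z^{(k)}_{(j)}(\eta)$ for this $j$ and a suitable $\eta$ depending only on $c$ and the geometry.

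Fix $j$ as above and set
\[
T := \sup\{ r \in [0, c\rho_k(p)] : B(p,r) \subset Z^{(k)}_{(j)}(\eta) \},
\]
where $\eta$ will be chosen below. The hypothesis places $p$ in $Z^{(k)}_{(j)}(\eta_0)$ for any $\eta_0 > 2c$, so $T > 0$. Suppose for contradiction that $T < c\rho_k(p)$. On $B(p,T)$ Lemma~\ref{lem:m} identifies $g^{(k)}$ with the product $d\rho_k^2 + \rho_k^2\kappa_{S_j} + g^{(j-1)}$ (or the AC analogue if $j=0$) up to a uniform quasi-isometry, from which two Lipschitz estimates follow:
\[
(1-c)\rho_k(p) \leq \rho_k(z') \leq (1+c)\rho_k(p), \qquad |\rho_{j-1}(z_{j-1}') - \rho_{j-1}(z_{j-1}(p))| \leq c\rho_k(p).
\]
Feeding these into \eqref{eq:w-Zkj-t}, one obtains for every $z' \in B(p,T)$
\[
w^{(k)}_j(z') \leq \frac{w^{(k)}_j(p) + c}{1-c} < 1
\]
using $w^{(k)}_j(p) < 1-2c$, and, by the analogous computation applied to the formulas for the remaining weights,
\[
w^{(k)}_i(z') \geq w^{(k)}_i(p) - Kc \geq 1-(K+2)c \qquad \text{for all } i > j,
\]
where $K$ depends only on the quasi-isometry constants. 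Choosing $\eta$ slightly larger than $(K+2)c$ guarantees that at any limit point $z^*\in\overline{B(p,T)}\setminus Z^{(k)}_{(j)}(\eta)$ these estimates persist by continuity, strictly violating the defining inequalities for the complement—a contradiction, so $T = c\rho_k(p)$ and the desired inclusion holds.

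The main obstacle is the mild circularity of the weight estimates: the product description of the metric and the explicit formulas for the $w^{(k)}_i$ are only guaranteed on $Z^{(k)}_{(j)}(\eta_0)$, yet the argument that the ball remains inside this region relies on them. The bootstrap formulation above sidesteps this, and the crucial quantitative slack that makes it run is the gap between the hypothesis $w^{(k)}_j(p) < 1-2c$ and the target strict inequality $w^{(k)}_j < 1$, which survives the absorption of the quasi-isometry constants into $K$.
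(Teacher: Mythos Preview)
Your approach differs from the paper's: the paper argues by induction on the depth $k$, handling only the single top weight $w_k^{(k)}$ at each stage. Since $w_k^{(k)}$ equals $\rho_{k-1}/\rho_k$ on $(Z^{(k)})'$ and $1$ on $(Z^{(k)})''$, the Lipschitz control of $\rho_{k-1}$ and $\rho_k$ immediately gives either $B(p,c\rho_k(p))\subset Z^{(k)}_{(k)}$ (when $w_k^{(k)}(p)<1-2c$) or $w_k^{(k)}>1-4c$ on the ball (otherwise). In the latter case the ball sits in a region of depth $k-1$ and one inducts. No bootstrap is needed and only one weight function is ever estimated per step.

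Your direct bootstrap instead requires controlling all $w_i^{(k)}$ with $i>j$ simultaneously, and the step ``by the analogous computation applied to the formulas for the remaining weights'' is the gap. The product description of Lemma~\ref{lem:m} on $Z^{(k)}_{(j)}(\eta)$ is in coordinates $(q_j,z_{j-1})\in C(S_j)\times Z^{(j-1)}$, and the formulas \eqref{eq:w-Zkj-t} express $w_j^{(k)}$ and the lower weights $w_i^{(k)}$, $i<j$, in those coordinates. But for $i>j$ the only information \eqref{eq:w-Zkj-t} carries is $w_i^{(k)}\asymp 1$; these higher weights are \emph{not} functions of $(q_j,z_{j-1})$ at all, so your two Lipschitz estimates for $\rho_k$ and $\rho_{j-1}$ say nothing about them. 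There is no ``analogous computation'' to perform, and the bound $w_i^{(k)}(z')\ge w_i^{(k)}(p)-Kc$ does not follow from the ingredients you cite. To get that bound you would need to unwind the recursive definition of each $w_i^{(k)}$ as (essentially) $\rho_{i-1}/\rho_k$ on the appropriate region and establish a Lipschitz estimate for $\rho_k w_i^{(k)}$; but that is exactly the inductive structure the paper uses, and once you grant it the bootstrap machinery is superfluous.

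A secondary point: even for $w_j^{(k)}$ your computation treats \eqref{eq:w-Zkj-t} as an exact identity, whereas on the thickened piece it is only $\asymp$. The exact formula $w_j^{(k)}=\rho_{j-1}/\rho_k$ holds on the unthickened $Z^{(k)}_{(j)}$; on the overlap with neighbouring pieces the quasi-isometry constants enter, and the margin $1-2c$ versus $1$ is not obviously enough to absorb them. The paper sidesteps this because at each inductive stage it works with $w_k^{(k)}$, whose two-region description ($\rho_{k-1}/\rho_k$ or identically $1$) is exact globally.
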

\begin{proof}
First, if $w^{\d{k}}_k(p) < 1-2c$ then clearly $B(p, c\rho_k(p)) \subset Z^{\d{k}}_{\d{k}}$.  On the other hand, if
$w^{\d{k}}_k(p) \geq 1-2c$, it follows that $w^{\d{k}}_k(z) > 1- 4c$  for all $z \in B(p, c\rho_k(p))$, and hence
$B(p, c\rho_k(p)) \subset \union_{j=0}^{k-1} Z^{\d{k}}_{\d{j}} (4c)$. Since this only involves QAC spaces of depth $k-1$,
the conclusion follows by induction. 
\end{proof}

With this Lemma, we see that each maximal remote ball $B(p, c\rho_k(p))$ lies in $Z^{\d{k}}_{\d{j}} (\eta)$ for some 
$\eta\in (0,1)$ and $j \leq k$.  If $j=0$, then this remote ball lies in $Z^{\d{k}}_{\d{0}}(\eta)$, and the measure $d\mu_{a,b}$ 
on this set is comparable to $\rho_k^a\,dV_{g^{\d{k}}}$. On the other hand, when $j \geq 1$, then this remote ball 
lies in $Z^{\d{k}}_{\d{j}}(\eta)$, and so we can assume that it lies in the product $C(S_{j}) \times Z^{\d{j-1}}$. Denote the components 
of $p$ in this splitting by $(q_j,p_{j-1})$. Note that $B(p_{j-1}, r)$ lies in the region $\{\rho_j \leq
\rho_k(p)\}$ in $Z^{(j-1)}$. We use this as follows.
\begin{proposition}\label{lem:rem}
Fix $p \in Z^{\d{k}}$, and suppose that $w_i^{\d{k}}(p) > 1- 2c$ for all $i \geq j+1$ while $w_j^{\d{k}}(p)< 1- 2c$. 
Then for all $0< r \leq c \rho_k(p)$ 
\begin{equation}\label{eq:c-rem}
\begin{split}
\sR_k( & p,r; a, b) \\
\quad & \asymp \begin{cases}
     \rho_k(p)^a r^n & \text{if $j=0$}\\
     \rho_k(p)^{a-|b(j)|} r^{n-m_{j-1}} \sV_{j-1}(p_{j-1}, r;|b(j)|, b(j-1)) & \text{if $j \geq 1$}.
  \end{cases}
\end{split}
\end{equation}
\end{proposition}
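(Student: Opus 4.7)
The plan is to invoke Lemma~\ref{lem:rb} to place the ball $B(p,r) \subset B(p,c\rho_k(p))$ inside the single piece $Z^{\d{k}}_{\d{j}}(\eta)$ singled out by the hypothesis on $(w_{j+1}^{\d{k}}(p),\ldots,w_k^{\d{k}}(p),w_j^{\d{k}}(p))$, and then compute the weighted volume using the product description of the metric provided by Lemma~\ref{lem:m} together with the explicit formula~\eqref{eq:wfj} for the weight factor on this piece. A key preliminary observation, used throughout, is that $r \leq c\rho_k(p)$ together with the fact that $\rho_k$ is (up to bounded multiplicative error) a distance function, forces $\rho_k(z) \asymp \rho_k(p)$ for every $z \in B(p,r)$, so that any factor depending only on $\rho_k$ may be pulled outside the integral.

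The case $j = 0$ is immediate. Here Lemma~\ref{lem:rb} puts $B(p,r) \subset Z^{\d{k}}_{\d{0}}(\eta)$; by~\eqref{eq:wf0} the weight factor reduces to $\rho_k^a$, and by Lemma~\ref{lem:m} the metric is quasi-isometric to the AC model $d\rho_k^2 + \rho_k^2 h_0^{\d{k}}$. Bounded geometry (Remark after Lemma~\ref{lem:weight-const}) gives $\vol_g(B(p,r)) \asymp r^n$, and constancy of $\rho_k$ on $B(p,r)$ yields $\sR_k(p,r;a,b) \asymp \rho_k(p)^a r^n$.

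The case $j \geq 1$ is the substantive one. Lemma~\ref{lem:rb} places $B(p,r)$ inside $Z^{\d{k}}_{\d{j}}(\eta)$, which is a bundle over $S_j$ with fiber a truncation of $Z^{\d{j-1}}$. In a local trivialization with coordinates $(\rho_k, q_j, z_{j-1})$ near $p = (\rho_k(p), q_j(p), p_{j-1})$, Lemma~\ref{lem:m} gives $g^{\d{k}} \asymp d\rho_k^2 + \rho_k^2 \kappa_{S_j} + g^{\d{j-1}}$, hence volume element $\rho_k^{n-m_{j-1}-1}\, d\rho_k\, dV_{\kappa_{S_j}}\, dV_{g^{\d{j-1}}}$ since $\dim S_j = n - m_{j-1} - 1$, while \eqref{eq:wfj} gives
\[
\rho_k(z)^a\, w^{\d{k}}(z)^b \asymp \rho_k(z)^{a-|b(j)|}\, \rho_{j-1}(z_{j-1})^{|b(j)|}\, w^{\d{j-1}}(z_{j-1})^{b(j-1)}.
\]
Since $\rho_k(z) \asymp \rho_k(p)$ on $B(p,r)$, the ball decomposes quasi-isometrically as the product of an interval of length $\asymp r$ in $\rho_k$, a $\kappa_{S_j}$-ball around $q_j(p)$ of radius $\asymp r/\rho_k(p)$ (the scaling by $\rho_k(p)$ converts the intrinsic radius $r$ in the metric $\rho_k^2\kappa_{S_j}$), and the ball $B(p_{j-1},r) \subset Z^{\d{j-1}}$. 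Pulling the (essentially constant) factor $\rho_k(p)^{a-|b(j)|+n-m_{j-1}-1}$ out of the integral and using $\vol_{\kappa_{S_j}}(B(q_j(p), r/\rho_k(p))) \asymp (r/\rho_k(p))^{n-m_{j-1}-1}$ (valid because $S_j$ is smooth and compact and the radius is $\leq c < 1$), the three factors combine to
\[
\rho_k(p)^{a-|b(j)|+n-m_{j-1}-1} \cdot r \cdot \bigl(r/\rho_k(p)\bigr)^{n-m_{j-1}-1} \cdot \sV_{j-1}\bigl(p_{j-1}, r;\, |b(j)|, b(j-1)\bigr),
\]
and the $\rho_k(p)$ exponents telescope to $\rho_k(p)^{a-|b(j)|}$, matching the claim.

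The main obstacle is justifying the quasi-isometric product decomposition of $B(p,r)$ in the twisted metric where the $S_j$ factor is scaled by $\rho_k$: one must choose the remote constant $c$ small enough (relative to the injectivity radius of the compact base $S_j$ and to the diameter bound on the bundle charts) so that the relevant $\kappa_{S_j}$-ball around $q_j(p)$ sits in a single trivializing chart, ensuring that the bundle trivialization error and the $\rho_k$-scaling are absorbed into the quasi-isometry constants. All other steps are routine Fubini-type integration once this product structure is in place.
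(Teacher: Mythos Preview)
Your proposal is correct and follows essentially the same approach as the paper's own proof. The only cosmetic difference is that the paper treats the cone factor $C(S_j)$ (with metric $d\rho_k^2 + \rho_k^2\kappa_{S_j}$) as a single factor and simply asserts that a ball of radius $r$ in it has volume $\asymp r^{n-m_{j-1}}$, whereas you split this into the $\rho_k$-interval of length $\asymp r$ and the $\kappa_{S_j}$-ball of radius $\asymp r/\rho_k(p)$ and then recombine; the outcome and the underlying ideas are identical.
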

\begin{proof}
First note that $\rho_k(z) \asymp \rho_k(p)$ for all $z \in B(p, c\rho_k(p))$. Moreover, when $j=0$, the estimate is obvious, 
since $B(p, c\rho_k(p)) \subset Z^{\d{k}}_{\d{0}}(\eta)$ and this is an AC space. 

When $j \geq 1$, then $B(p, c\rho_k(p))\subset Z^{\d{k}}_{\d{j}}(\eta)$ and this is the total space of a bundle over
$C(S_{j})$ with fiber $Z^{\d{j-1}}$. As such, we can replace the ball $B(p,r)$ by a product
of balls $B(q_j,r) \subset C(S_{j})$ and $B(p_{j-1},r) \subset Z^{\d{j-1}}$.  The factor $r^{n-m_{j-1}}$ appears from
the first factor since $\dim C(S_{j}) = n - m_{j-1}$. For the second factor, we can use 
the inductive definition of the functions $w^{\d{k}}_i$ to write (as in Section~\ref{sec:nw}) 
\[
  w^{\d{k}}(z)^{b} 
 \asymp w^{\d{k}}_{1}(z)^{b_{1}}\ldots w^{\d{k}}_{j}(z)^{b_j} 
 \asymp \left(\frac{\rho_{j-1}(z_{j-1})}{\rho_k(z)}\right)^{|b(j)|} w^{\d{j-1}}(z_{j-1})^{b(j-1)}
\]
for all $z\in B(p,r)$. Here we use that  $w^{\d{k}}_i (z) \asymp 1$ for all $i \geq j +1$. 
This shows that the measure of this second factor is comparable to 
\[\rho_k(p)^{-|b(j)|}\sV_{j-1}(p_{j-1}, r; |b(j)|, b(j-1)).\] 
%
\end{proof}

To proceed, it would be necessary to subdivide further into the cases when the projected 
ball $B(p_{j-1},r) \subset Z^{(j-1)}$ is remote, anchored, or neither. Since this is not needed right away,
we defer this computation to Section~\ref{sec:rem-sharp}.   On the other hand, for the proof of the 
volume comparison estimate~\eqref{vc} in Proposition~\ref{lem:vc}, we do require an estimate 
of the volume of the maximal remote balls $B(p, c\rho_k(p))$; this is carried out in the following subsection.

\subsubsection{Estimates on the volume of non-remote balls}\label{sec:nrb}
Using the estimates above for the volume of anchored and remote balls, we are now able to give sharp 
estimates for the measure of all non-remote balls. 

\begin{proposition}\label{corintballest}
Assume that $a\in \RR$ and $b \in \RR^k$ satisfy condition~\eqref{eq:cVC}. 
 Let $c \in (0,1)$ be a remote parameter.  If $r \geq c \rho_k(p)$, then
\begin{equation}\label{eq:nrb}
\sV_k(p,r; a, b) \asymp r^{a+n}
\end{equation}
for all $p \in Z^{\d{k}}$.
\end{proposition}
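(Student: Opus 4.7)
The plan is to deduce this non-remote volume estimate by combining the anchored ball estimate from Proposition~\ref{lem:anch_est} with the remote-ball reduction in Proposition~\ref{lem:rem}, and induct on the depth $k$. The base case $k=0$ is an elementary computation on an AC space, so I focus on the inductive step, assuming Proposition~\ref{corintballest} already in depths $< k$.

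For the upper bound I would use the triangle inequality: since $r \geq c\rho_k(p)$ we have $\rho_k(p) \leq r/c$, hence $B(p,r)\subset B(o,(1+c^{-1})r)$. Under~\eqref{eq:cVC}, the anchored ball estimate~\eqref{volanch-2} then yields
\[
  \sV_k(p,r;a,b)\leq \sA_k((1+c^{-1})r;a,b)\asymp r^{a+n}.
\]

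For the lower bound I split into two subcases depending on the ratio $\rho_k(p)/r$. If $\rho_k(p)\leq r/2$, then the triangle inequality gives $B(o,r/2)\subset B(p,r)$, and~\eqref{volanch-2} again yields $\sV_k(p,r;a,b)\geq \sA_k(r/2;a,b)\asymp r^{a+n}$. If instead $\rho_k(p)>r/2$, then $\rho_k(p)\asymp r$, and the non-remoteness hypothesis $c\rho_k(p)\leq r$ ensures $B(p,r)$ contains the maximal remote ball $B(p,c\rho_k(p))$. By Lemma~\ref{lem:rb} I may place this remote ball in some piece $Z^{\d{k}}_{\d{j}}(\eta)$, and apply Proposition~\ref{lem:rem} at $r_0=c\rho_k(p)$. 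In the case $j=0$ the formula immediately gives $\sR_k(p,c\rho_k(p);a,b)\asymp \rho_k(p)^{a+n}\asymp r^{a+n}$. For $j\geq 1$ the estimate reduces to computing $\sV_{j-1}(p_{j-1},c\rho_k(p);|b(j)|,b(j-1))$; since $\rho_{j-1}(p_{j-1})\leq \rho_k(p)$ on $Z^{\d{k}}_{\d{j}}$, the radius satisfies $c\rho_k(p)\geq c\rho_{j-1}(p_{j-1})$, so this lower-depth ball is non-remote, and the conditions~\eqref{eq:cVC} on $(a,b)$ restrict directly to the analogous conditions on the reduced parameter pair $(|b(j)|,b(j-1))$ at depth $j-1$. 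By the inductive hypothesis this yields $\sV_{j-1}(p_{j-1},c\rho_k(p);|b(j)|,b(j-1))\asymp (c\rho_k(p))^{|b(j)|+m_{j-1}}$, and substituting back the exponents combine as
\[
  \rho_k(p)^{a-|b(j)|}(c\rho_k(p))^{n-m_{j-1}}(c\rho_k(p))^{|b(j)|+m_{j-1}}\asymp \rho_k(p)^{a+n}\asymp r^{a+n}.
\]

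The main obstacle is the inductive step in the second subcase: one must verify that the Proposition~\ref{lem:rem} reduction at the maximal remote radius $c\rho_k(p)$ really does feed into a \emph{non-remote} sub-ball at depth $j-1$ and that the compatibility condition~\eqref{eq:cVC} descends correctly to the truncated weight data, so that the inductive hypothesis applies. The remaining task is then the bookkeeping of exponents, which collapses cleanly to $r^{a+n}$ because the $m_{j-1}$ and $|b(j)|$ contributions appearing in Proposition~\ref{lem:rem} are precisely cancelled by those produced by the inductive non-remote estimate at depth $j-1$.
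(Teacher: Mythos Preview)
Your proposal is correct and follows essentially the same inductive strategy as the paper's own proof: both compare to anchored balls via~\eqref{volanch-2} for the upper bound and for the lower bound when $r$ is large relative to $\rho_k(p)$, and both invoke Proposition~\ref{lem:rem} together with the induction hypothesis at depth $j-1$ to obtain the lower bound when $r\asymp\rho_k(p)$. The only cosmetic difference is the threshold in the case split --- the paper uses $r=C\rho_k(p)$ with $C$ the uniform diameter bound of the cross-sections $(Y^{\d{k}},h^{\d{k}}_\e)$, whereas you use a direct triangle-inequality containment $B(o,r/2)\subset B(p,r)$ --- and this does not affect the argument.
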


\begin{proof}
We prove this by induction.  The estimate in the case $k=0$ is obvious by rescaling. 

Now assume  that \eqref{eq:nrb} holds for all QAC spaces of depth strictly less than $k$. 

Let $(Y^{\d{k}}, h^{\d{k}}) \in \sD_k$ be the cross-sections of $(Z^{\d{k}}, g^{\d{k}})$. From the discussion in 
Section~\ref{ssec:Dk} these spaces have diameter uniformly bounded above, say by some constant $C>2$.
For any $p \in Z^{\d{k}}$, if $r> C \rho_k(p)$,   then 
\[
\{x : \rho_k(x) \leq (C - 1) r \} \subset B(p,r) \subset \{x: \rho_k(x) \leq (C + 1) r\},
\] 
and hence, for such values of $r$, $\sV_k(p,r;a,b) \asymp \sA_k(r;a,b) \asymp r^{a+n}$ by~\eqref{volanch-2}.
On the other hand, for radii in the intermediate range $c \rho_k(p) \leq r \leq C\rho_k(p)$, we have
 \begin{equation}
  \sR_k(p, c\rho_k(p); a, b) \leq \sV_k(p,r;a,b) \leq  \sA_k(C \rho_k(p); a, b).
 \label{intranger}
 \end{equation}
  From~\eqref{eq:c-rem} and the induction hypothesis, the left side
  behaves like $\rho_k(p)^{a+n}$, since $\rho_{j-1}(p_{j-1}) \leq
  \rho_k(p)$ for all $1\leq j \leq k$.  From~\eqref{volanch-2} the right
  side is also of order  $\rho_k(p)^{a+n}$. Therefore~\eqref{eq:nrb}
  is true for all $r \geq c\rho_k(p)$. 
\end{proof}

\subsection{Proof of the volume comparison estimate~\eqref{vc} in Proposition~\ref{lem:vc}}
Using the volume estimates above, we can now easily derive the volume comparison results needed in 
the proof of Theorem~\ref{thm:main4}.

Suppose that the volume doubling estimate has been established for all spaces of depth less than $k$.  

Let $B(p,r)$ be any remote ball in $Z$. It is contained the maximal remote ball $B(p,c\rho_k(p))$.
By Lemma~\ref{lem:rb}, $B(p,c\rho_k(p))$ is contained in some $Z^{\d{k}}_{\d{j}}(\eta)$. If $j=0$, then the 
estimate in Propostion~\ref{lem:rem} shows that $(VD)_{\mu}$ holds. If $j \geq 1$, then 
\[
\sR_k (p, r; a, b) \asymp  \rho_k(p)^{a-|b(j)|} r^{n-m_{j-1}} \sV_{j-1}(p_{j-1}, r;|b(j)|, b(j-1)),
\]
and using the inductive hypothesis for the second factor we see that $(VD)_{\mu}$ is true for spaces
of depth $k$ as well.

Finally, consider any $p \in Z^{\d{k}}$.  Apply \eqref{volanch-2} with $R = \rho_k(p)$ and \eqref{eq:nrb} with 
$r = c\rho_k(p)$ to obtain \eqref{vc}.

\medskip
 
\subsection{Proof of $(PI)_{\mu,\delta}$ on remote balls}
We now turn to the proof that $\mbox{(PI)}_{\mu,\delta}$ holds on every remote ball in a space of depth $k$.
Various forms of the Poincar\'e inequality in the literature are appropriate for our purposes. 
We quote the following, which follows from \cite[Corollary 1]{Chen1997}.

\begin{proposition}[\cite{Chen1997}, Corollary 1]
Let $(M^n,g)$ be a compact Riemannian manifold, possibly with
boundary, with $\mbox{Ric} \geq -(n-1)a g$ for some $a>0$. Let $B(p,r)$ be a geodesic ball disjoint from $\del M$. For any function $f$, denote by
$\bar{f}$ its average over this ball. Then there exists a constant $C>0$ depending on $M$ and $g$
such that for any $f \in H^1(B(p,r))$,
\begin{equation}\label{eq:poincare}
\int_{B(p,r)} |f - \bar{f}|^2\, dV_g  \leq C \,r^{2} \int_{B(p,r)}|\nabla f|^2\, dV_g.
\end{equation}
\label{ChenLi}
\end{proposition}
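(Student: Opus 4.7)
This Proposition is quoted from Chen--Li, so in the paper itself the statement is used as a black box; nevertheless, let me sketch how I would reconstruct its proof. The result is a Buser-type Poincar\'e inequality on a single geodesic ball under a Ricci lower bound, with the crucial feature that the gradient integral on the right is over the \emph{same} ball $B(p,r)$ rather than a concentric enlargement. The plan is to combine Bishop--Gromov volume comparison with a segment-integral technique going back to Buser and refined by Cheeger--Saloff-Coste and Cheeger--Colding.

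First I would reduce to the pairwise form
\[
\int_{B(p,r)} |f-\bar f|^2\, dV_g \leq \frac{1}{\mathrm{Vol}(B(p,r))} \int_{B(p,r)\times B(p,r)} |f(x)-f(y)|^2\, dV_g(x)\,dV_g(y),
\]
which is just Jensen applied to the definition of $\bar f$. For almost every pair $(x,y)$, fix a minimizing geodesic $\gamma_{xy}\colon [0,d(x,y)]\to M$ and bound $|f(x)-f(y)|^2 \leq d(x,y)\int_0^{d(x,y)} |\nabla f|^2(\gamma_{xy}(s))\, ds$ by Cauchy--Schwarz. Since $d(x,y)\leq 2r$, the problem reduces to estimating
\[
I := \int_{B(p,r)\times B(p,r)} \int_0^{d(x,y)} |\nabla f|^2(\gamma_{xy}(s))\, ds\, dV_g(x)\, dV_g(y)
\]
from above by $C\,r\,\mathrm{Vol}(B(p,r))\int_{B(p,r)} |\nabla f|^2\, dV_g$.

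The heart of the argument is the segment inequality: using a Jacobian comparison along geodesics (which is where $\mathrm{Ric}\geq -(n-1)a\,g$ and the disjointness from $\partial M$ enter, via Bishop--Gromov and the fact that cut points behave well), one bounds the Jacobian of the map $(x,y,s)\mapsto \gamma_{xy}(s)$ by a quantity depending only on $n$, $ar^2$, and the volume of $B(p,r)$. This upgrades the triple integral into a bound of the form $C(n,ar^2)\,r\,\mathrm{Vol}(B(p,r))\int_{\Omega}|\nabla f|^2$, where $\Omega$ is the union of all such geodesic segments. The main obstacle, and what distinguishes the Chen--Li refinement from the classical Buser inequality, is that $\Omega$ a priori sits inside $B(p,2r)$ rather than $B(p,r)$; indeed a minimizing geodesic between two points of $B(p,r)$ need not remain inside $B(p,r)$. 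To retain $B(p,r)$ on the right-hand side I would follow Chen--Li and combine (i) a star-shapedness/bad-set argument (bounding the measure of pairs whose connecting geodesic exits $B(p,r)$), (ii) Bishop--Gromov doubling to absorb the resulting terms, and (iii) a Whitney-type decomposition of $B(p,r)$ into subballs on which the local Poincar\'e inequality already holds with the same ball on both sides.

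Finally, the constant $C$ will depend on $n$, on the lower Ricci bound $a$, and on $r$ through $ar^2$; but since this Proposition is applied in the paper at a fixed geometric scale (on remote balls within a trivialization where the geometry is essentially a product of bounded-geometry pieces), the dependence of $C$ on these parameters is harmless for the subsequent use in verifying $(PI)_{\mu,\delta}$ on QAC spaces.
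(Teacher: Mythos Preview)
You correctly observe that the paper does not prove this Proposition: it is quoted verbatim from \cite{Chen1997} and used as a black box in the subsequent verification of $(PI)_{\mu,\delta}$ on remote balls. There is therefore no ``paper's own proof'' to compare against; your decision to sketch a reconstruction is the right call.

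Your outline is a reasonable route to such a result. The reduction to the pairwise integral via Jensen and the Cauchy--Schwarz bound along minimizing geodesics are standard, and the segment inequality (in the style of Buser, Cheeger--Colding) is indeed the natural tool for converting this into a gradient bound under a Ricci lower bound. You also correctly isolate the main subtlety, namely that geodesics between points of $B(p,r)$ may exit $B(p,r)$, so that a naive application yields only a weak Poincar\'e inequality with $B(p,2r)$ on the right. Your proposed remedies (bad-set estimate, Bishop--Gromov doubling, Whitney-type covering) are all legitimate ingredients; in fact the Whitney/Jerison mechanism alone, combined with volume doubling, already upgrades a weak Poincar\'e inequality to a strong one, so steps (i)--(iii) are somewhat redundant with one another.

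One small point worth flagging: the Proposition as stated allows the constant $C$ to depend on $M$ and $g$, not merely on $n$ and the Ricci bound $a$. Since $M$ is compact this is a weaker assertion than the scale-invariant Buser inequality you are sketching, and for the paper's application it is the \emph{rescaling} argument in the proof of Proposition~\ref{lem:pi_rem} that restores uniformity across remote balls. Your final paragraph touches on this, but it is worth being explicit that your sketch actually delivers a stronger, scale-invariant constant $C = C(n, ar^2)$, which is more than what is literally asserted.
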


We apply this as follows. 
\begin{proposition}\label{lem:pi_rem}
Fix the weighted QAC space $(Z,g, d\mu_{a,b}) \in \sQ_k$ and the remote parameter $c$. Suppose that we have verified 
$(PI)_{\mu,\delta}$ on all component spaces $Z^{(j)}$ with some parameter $\delta \in (0,1]$ in \eqref{eq:poincare}. 
Then there exists $C_P >0$ and a new $\delta_1 \in (0,1]$ so that for all $p\in Z$ and all $r\leq c \rho_k(p)$ we have
\begin{equation}\label{eq:pi_rem}
\int_{B(p,r)} |f- \bar{f}|^2 \, d\mu_{a,b} \leq C_P \, r^{2} \int_{B(p, \delta_1^{-1}r)} |\nabla f|^2 \, d\mu_{a,b}, 
\end{equation}
for every $f\in H^1_{\mathrm{loc}}(Z)$. 
\end{proposition}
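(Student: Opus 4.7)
I would argue by induction on the depth $k$. The base case $k=0$ is immediate: on an AC space, a remote ball $B(p,r)$ with $r \leq c\rho_0(p)$ satisfies $\rho_0 \asymp \rho_0(p)$ by Lemma~\ref{lem:weight-const}, so $d\mu_{a,0} \asymp \rho_0(p)^a\, dV_g$ throughout the ball. Rescaling the metric by $r^{-1}$ brings $B(p,r)$ to the unit scale on a space with uniformly bounded Ricci curvature and injectivity radius, and Proposition~\ref{ChenLi} delivers the unweighted Poincar\'e inequality; absorbing the constant factor $\rho_0(p)^a$ gives \eqref{eq:pi_rem}.

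For the inductive step, assume \eqref{eq:pi_rem} has been established on every component QAC space of depth less than $k$ for all weight exponents satisfying \eqref{eq:cVC}. Given $p\in Z$ and $r \leq c\rho_k(p)$, Lemma~\ref{lem:rb} places $B(p, c\rho_k(p))$ inside some thickened piece $Z^{(k)}_{(j)}(\eta)$. If $j=0$, Lemma~\ref{lem:weight-const} makes every weight $w_i^{(k)}$ and $\rho_k$ essentially constant on $B(p,r)$, reducing to the base case. If $j\geq 1$, Lemma~\ref{lem:m} identifies $Z^{(k)}_{(j)}(\eta)$, up to quasi-isometry, with an open set in the Riemannian product $C(S_j)\times Z^{(j-1)}$ with metric $d\rho_k^2+\rho_k^2\kappa_{S_j}+g^{(j-1)}$. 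Writing $z=(q_j,z_{j-1})$ and using \eqref{eq:w-Zkj-t} together with $\rho_k\asymp \rho_k(p)$ on the ball, the measure factors as
\[
d\mu_{a,b} \;\asymp\; \rho_k(p)^{a-|b(j)|}\,dV_{C(S_j)}\otimes d\mu^{(j-1)}_{|b(j)|,\,b(j-1)}(z_{j-1}).
\]
The constraint \eqref{eq:cVC} for $(a,b)$ translates exactly into the corresponding constraint for the weights $(|b(j)|,b(j-1))$ on $Z^{(j-1)}$, so the inductive hypothesis applies.

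The bulk of the argument is then a product Poincar\'e inequality. Choose constants $0<\alpha\leq\beta$, depending only on the quasi-isometry constant of Lemma~\ref{lem:m}, so that
\[
B(q_j,\alpha r)\times B(z_{j-1},\alpha r) \;\subset\; B(p,r) \;\subset\; B(q_j,\beta r)\times B(z_{j-1},\beta r),
\]
with the analogous inclusion for $B(p,\delta_1^{-1}r)$. Since $r\leq c\rho_k(p)\asymp c|q_j|$, the ball $B(q_j,\beta r)$ is remote in the cone $C(S_j)$ over a fixed compact manifold, and after rescaling by $r^{-1}$ has bounded geometry, so Proposition~\ref{ChenLi} yields the unweighted Poincar\'e inequality on it. On the $Z^{(j-1)}$-factor, the inductive hypothesis (applied in the full form $(PI)_{\mu,\delta}$ on \emph{all} balls, whether or not remote) supplies the weighted Poincar\'e inequality on $B(z_{j-1},\beta r)$. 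I combine them in the classical way: decomposing
\[
f-\bar{f} = \bigl(f-\langle f\rangle_{\mathrm{fib}}\bigr) + \bigl(\langle f\rangle_{\mathrm{fib}}-\bar{f}\bigr),
\]
where $\langle f\rangle_{\mathrm{fib}}(q_j)$ denotes the fiberwise $\mu^{(j-1)}$-average of $f(q_j,\cdot)$, I apply the fiberwise Poincar\'e inequality to the first term and the base Poincar\'e inequality to the second (using Jensen to bound $|\nabla_{q_j}\langle f\rangle_{\mathrm{fib}}|^2$ by the fiberwise average of $|\nabla_{q_j}f|^2$). Integrating against the factored measure yields \eqref{eq:pi_rem}, with $\delta_1$ absorbing the quasi-isometry constants of Lemma~\ref{lem:m}, the inscribed/circumscribed ratio $\beta/\alpha$, and the expansion $\delta^{-1}$ inherited from the two inductive Poincar\'e inequalities.

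The main obstacle will be the uniform bookkeeping: ensuring that a single $\delta_1\in(0,1]$ works for all $p\in Z$ regardless of which stratum $Z^{(k)}_{(j)}(\eta)$ the maximal remote ball lies in, and handling the fact that the projected ball $B(z_{j-1},\beta r)$ in $Z^{(j-1)}$ may be anchored, remote, or intermediate. The latter is exactly the reason the inductive hypothesis is phrased as $(PI)_\mu$ on all balls rather than only on remote ones; once the overall induction driving Theorem~\ref{thm:main4} is complete, Remark~\ref{rem:jerison} (Jerison's covering argument) upgrades the resulting $\delta_1$ back to $1$.
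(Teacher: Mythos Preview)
Your proposal is correct and follows essentially the same route as the paper: induction on depth, localization of the remote ball into a thickened piece $Z^{(k)}_{(j)}(\eta)$ via Lemma~\ref{lem:rb}, passage to the product model $C(S_j)\times Z^{(j-1)}$ via Lemma~\ref{lem:m}, factorization of the measure using \eqref{eq:w-Zkj-t}, and then a product Poincar\'e inequality obtained by splitting off the fiberwise average. The paper's version sets $\bar f=0$, extends $f$ by zero to the circumscribed product box $B(q_j,r)\times B(p_{j-1},r)$, and writes $|f|^2\le |f-\bar f_2|^2+|\bar f_2|^2$; your inscribed/circumscribed box argument with the expansion absorbed into $\delta_1^{-1}$ is the more standard (and slightly cleaner) formulation of the same step, and you are right to flag that the projected ball $B(z_{j-1},\beta r)$ need not be remote in $Z^{(j-1)}$, so the full inductive $(PI)_{\mu,\delta}$ there is what is actually invoked.
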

\begin{proof}
We apply Proposition~\ref{ChenLi} on the various types of remote balls in $Z$, using the 
corresponding Poincar\'e inequality and/or a rescaling argument on each of these as needed. 
To simplify notation, assume that $\bar{f} = 0$.

We prove this statement by induction. If $k=0$ then we are on an AC space and~\eqref{eq:pi_rem} is true by rescaling and
\eqref{eq:poincare}.  Assume then that the statement is true for all QAC spaces of depth strictly less than $k$.

Let $p \in Z^{\d{k}}$. From Lemma~\ref{lem:rb}, there exists some $j \leq k$ such that $B(p, r) \subset Z^{\d{k}}_{\d{j}}(\eta)$ for 
all $0 < r\leq c\rho_k(p)$. If $j=0$, then our remote ball lies in an AC space and the statement is true.

If $B(p,r) \subset Z^{\d{k}}_{\d{j}}(\eta)$ for some $j \geq 1$, then consider $Z^{\d{k}}_{\d{j}}(\eta)$ 
locally as a product over a neighbourhood in $C(S_j)$ with fiber $Z^{\d{j-1}}$, with the product metric, 
and $B(p,r)$ as lying in the product $B(q_{j},r) \times B(p_{j-1}, r) \subset C(S_j) \times Z^{(j-1)}$. Extend $f$ by zero to the
rest of $B(q_j,r)\times B(p_{j-1},r)$; then the average of $f$ over this larger set is still zero. 
Note that $\rho_k$ is approximately constant over this set, so that if
we consider the measures  $dV_{C(S_j)}$ on $B(q_j,r)\subset C(S_j)$ and $d \mu_{|b(j)|, b(j-1)}$ on $B(p_{j-1}, r)\subset Z^{\d{j-1}}$, then  
\begin{multline*} 
\int_{B(p,r)} |f|^2 \, d\mu_{a,b} \preceq  \\
\rho_k(p)^{a-|b(j)|)}   \int_{B(q_j,r)} \int_{B(p_{j-1},r)} |f(q,z)|^2\, dV_{C(S_j)}(q) d\mu_{|b(j)|, b(j-1)}(z).
\end{multline*}
By induction, $\mbox{(PI)}_{\mu}$ holds on each of the two factors. Now write $\bar{h}_1$ and $\bar{h}_2$ for 
the average of any function $h$ over the balls $B(q_j,r)$ and  $B(p_{j-1},r)$, respectively.  The iterated partial averages of 
$f(y,z)$ satisfy $\overline{ (\bar{f}_2) }_1 = 0$, so integrating 
\[
|f(q,z)|^2 \leq  | f(q,z) - \bar{f}_2(q)|^2 + |\bar{f}_2(q)|^2.
\]
and using the Poincar\'e inequality on each of the two terms gives
\begin{multline*}
\int_{B(q_j,r)} \left( \int_{B(p_{j-1},r)} |f(q,z) - \bar{f}_2(q)|^2\,
  d\mu_{|b(j)|, b(j-1)}(z)\right) dV_{C(S_j)}(q) \\
+ \int_{B(p_{j-1},r)} \left( \int_{B(q_j,r)} |\bar{f}_2(q)|^2 \, dV_{C(S_j)}(q)\right) d\mu_{|b(j)|, b(j-1)}(z) \\
\leq C r^{2} \int_{B(q_j,\delta^{-1}r)} \int_{B(p_{j-1},\delta^{-1}r)} |\nabla_{q,z} f(q,z)|^2 \, d\mu_{|b(j)|, b(j-1)}(z) \, dV_{C(S_j)}(q).
\end{multline*}
\end{proof}

\section{Estimates on the Green function}\label{sec:Green}
Through the work in the last section, we have now verified all hypotheses of Theorem~\ref{thm:Hmuest}. 
Hence for the measure  $d\mu_{a,b} = \rho^a w^b  dV_{g} = (\rho_k)^a
(w^{\d{k}})^b dV_{g^{\d{k}}}$, where $a \in \RR$ and $b \in \RR^k$
satisfies~\eqref{eq:cVC}, i.e.\
\begin{equation*}
a + n \geq 0 \quad \text{and} \quad  |b(j)| + m_{j-1} \geq  0 \quad
\text{for all $j = 1, \ldots, k$}
\end{equation*}
Theorem~\ref{thm:heatL} and Theorem~\ref{thm:Gmuest} give
\begin{equation}
\begin{split}
|G_{\calL}(& z, z')| \preceq G_{\Delta+V}(z, z') \asymp \\ 
& (\rho_k(z) \rho_k(z') )^a (w^{\d{k}}(z) w^{\d{k}}(z'))^b \int_{d(z,z')}^\infty \frac{s}{\sqrt{\calV_k(z,s;a,b) \calV_k(z',s;a,b)}}\, ds.
\end{split}
\label{fundest5}
\end{equation}
This fundamental estimate is the basis for understanding the mapping properties of $\sL$. However, in order
to apply it, we must understand the behaviour of the volume function $\calV_k(z,r;a,b)$ better. Estimate~\eqref{eq:nrb} 
gives a good understanding of this for the nonremote balls, i.e\ for
the balls $B(z,r)$ with $r\geq c \rho_k(z)$. We now need something comparable 
for remote balls, i.e.\ the balls $B(z,r)$ with $0< r \leq c\rho_k(z)$, since the estimate for their volume in~\eqref{eq:c-rem} is not 
enough for our purposes.  According to the notation introduced in
Definition~\ref{defn:ra}, for this type of balls $\sV_k(z, r;a,b) =\sR_k(z, r;a,b)$.

\subsection{A sharp estimates on the volume of remote   balls}\label{sec:rem-sharp}
We need to estimate the measure of remote balls $\sR_k(p,r;a,b)$ in a way that does not
directly refer to the measures of balls in lower depth, as is the case
of the estimate~\eqref{eq:c-rem}. To do this, we introduce the concept of  a 
``remote chain'' at a point, designed to keep track of how complicated
the geometry of the QAC soace is near this point. 

Roughly speaking, the iterated structure of the QAC space makes every
point lie either in AC piece $Z^{\d{k}}_{\d{0}}$ or in a product of cones times
the AC piece of a lower depth QAC space.
By thickening the QAC pieces, we can choose these invariants
associated to a point so that all its remote balls also lie in this type of spaces. 

Concretely, we fix a remote parameter $c\in (0,1)$. To each point $z \in
Z^{\d{k}}$ we associate a {\em length} $s$ with $0\leq s < k$ and {\em remote chain of
length $s $}.  
When $s = 0$, the chain is empty and corresponds to the fact that
$z\in B(z, c\rho_k(z)) \subset Z^{\d{k}}_{\d{0}}(2c)$. When $s \geq 1$, it is a sequence of indices $k \geq j_1 > \ldots > j_s > 0$ 
and points $z_{j_\ell-1} \in Z^{\d{j_\ell-1}}$, $\ell=1, \ldots, s$, described as follows: Choose $j_1$ so that 
\[
  w^{\d{k}}_k(z) > 1 - 2c, \ldots, w^{\d{k}}_{j_1 +1} (z) > 1-2c,
  \quad \text{and} \quad w^{\d{k}}_{j_1}(z) < 1- 2c.
\]
By Lemma~\ref{lem:rb}, the maximal remote ball $B(z, c\rho_k(z))$ lies
in $Z^{\d{k}}_{\d{j_1}}(\eta)$, for $\eta>0$ depending on $c$ and $p$.
This is the total space of a bundle over $C(S_{j_1})$ with fiber $Z^{\d{j_1-1}}$. Thus $z$ is identified
with the pair $(q_{j_1}, z_{j_1-1}) \in C(S_{j_1}) \times Z^{\d{j_1-1}}$. (We are abusing notation in
the usual way by regarding this fibration as a product, which is legitimate since this ball lies
in a trivialized neighbourhood.)  We may then continue this process to choose the remaining
$j_i$, $i \geq 2$, or equivalently, regard the remote chain $j_2 > \ldots > j_s >1$ and points 
$z_{j_1-1} \in Z^{\d{j_1-1}}$ as being already determined by induction. 

From this definition, there is a chain of inequalities
\[
  w^{\d{k}}_{j_s}(z) < \ldots < w^{\d{k}}_{j_1}(z) < 1-2c.
\]
Moreover, by \eqref{eq:w-Zkj-t}, 
\begin{align*}
 w^{\d{k}}_{j_1+1}(z), \ldots, w^{\d{k}}_{k}(z) & \asymp 1, &\\
 w^{\d{k}}_{j_{\ell+1}+1}(z), \ldots, w^{\d{k}}_{j_{\ell}}(z) & \asymp
 \frac{\rho_{j_\ell-1}(z_{j_\ell-1})}{\rho_k(z)} & \text{for $1\leq \ell \leq s-1$},\\
 w^{\d{k}}_{1}(z), \ldots, w^{\d{k}}_{j_s}(z) & \asymp \frac{\rho_{j_s-1}(z_{j_s-1})}{\rho_k(z)}.
\end{align*}

We can now proceed to estimate the volume of remote balls. 
\begin{proposition}\label{prop:vrb}
Let $z \in Z^{\d{k}}$. 

(i) If the remote chain associated to $p$ has length $s=0$, then
\[
    \sR_k(z,r;a,b) \asymp \rho_k(z)^a r^n.
\]
 
(ii) If the remote chain associated to $p$  has length $s \geq 1$,
then, letting $j_\ell$ be the indices of the chain,
\[
   \sR_k(z,r;a, b) \asymp  \rho_k(z)^{a-|b(j_\ell)|} w^{\d{k}}(z)^{b- b(j_{\ell})}r^{n+|b(j_{\ell})|}
\]
provided 
\[
c\, w^{\d{k}}_{j_\ell}(z) \rho_k(z) \leq r \leq c\,   w^{\d{k}}_{j_{\ell-1}}(z)\rho_k(z).
 \]
This holds even when $\ell = 1$ or $s$ using the convention that $w^{\d{k}}_{j_{0}}(z) =1$,
$w^{\d{k}}_{j_{s+1}}(z) =0$, and $b(j_{s+1})=0$.
\end{proposition}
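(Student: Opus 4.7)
I will prove the proposition by induction on the depth $k$, with the base case $k = 0$ (an AC space, where the only possible chain has length $s = 0$) being immediate from standard volume comparison. Assume the result holds for all QAC spaces of depth strictly less than $k$.

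For the inductive step, fix $z \in Z^{(k)}$ with associated remote chain $k \geq j_1 > \cdots > j_s > 0$. When $s = 0$, the maximal remote ball $B(z, c\rho_k(z))$ lies inside $Z^{(k)}_{(0)}(2c)$, so $\rho_k$ is essentially constant and $w^{(k)} \asymp 1$ there, and Lemma \ref{lem:m} gives $\mathcal{R}_k(z,r;a,b) \asymp \rho_k(z)^a r^n$. When $s \geq 1$, Lemma \ref{lem:rb} places $B(z,r) \subset B(z, c\rho_k(z)) \subset Z^{(k)}_{(j_1)}(\eta)$, so Proposition \ref{lem:rem} applies:
\[
  \mathcal{R}_k(z,r;a,b) \asymp \rho_k(z)^{a-|b(j_1)|}\, r^{n-m_{j_1-1}}\, \mathcal{V}_{j_1-1}\!\left(z_{j_1-1}, r;\, |b(j_1)|, b(j_1-1)\right).
\]
The entire task is then to evaluate the factor $\mathcal{V}_{j_1-1}$ at $z_{j_1-1} \in Z^{(j_1-1)}$, whose own remote chain is precisely $j_2 > \cdots > j_s$ (of length $s-1$), and to translate the answer back to the variables on $Z^{(k)}$.

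The range condition $cw^{(k)}_{j_\ell}(z)\rho_k(z) \leq r \leq cw^{(k)}_{j_{\ell-1}}(z)\rho_k(z)$ determines which case of the ball $B(z_{j_1-1}, r)$ we are in relative to $\rho_{j_1-1}(z_{j_1-1}) \asymp w^{(k)}_{j_1}(z)\rho_k(z)$. When $\ell = 1$, this range forces $r \geq c\rho_{j_1-1}(z_{j_1-1})$, so $B(z_{j_1-1}, r)$ is \emph{non-remote} in $Z^{(j_1-1)}$, and Proposition \ref{corintballest} yields $\mathcal{V}_{j_1-1} \asymp r^{m_{j_1-1}+|b(j_1)|}$ (the required condition \eqref{eq:cVC} on the inductive parameters $|b(j_1)|$ and $b(j_1-1)$ is inherited from the original hypothesis). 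When $\ell \geq 2$, the ball is remote in $Z^{(j_1-1)}$ and lies at chain position $\ell-1$, so by the induction hypothesis
\[
  \mathcal{V}_{j_1-1}(z_{j_1-1}, r; |b(j_1)|, b(j_1-1)) \asymp \rho_{j_1-1}(z_{j_1-1})^{|b(j_1)|-|b(j_\ell)|}\, (w^{(j_1-1)}(z_{j_1-1}))^{b(j_1-1)-b(j_\ell)}\, r^{m_{j_1-1}+|b(j_\ell)|}.
\]

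The final step is a purely algebraic translation. Using $\rho_{j_1-1}(z_{j_1-1}) \asymp w^{(k)}_{j_1}(z)\rho_k(z)$ and the recursive identity $w^{(j_1-1)}_i(z_{j_1-1}) = w^{(k)}_i(z)/w^{(k)}_{j_1}(z)$ for $i \leq j_1-1$ from \eqref{eq:w-Zkj-t}, the combined exponent of $w^{(k)}_{j_1}(z)$ collapses to $|b(j_1)| - |b(j_1-1)| = b_{j_1}$, yielding the factor $(w^{(k)}(z))^{b(j_1) - b(j_\ell)}$. Since $w^{(k)}_i(z) \asymp 1$ for $i > j_1$, this is equivalent to $(w^{(k)}(z))^{b-b(j_\ell)}$; combining with the $\rho_k$ and $r$ powers from Proposition \ref{lem:rem} gives exactly $\rho_k(z)^{a-|b(j_\ell)|}(w^{(k)}(z))^{b-b(j_\ell)} r^{n+|b(j_\ell)|}$. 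The boundary cases are absorbed by the conventions $w^{(k)}_{j_0}(z) = 1$ and $b(j_0) = 0$ (which make $\ell = 1$ fit the general formula once one uses $(w^{(k)})^{b-b(j_1)} \asymp 1$) and by $w^{(k)}_{j_{s+1}}(z) = 0$ (which imposes no lower bound on $r$ in the deepest regime). The main bookkeeping difficulty is exactly this translation between the two weight systems; once the identities above are invoked carefully, everything lines up.
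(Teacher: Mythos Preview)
Your proof is correct and follows essentially the same route as the paper: reduce via Proposition~\ref{lem:rem} to a ball in $Z^{(j_1-1)}$, then split according to whether that ball is non-remote (invoke Proposition~\ref{corintballest}) or remote (invoke the induction hypothesis), and translate back using the recursive identities for $\rho_{j_1-1}$ and $w^{(j_1-1)}$. The only cosmetic difference is that you frame the induction on the depth $k$ while the paper frames it on the chain length $s$; since the passage to $z_{j_1-1}$ decreases both, the two framings are equivalent, and you spell out the algebraic bookkeeping at the end more explicitly than the paper does.
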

\begin{proof}
We have set things up so that this can be proved inductively on the length $s$.  
When $s=0$, $B(z, r) \subset  Z^{\d{k}}_{\d{0}}(\eta)$ for some $\eta \in (0,1)$ and every $0< r< c\rho_k(z)$.
Proposition~\ref{lem:rem} then gives the desired estimate.

If $s\geq 1$, then by Proposition~\ref{lem:rem}
\[
\sR_k(z,r; a, b) \asymp \rho_k(z)^{a-|b(j_1)|} r^{n-m_{j_1-1}} \sV_{j_1-1}(z_{j_1-1}, r;|b(j_1)|, b(j_1-1)) ,
\]
If $r > c \rho_{j_1-1}(z_{j_1-1}) = c\, w^{\d{k}}_{j_1}(z) \rho_k(z)$, then $B(z_{j_1-1}, r)$ is not remote in $Z^{\d{j_1-1}}$, so
the estimate follows from Proposition~\ref{corintballest}. On the other hand, when $r< c \rho_{j_1-1}(z_{j_1-1}) = c\,
 w^{\d{k}}_{j_1}(z) \rho_k(z)$, then $B(z_{j_1-1}, r)$ is remote in $Z^{\d{j_1-1}}$ and the estimate follows by the induction hypothesis.
\end{proof}

\subsection{An upper bound on the Green function}
We now use this information about the volumes of balls to bound the Green function. Returning to the expression 
\eqref{fundest5}, our immediate goal is to bound the integral
\[
\int_{d(z,z')}^{+\infty} \frac{s}{\sqrt{\sV_k(z,s;a,b) \sV_k(z',s;a,b)}} \, ds.
\]
The first step is to observe that since $s \geq d(z,z')$, we can replace this with the slightly simpler integral
\[
\II(z,z') = \int_{d(z,z')}^{\infty} \frac{s}{\sV_k(z,s;a,b)} \, ds.
\]
To show that these are equivalent, observe that $B(z',s) \subset B(z, s+ d(z,z')) \subset B(z, 2s)$, so by
volume doubling, $\sV_k(z',s;a,b) \leq C_D \sV_k(z, s;a,b)$. Interchanging the roles of $z$ and $z'$, we see
that $\sV_k(z,s;a,b) \asymp \sV_k(z',s;a,b)$ for $s \geq d(z,z')$. 

The ball $B(z,s)$ is nonremote when $s > c \rho_k(z)$; as such Proposition~\ref{corintballest} gives 
$\sV_k(z,s;a,b) \asymp s^{a+n}$ when $s$ is large,  and hence this integral converges provided $a+n >2$. 
Assuming this, then $\II(z,z') \asymp d(z,z')^{2-a-n}$ for $d(z,z') > c \rho_k(z)$. On the other hand, 
when $d(z,z') < c \rho_k(z)$ then $z'$ lies in a remote ball around $z$, and the estimate on $\II(z,z')$ 
depends on where $z'$ lies with respect the remote chain associated to $z$. 

\begin{lemma}\label{lem:II-est}
Assume that 
\begin{equation}\label{eq:eG}
a + n > 2 \quad \text{and} \quad  |b(j)| + m_{j-1} \geq  0 \quad
\text{for all $j = 1, \ldots, k$.}
\end{equation}
Let $z, z' \in Z^{\d{k}}$. 

(i) If $d(z,z') > c\rho_k(z)$, then
\[
\II(z,z') \asymp d(z,z')^{2-a-n}.
\]

(ii) If $d(z,z') < c\rho_k(z)$ and $z$ has a remote chain of length $s=0$, then
\[
\II(z,z') \asymp \rho_k(z)^{-a} d(z,z')^{2-n}.
\]

(iii) If $d(z,z') < c\rho_k(z)$ and $z$ has a remote chain $k \geq j_1
> \ldots > j_s >0$ of length $s\geq 1$, then
\[
  \II(z,z') \asymp \rho_k(z)^{-a+|b(j_\ell)|} w^{\d{k}}(z)^{-b+b(j_{\ell})}d(z,z')^{2-n-|b(j_{\ell})|}
\]
when 
\[
   c\, w^{\d{k}}_{j_\ell}(z) \rho_k(z) \leq d(z,z') \leq c\,   w^{\d{k}}_{j_{\ell-1}}(z)\rho_k(z).
 \]
This holds for all $1\leq \ell \leq s+1$,  with the convention that $w^{\d{k}}_{j_{0}}(z) =1$,
$w^{\d{k}}_{j_{s+1}}(z) =0$, and $b(j_{s+1})=0$.
\end{lemma}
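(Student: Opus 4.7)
The plan is to reduce all three cases to a single estimate
\[
\II(z,z') \;\asymp\; \frac{d(z,z')^{2}}{\sV_{k}(z,d(z,z');a,b)},
\]
from which the three stated asymptotics follow immediately by substituting the volume bound of Proposition~\ref{corintballest} in case~(i) and the two parts of Proposition~\ref{prop:vrb} in cases~(ii) and~(iii).

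To prove this reduction, we partition $[d(z,z'),\infty)$ according to the remote chain at $z$. Set $r_{0}=c\rho_{k}(z)$, $r_{\ell}=c\,w^{\d{k}}_{j_{\ell}}(z)\rho_{k}(z)$ for $1\leq\ell\leq s$, and $r_{s+1}=0$; then $[d(z,z'),\infty)$ is covered by at most $s+2$ subintervals $[r_{\ell'},r_{\ell'-1}]$ together with the nonremote tail $[r_{0},\infty)$. On each piece the volume function is comparable to a pure monomial in $s$: Proposition~\ref{prop:vrb} gives $\sV_{k}(z,s;a,b)\asymp A_{\ell'}s^{n+|b(j_{\ell'})|}$ on $[r_{\ell'},r_{\ell'-1}]$ with a prefactor $A_{\ell'}$ depending only on $z$, while Proposition~\ref{corintballest} gives $\sV_{k}(z,s;a,b)\asymp s^{n+a}$ on $[r_{0},\infty)$. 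Hypothesis~\eqref{eq:eG}, combined with $\dim S_{j_{\ell'}}\geq 1$, pushes every one of these exponents strictly above~$2$.

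On any such piece where $\sV_{k}(z,s;a,b)=Cs^{\gamma}$ with $\gamma>2$, an elementary computation gives
\[
\int_{A}^{B}\frac{s\,ds}{Cs^{\gamma}} \;\asymp\; \frac{A^{2-\gamma}}{C} \;=\; \frac{A^{2}}{\sV_{k}(z,A;a,b)} \qquad (A\leq B),
\]
since the antiderivative is dominated by its value at the lower endpoint. The function $\sV_{k}(z,\cdot;a,b)$ is continuous across every transition point $r_{\ell'-1}$, so expressing each piece's contribution as $r^{2}/\sV_{k}(z,r;a,b)$ evaluated at its lower endpoint produces a finite sequence which strictly decreases as one passes to larger radii (computed in the lower piece's monomial, the ratio of consecutive terms is $(r_{\ell'-1}/r_{\ell'})^{2-\gamma_{\ell'}}<1$). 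The full integral $\II(z,z')$ is therefore comparable to its first term, $d(z,z')^{2}/\sV_{k}(z,d(z,z');a,b)$, as claimed.

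The one subtlety is ensuring that every exponent in the piecewise volume strictly exceeds~$2$. A borderline case $n+|b(j_{\ell})|=2$ would produce a logarithmic factor on that piece and spoil the clean power laws in cases~(ii) and~(iii); these borderline cases are ruled out by the standing hypothesis~\eqref{eq:eG} together with the positive-dimensionality of each singular stratum $S_{j_{\ell}}$ appearing in the remote chain of $z$. Once this is in hand, the three asymptotics are obtained by direct substitution of the volume formulas into the reduction.
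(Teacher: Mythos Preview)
Your approach is correct and coincides with the paper's (whose proof is a one-line citation of Propositions~\ref{corintballest} and~\ref{prop:vrb}); you have simply made the integration explicit, and the unified reduction $\II(z,z')\asymp d(z,z')^{2}/\sV_{k}(z,d(z,z');a,b)$ is a clean organizing device.

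One minor correction to your last paragraph: the claim that $\dim S_{j_{\ell}}\geq 1$ together with \eqref{eq:eG} forces $n+|b(j_{\ell})|>2$ is not quite right. From $|b(j_{\ell})|+m_{j_{\ell}-1}\geq 0$ one obtains $n+|b(j_{\ell})|\geq n-m_{j_{\ell}-1}=\dim S_{j_{\ell}}+1$, so $\dim S_{j_{\ell}}\geq 1$ only yields $n+|b(j_{\ell})|\geq 2$, not strict inequality; and in any case a stratum may be zero-dimensional. The paper's own proof glosses over this borderline too. In the downstream applications (see the Remark following Lemma~\ref{lem:forSchur} and the hypotheses of Theorem~\ref{thm:Fredholm}) the stronger condition $|b(j)|+m_{j-1}\geq 2$ is imposed, which does supply the strict inequality your argument needs, so the lemma as stated is slightly loose at this edge but your reasoning is sound wherever the result is actually used.
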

\begin{proof}
 Part (i) follows from Proposition~\ref{corintballest}, while (ii) and
 (iii) follow from Proposition~\ref{prop:vrb}.
\end{proof}

Assembling all these estimates, we arrive at the 
\begin{theorem}\label{thm:gfe}
For any $(Z, g) \in \sQ_k$, let $h(z) = \rho_k(z)^{a/2}
w^{\d{k}}(z)^{b/2}$ for some $a \in \RR$, $b \in \RR^k$,  and set $V = 
-\Delta h/h$. Consider $G_{\Delta + V}$, as given by \eqref{Laplace}
for the generalized scalar Laplacian $\Delta+V$. 
If \eqref{eq:eG} is satisfied, i.e.\ $a+n > 2$ and $|b(j)| +  m_{j-1} \geq 0$ for $j \leq k$, then: 

\medskip 

(i) If $d(z,z') > c\rho_k(z)$, 
\[
G_{\Delta+V}(z,z') \asymp \rho_k(z)^{\frac{a}{2}}  w^{\d{k}}(z)^{\frac{b}{2}} \rho_k(z')^{\frac{a}{2}}  w^{\d{k}}(z')^{\frac{b}{2}} d(z,z')^{2-a-n}.
\]

(ii)  If $d(z,z') < c\rho_k(z)$ and $z$ has a remote chain of length $s=0$, then
\[
G_{\Delta + V}(z,z') \asymp \rho_k(z)^{-\frac{a}{2}}  w^{\d{k}}(z)^{\frac{b}{2}} \rho_k(z')^{\frac{a}{2}}  w^{\d{k}}(z')^{\frac{b}{2}} d(z,z')^{2-n}.
\]

(iii) If $d(z,z') < c\rho_k(z)$ and $z$ has a remote chain $k \geq j_1> \ldots > j_s >0$ of length $s\geq 1$, then
\[
  G_{\Delta + V}(z,z') \asymp \rho_k(z)^{-\frac{a}{2}+|b(j_\ell)|}  w^{\d{k}}(z)^{-\frac{b}{2}+b(j_{\ell})} \rho_k(z')^{\frac{a}{2}}  w^{\d{k}}(z')^{\frac{b}{2}} 
d(z,z')^{2-n-|b(j_{\ell})|}
\]
when 
\[
c\, w^{\d{k}}_{j_\ell}(z) \rho_k(z) \leq d(z,z') \leq c\,   w^{\d{k}}_{j_{\ell-1}}(z)\rho_k(z).
 \]
This holds for all $1\leq \ell \leq s+1$,  with the convention that $w^{\d{k}}_{j_{0}}(z) =1$,
$w^{\d{k}}_{j_{s+1}}(z) =0$, and $b(j_{s+1})=0$.
\end{theorem}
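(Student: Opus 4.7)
The plan is to assemble the Green function estimate by chaining together the ingredients already built up in the paper. With the choice $h(z) = \rho_k(z)^{a/2}w^{(k)}(z)^{b/2}$, the associated Doob measure is $d\mu = h^2\,dV_g = \rho_k^a(w^{(k)})^b\,dV_g = d\mu_{a,b}$. Condition \eqref{eq:eG} implies \eqref{eq:cVC}, so Theorem~\ref{thm:main4} furnishes $(VD)_\mu$ and $(PI)_\mu$ for $(Z,g,d\mu_{a,b})$. Theorem~\ref{thm:Gmuest} (combined with Theorem~\ref{thm:heatL}) therefore yields
\[
G_{\Delta+V}(z,z') \asymp h(z)h(z') \int_{d(z,z')}^{\infty} \frac{s}{\sqrt{\mu_{a,b}(B(z,s))\,\mu_{a,b}(B(z',s))}}\, ds.
\]

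The next step is to replace the symmetric volume product in the integrand by a single-point version. For $s \geq d(z,z')$, the inclusion $B(z',s) \subset B(z,2s)$ combined with $(VD)_\mu$ gives $\mathcal{V}_k(z',s;a,b) \asymp \mathcal{V}_k(z,s;a,b)$, so the integral is comparable to $\II(z,z') = \int_{d(z,z')}^{\infty} s/\mathcal{V}_k(z,s;a,b)\,ds$. Note here that the hypothesis $a+n>2$ is precisely what makes $\II(z,z')$ converge at infinity, since $\mathcal{V}_k(z,s;a,b) \asymp s^{a+n}$ for $s \gtrsim \rho_k(z)$ by Proposition~\ref{corintballest}. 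Thus we obtain
\[
G_{\Delta+V}(z,z') \asymp h(z)h(z')\,\II(z,z').
\]

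It remains to insert the three-case estimate for $\II(z,z')$ supplied by Lemma~\ref{lem:II-est} and expand $h(z)h(z') = \rho_k(z)^{a/2}w^{(k)}(z)^{b/2}\rho_k(z')^{a/2}w^{(k)}(z')^{b/2}$. In case (i) the product already gives the stated form. In case (ii) the extra factor $\rho_k(z)^{-a}$ coming from $\II$ shifts the exponents on $z$ from $(a/2, b/2)$ to $(-a/2, b/2)$. In case (iii) the extra factor $\rho_k(z)^{-a+|b(j_\ell)|}w^{(k)}(z)^{-b+b(j_\ell)}$ shifts them to $(-a/2+|b(j_\ell)|, -b/2+b(j_\ell))$, which matches the claim exactly. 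The exponents on $z'$ remain unchanged in all three cases.

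The one point that requires a brief check rather than pure bookkeeping is the apparent asymmetry between $z$ and $z'$ in cases (ii) and (iii) contrasted with the symmetry $G_{\Delta+V}(z,z') = G_{\Delta+V}(z',z)$. The asymmetry is only formal: when $d(z,z') < c\rho_k(z)$, the point $z'$ lies inside the maximal remote ball around $z$, so $z$ and $z'$ lie in the same thickened piece $Z^{(k)}_{(j)}(\eta)$ and share (up to comparability) the same remote chain; in particular $\rho_k(z)\asymp \rho_k(z')$ and $w^{(k)}_i(z)\asymp w^{(k)}_i(z')$ for every $i$, so the estimate is unchanged under the interchange of $z$ and $z'$. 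I do not expect any genuine obstacle beyond this bookkeeping, since the substantive analytic work, namely the volume doubling and Poincar\'e inequalities underlying Theorem~\ref{thm:Gmuest}, the sharp remote-volume bound in Proposition~\ref{prop:vrb}, and the integral evaluation in Lemma~\ref{lem:II-est}, is already in place.
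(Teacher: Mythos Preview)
Your proposal is correct and follows essentially the same route as the paper: the theorem is stated there as the direct assembly of the prefactor $h(z)h(z')$ with the three-case integral estimate of Lemma~\ref{lem:II-est}, after the same volume-doubling reduction from the symmetric product to $\II(z,z')$. Your added remark on the formal asymmetry in cases (ii) and (iii) is a reasonable sanity check, though the paper does not pause to make it explicit.
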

\begin{remark}
 In particular, for the scalar Laplaciam, i.e. when $a=0$  and $b=0$,
 the above gives
 \begin{equation}\label{eq:scG}
   G_{\Delta}(z,z') \asymp d(z,z')^{2-n}.
 \end{equation}
\end{remark}

\section{The Schur test}
We now use the estimates on $G_{\Delta +V}(z,z')$ from Theorem \ref{thm:gfe} to determine the values of the weight parameters 
$\delta \in \RR$, $\tau \in \RR^k$ such that the corresponding
integral operator 
\begin{equation}
G_{\Delta+V}\colon  \rho^{\delta+\frac{n}{2}-2 }w^{\tau + \frac{\nu}{2} -  \utwo}L^2(Z, dV_g)  \to\rho^{\delta+\frac{n}{2}}w^{\tau+\frac{\nu}{2}}H^2(Z, dV_g)
\label{fredtaudelta}
\end{equation}
given by this kernel is a bounded map. Here $\nu$ is the $k$-tuple with entries 
 $\nu_1 = m_0$, $\nu_j= m_{j-1}-m_{j-2}$ for $j \geq 2$ depending
solely on the dimensions $m_j$. It is chosen so that $|\nu (j)| =
m_{j-1}$, and its appeareance will become clear during this section.
We continue to omit the superscripts $(k)$ to keep the notation lighter. 

The boundedness of \eqref{fredtaudelta} is equivalent to the boundedness of 
\begin{equation}\label{eq:K}
\sK\colon L^2(Z,dV_g) \to L^2(Z, dV_g),
\end{equation}
given by the kernel
\begin{equation}
\sK(z,z') = \rho(z)^{-\delta -\frac{n}{2}}w(z)^{- \tau-\frac{\nu}{2}
}\ G_{\Delta+V}(z,z')\ \rho(z')^{\delta-2 + \frac{n}{2}} w(z')^{ \tau +\frac{\nu}{2}-\utwo}
\end{equation}
We approach this using the classical Schur test \cite{Halmos1978}, which states that if $f_1$ and $f_2$ are two positive 
measurable functions on $Z$ such that 
\[
\left|\int_{Z} \sK(z, z') f_1(z) dV_g(z)\right| \preceq f_2(z')  \quad \text{and} \quad 
\left| \int_{Z} \sK(z, z') f_2(z') dV_g(z') \right| \preceq f_1(z),
\]
for all $z, z' \in Z$, then \eqref{eq:K} is bounded. We shall take $f_1 = f_2 = \rho^{-n/2} w^{-\nu/2}$, and
the main task ahead is to estimate integrals of the form
\[
\int_{Z} G_{\Delta+V}(z,z')\ \rho(z')^{\alpha} w(z')^{\beta} \, dV_g(z') 
\]
with $\alpha \in \RR$ and $\beta \in \RR^k$.

\begin{lemma}\label{lem:forSchur}
If $a\in \RR$ and $b \in \RR^k$ satisfy~\eqref{eq:eG},  i.e.\ 
\[
a+n > 2\ \mbox{and}\ |b(j)| + m_{j-1} \geq 0, \quad j \leq k,
\]
and if $\alpha \in \RR$, $\beta \in \RR^k$ are chosen so that 
\begin{align}
- & n - \frac{a}{2} \leq \alpha< -2+\frac{a}{2} \label{eq:alpha} \\
- & m_{j-1} - \frac{|b(j)|}{2} \leq |\beta(j)|, \qquad   j \leq k, \label{eq:beta-1}\\
\mbox{and} \qquad & \ \beta \leq \frac12 b - \utwo
\label{eq:beta-2} 
\end{align} 
then
\[
\int_{Z} G_{\Delta+V}(z,z')\ \rho(z')^{\alpha}  w(z')^{\beta} \, dV_g(z')  \preceq \rho(z)^{\alpha + 2} w(z)^{\beta+\utwo} 
\]
for all $z \in Z$. 
\end{lemma}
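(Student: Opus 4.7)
The plan is to apply Fubini to turn the integral
\[
I(z) := \int_{Z} G_{\Delta+V}(z,z')\,\rho(z')^{\alpha}w(z')^{\beta}\,dV_g(z')
\]
into a single radial integral centered at $z$, and then estimate it by means of the volume estimates from Section~4. Setting $h(z)=\rho(z)^{a/2}w(z)^{b/2}$ and invoking the integral representation of $G_{\Delta+V}$ from Theorem~\ref{thm:Gmuest} together with the volume-doubling comparison $\sV_k(z,s;a,b)\asymp \sV_k(z',s;a,b)$ for $s\geq d(z,z')$, one obtains
\[
I(z) \preceq h(z)\int_{0}^{\infty}\frac{s\,\mu_{A,B}(B(z,s))}{\sV_k(z,s;a,b)}\,ds,
\]
where $A=a/2+\alpha$ and $B=b/2+\beta$. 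The hypotheses \eqref{eq:alpha} and \eqref{eq:beta-1} translate precisely into $A+n\geq0$ and $|B(j)|+m_{j-1}\geq0$ for all $j$, so the measure $d\mu_{A,B}$ satisfies \eqref{eq:cVC} and the estimates of Propositions~\ref{corintballest} and \ref{prop:vrb} apply to both the numerator and the denominator.

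The $s$-integral splits naturally at $s=c\rho(z)$. On the non-remote range $s\geq c\rho(z)$, Proposition~\ref{corintballest} yields $\mu_{A,B}(B(z,s))\asymp s^{A+n}$ and $\sV_k(z,s;a,b)\asymp s^{a+n}$, so the integrand is $\asymp s^{1+\alpha-a/2}$; the strict inequality $\alpha<-2+a/2$ from \eqref{eq:alpha} ensures integrability at infinity and gives a contribution of order $\rho(z)^{2+\alpha-a/2}$. Multiplying by $h(z)$ yields $\rho(z)^{\alpha+2}w(z)^{b/2}$, and since each $w_i\leq 1$ and $b/2-\beta-\utwo\geq 0$ by \eqref{eq:beta-2}, this is dominated by $\rho(z)^{\alpha+2}w(z)^{\beta+\utwo}$, which is the target bound on this piece.

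For the remote range $0<s<c\rho(z)$ I would use the remote chain of $z$ introduced in Section~\ref{sec:rem-sharp} to subdivide the interval of integration. If the chain has length zero, the ball $B(z,c\rho(z))$ lies in $Z^{(k)}_{(0)}(\eta)$, where $w(z)\asymp 1$, and the estimate reduces to the scalar AC case via Proposition~\ref{prop:vrb}(i). Otherwise, with chain $k\geq j_1>\cdots>j_s>0$, one integrates successively over the pieces $[c\,w_{j_\ell}(z)\rho(z),\,c\,w_{j_{\ell-1}}(z)\rho(z)]$; on each such piece Proposition~\ref{prop:vrb}(ii) gives both volumes as explicit monomials, so the integrand reduces to $s^{1+p_\ell}$ times a monomial in $\rho(z)$ and $w(z)$, where $p_\ell=|\beta(j_\ell)|-|b(j_\ell)|/2$. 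A case analysis on the sign of $p_\ell+2$ reduces the $s$-integral on each piece to its dominant endpoint value.

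The main technical obstacle is to verify that the contribution of each piece is uniformly $\preceq\rho(z)^{\alpha+2}w(z)^{\beta+\utwo}=\rho(z)^{\alpha+2}w_1(z)^{2}w(z)^{\beta}$. The key tool is the \emph{groupwise comparability} of weights built into the remote chain: for each $\ell$ and each index $i$ with $j_{\ell+1}<i\leq j_\ell$ one has $w_i(z)\asymp w_{j_\ell}(z)$, and on the innermost piece $w_1(z)\asymp w_2(z)\asymp\cdots\asymp w_{j_s}(z)$. This comparability collapses the apparent factor $w_{j_s}(z)^{2}$ produced by the innermost integration into $w_1(z)^{2}$, which is precisely the additional $\utwo$-decay encoded in the target. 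Combined with the lower bounds \eqref{eq:beta-1} on $|\beta(j)|$, which guarantee that each intermediate piece's contribution is no larger than that of the innermost piece, and the upper bound \eqref{eq:beta-2} on $\beta$, which absorbs the surviving $w_i(z)$-factors for indices $i>j_s$ into $w(z)^{\beta}$, the finite sum over pieces is dominated by its largest term and matches the desired bound.
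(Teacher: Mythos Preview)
Your Fubini reorganisation is correct and gives a genuinely cleaner path than the paper's argument. The paper splits the domain of integration in $z'$ into three regions --- $Z\setminus B(o,2\rho(z))$, $B(o,2\rho(z))\setminus B(z,c\rho(z))$, and the maximal remote ball $B(z,c\rho(z))$ --- and on each region invokes the already–integrated Green function estimates of Theorem~\ref{thm:gfe}. You instead swap the order of integration in the representation of Theorem~\ref{thm:Gmuest}, which collapses everything to the single radial integral $\int_0^\infty s\,\mu_{A,B}(B(z,s))/\sV_k(z,s;a,b)\,ds$ and bypasses Theorem~\ref{thm:gfe} entirely. This is an advantage: the two non-remote regions in the paper's proof are handled simultaneously by your single application of Proposition~\ref{corintballest}, and the bookkeeping on the remote pieces is identical in both approaches (indeed your expression $\rho(z)^{\alpha+2}w(z)^{\beta+\utwo}$ times $\prod_{i\leq j_\ell}(w_i/w_{j_\ell})^{b_i/2-\beta_i-2\delta_{i1}}$ is exactly what the paper obtains at the end of its proof).

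Two small points. First, your appeal to Proposition~\ref{prop:vrb} for the measure $\mu_{A,B}$ is legitimate precisely because you have checked \eqref{eq:cVC} for $(A,B)$; this is in fact the \emph{only} place \eqref{eq:beta-1} is used, so your closing remark that \eqref{eq:beta-1} ``guarantees that each intermediate piece's contribution is no larger than that of the innermost piece'' is not quite the right attribution --- the piecewise comparison follows from \eqref{eq:beta-2} alone (via $p_\ell\leq -2$ and the nonnegativity of each exponent $b_i/2-\beta_i$). Second, the ``case analysis on the sign of $p_\ell+2$'' is simpler than you suggest: for $1\leq\ell\leq s$ one always has $p_\ell\leq -2$ by \eqref{eq:beta-2}, so the lower endpoint dominates; only on the innermost piece $\ell=s+1$ is $p_\ell=0$ and the upper endpoint used. (The borderline $p_\ell=-2$ produces a logarithm which neither you nor the paper addresses explicitly; it can be absorbed by a slack in the exponent as usual.)
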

\begin{remark}
The conditions \eqref{eq:beta-1} and \eqref{eq:beta-2} together are slightly more restrictive than \eqref{eq:eG}. 
Indeed, \eqref{eq:beta-2} gives $|\beta(j)| \leq \frac12 |b(j)| - 2$, so using \eqref{eq:beta-1}, 
\begin{equation}
-m_{j-1} - \frac12 |b(j)| \leq \frac12 |b(j)| - 2 \Longrightarrow 2 \leq m_{j-1} + |b(j)|.
\label{eq:beta-3}
\end{equation}
This will be used in several places below. 
\end{remark}
\begin{proof}
Fix $z \in Z$, and decompose the region of integration into the three subdomains $Z\setminus B(o, 2\rho(z))$, 
$B(o, 2\rho(z))\setminus B(z, c\rho(z))$ and $B(z, c\rho(z))$, where the later is a maximal remote ball. 

On the first region, $d(z,z') \asymp \rho(z')$, so by Theorem~\ref{thm:gfe}(i), 
\begin{align*}
& \int _{Z\setminus B(o, 2\rho(z))} G_{\Delta+V}(z,z')\ \rho(z')^{\alpha} w(z')^{\beta} \, dV_g(z')\\
& \quad \preceq \rho(z)^{\frac{a}{2}}w(z)^{\frac{b}{2}} \int_{Z\setminus B(0, 2\rho(z))}  d(z,z')^{2-n-a}\rho(z')^{\frac{a}{2} + \alpha}  w(z')^{\frac{b}{2}+\beta} \, dV_g(z')\\
& \quad \preceq \rho(z)^{\frac{a}{2}}w(z)^{\frac{b}{2}} \int_{Z\setminus B(0, 2\rho(z))}\rho(z')^{2-n-\frac{a}{2} + \alpha}  w(z')^{\frac{b}{2}+\beta} \, dV_g(z') \\
& \quad = \rho(z)^{\frac{a}{2}}w(z)^{\frac{b}{2}} \int_{2\rho(z)}^\infty d\sA_k(\rho; 2-n-\frac{a}{2} + \alpha, \frac{b}{2} + \beta). 
\end{align*}
Using \eqref{volanch} and the inequalities
\[
2-\frac{a}{2} + \alpha < \min\left\{0,  m_{j-1} + \frac{|b(j)|}{2} + |\beta(j)|\right\}, 
\]
which follow from \eqref{eq:alpha} and~\eqref{eq:beta-1}, this is bounded by 
\[
\begin{split}
\rho(z)^{2+\alpha}w(z)^{\frac{b}{2}} \left(1 +\sum_{j=1}^k \rho(z)^{-m_{j-1}-\frac{|b(j)|}{2} -|\beta(j)|}\right) \\
\preceq \rho(z)^{2+\alpha}w(z)^{\frac{b}{2}} \leq \rho(z)^{\alpha + 2} w(z)^{\beta + \utwo};
\end{split}
\]
the first inequality here relies on \eqref{eq:beta-1} again, while the second one uses  \eqref{eq:beta-2} and the fact that each $w_i \leq 1$. 

On the second domain, $d(z,z') \asymp  \rho(z)$ (instead of  $\rho(z')$), so
\begin{align*}
& \int_{B(o,2\rho(z))\setminus B(z, c\rho(z))} G_{\Delta+V}(z,z')\ \rho(z')^{\alpha} w(z')^{\beta} \, dV_g(z')\\
& \quad \preceq \rho(z)^{2-n-\frac{a}{2}}w(z)^{\frac{b}{2}} \int_{B(o, 2\rho(z))}  \rho(z')^{\frac{a}{2} + \alpha}  w(z')^{\frac{b}{2}+\beta} \,dV_g(z').
\end{align*}
By~\eqref{volanch} again, this is bounded by 
\[
\rho(z)^{2-n-\frac{a}{2}}w(z)^{\frac{b}{2}}  + \rho(z)^{\alpha+2}w(z)^{\frac{b}{2}} \left(1 +  \sum_{j=1}^k \rho(z)^{-m_{j-1}-\frac{|b(j)|}{2} - |\beta(j)|}\right). 
\]
Using \eqref{eq:alpha} and~\eqref{eq:beta-1}, this  is bounded by $\rho(z)^{\alpha+ 2}w(z)^{\frac{b}{2}}$, and hence as before by 
$\rho(z)^{\alpha+ 2}w(z)^{\beta + \utwo}$. 

Observe that up until this point we have not seen the need for the full gain in the power of $w$ to $w^{\beta + \utwo}$. This only
appears in the last step, where we break up the integral over the maximal remote ball $B(z, c\rho(z))$ into further pieces determined
by the remote chain associated to $z$. 

If this remote chain has length $s=0$, then by Theorem~\ref{thm:gfe}(ii), 
\begin{align*}
& \int_{B(z, c\rho(z))} G_{\Delta+V}(z,z')\ \rho(z')^{\alpha} w(z')^{\beta} \, dV_g(z') \\
& \preceq \rho(z)^{-\frac{a}{2}}  w(z)^{\frac{b}{2}} \int_{B(z, c\rho(z))} d(z,z')^{2-n}\rho(z')^{\frac{a}{2}+\alpha} w(z')^{\frac{b}{2}+\beta} \, dV_g(z')\\
& \preceq \rho(z)^{-\frac{a}{2}}  w(z)^{\frac{b}{2}} \int_{0}^{c\rho(z)} r^{2-n}\ d\sR_k(z, r; \frac{a}{2}+\alpha,\frac{b}{2}+\beta).
\end{align*}
Since $r \mapsto \sR_k(z, r; \cdot, \cdot)$ is monotone, an integration by parts and Proposition~\ref{prop:vrb}(i) bound this by
\begin{equation*}\label{eq:3b-0}
\rho(z)^{2+\alpha} w(z)^{\frac{b}{2}} \leq \rho(z)^{2+\alpha} w(z)^{\beta+2}.
\end{equation*}

Finally, suppose that the remote chain $k \geq j_1 > \ldots > j_s >0$ for $z$ has length $s\geq 1$. Then by
Proposition~\ref{prop:vrb}(ii)
\begin{align*}
\int_{B(z, c\rho(z))} & G_{\Delta+V}(z,z')\  \rho(z')^{\alpha} w(z')^{\beta}\, dV_g(z')\\ 
& \preceq \sum_{\ell=1}^{s+1} \left| \rho(z)^{-\frac{a}{2} + |b(j_\ell)|} w(z)^{-\frac{b}{2}+b(j_\ell)} \right. \\
& \qquad \qquad  \left. \times\int_{B_{j_\ell-1} \setminus B_{j_\ell}} d(z,z')^{2-n-|b(j_\ell)|}\rho(z')^{\frac{a}{2} +\alpha} w(z')^{\frac{b}{2} + \beta} \, dV_g(z') \right|\\
& \preceq \sum_{\ell=1}^{s+1} 
             \left| \rho(z)^{-\frac{a}{2} + |b(j_\ell)|} w(z)^{-\frac{b}{2}+b(j_\ell)} \right. \\
& \qquad \qquad   \left. \times      \int_{c\,w_{j_{\ell}}(z)\rho(z)}^{c\,w_{j_{\ell-1}}(z)\rho(z)}
               r^{2-n - |b(j_\ell)|}\ d\sR_k(z, r; \frac{a}{2} + \alpha, \frac{b}{2}+\beta)\right|;
\end{align*}
here $B_{j_\ell-1} \setminus B_{j_\ell} = B(z, c\,w_{j_{\ell-1}}(z)\rho(z))\setminus B(z, c\,w_{j_{\ell}}(z)\rho(z))$.
By Proposition~\ref{prop:vrb}(ii), this is estimated by 
\begin{multline*}
\sum_{\ell=1}^{s+1} \rho(z)^{\alpha+  \frac{|b(j_\ell)|}{2}-|\beta(j_\ell)|} w(z)^{\frac{b(j_\ell)}{2} + \beta - \beta(j_\ell)}
\left| \int_{c\,w_{j_{\ell}}(z)\rho(z)}^{c\,w_{j_{\ell-1}}(z)\rho(z)} r^{1+ |\beta(j_\ell)|-\frac{|b(j_\ell)|}{2}}dr \right| \\
\preceq \sum_{\ell=1}^{s+1} \rho(z)^{\alpha + 2} w(z)^{\frac{b(j_\ell)}{2} + \beta - \beta(j_\ell)} \left|w_{j_{\ell}}(z)^{2+|\beta(j_\ell)|-\frac{|b(j_\ell)|}{2}} \right|. 
\end{multline*}
In the last step we used \eqref{eq:beta-2} and that $w_{j_\ell}(z) <w_{j_{\ell-1}}(z) \leq 1$. Each summand is therefore bounded by
$\rho(z)^{\alpha+2}w(z)^{\beta+\utwo}$ times 
\[
\begin{split}
\left(\prod_{i=2}^{j_\ell} w_i(z)^{\frac12 b_i - \beta_i}\right) w_1(z)^{\frac12 b_1 - 2 - \beta_1} 
w_{j_\ell}(z)^{2 + |\beta(j_\ell)| - \frac12 |b(j_\ell)|} \\
\prod_{i=2}^{j_\ell} \left( \frac{w_i(z)}{w_{j_\ell}(z)}\right)^{\frac12 b_i - \beta_i} \left( \frac{w_1(z)}{w_{j_\ell}(z)}\right)^{\frac12 b_1 -
\beta_1 - 2}
\end{split}
\]
The proof is finished by observing that $w_i(z) \preceq w_{j_\ell}(z)$ when $i \leq j_\ell$ while each exponent is nonnegative,
so this displayed expression is bounded. 
\end{proof}

We can now proceed directly to the analysis of the mapping \eqref{fredtaudelta}. 
\begin{theorem}\label{thm:Fredholm}
With all notation as above, assume (as in \eqref{eq:beta-3}) that 
\[
a+n > 2\ \mbox{and}\ |b(j)| + m_{j-1} \geq 2.
\]
Then \eqref{fredtaudelta} is a bounded mapping provided 
\[
2-n -\frac{a}{2} < \delta < \frac{a}{2} \quad \text{and} \quad 
\utwo-\nu-\frac{b}{2} \leq \tau \leq \frac{b}{2}.
\] 
\end{theorem}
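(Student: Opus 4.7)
The plan is to deduce the theorem from the classical Schur test, with Lemma \ref{lem:forSchur} as the main input. As noted just above the statement, the asserted boundedness of \eqref{fredtaudelta} is equivalent to boundedness on $L^2(Z, dV_g)$ of the kernel operator $\sK$ with the conjugated kernel displayed there. I will take the Schur weight $h(z) = \rho(z)^{-n/2} w(z)^{-\nu/2}$ in both directions of the test.

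For the first Schur inequality, $\int \sK(z,z')h(z')\,dV_g(z') \preceq h(z)$, cancellation of the $\rho^{\pm n/2}$ and $w^{\pm \nu/2}$ factors reduces the requirement to
\[
\int_Z G_{\Delta+V}(z,z')\,\rho(z')^{\delta-2} w(z')^{\tau-\utwo}\,dV_g(z') \preceq \rho(z)^{\delta} w(z)^{\tau},
\]
which is precisely Lemma \ref{lem:forSchur} applied with $\alpha = \delta - 2$ and $\beta = \tau - \utwo$. Its three hypotheses translate to $2-n-a/2 \leq \delta < a/2$; the cumulative inequality $2 - m_{j-1} - |b(j)|/2 \leq |\tau(j)|$ for each $j$ (which, using $|\nu(j)| = m_{j-1}$, is implied by the componentwise bound $\utwo - \nu - b/2 \leq \tau$); and $\tau \leq b/2$.

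For the second Schur inequality, $\int \sK(z,z') h(z)\,dV_g(z) \preceq h(z')$, I invoke the symmetry $G_{\Delta+V}(z,z') = G_{\Delta+V}(z',z)$ and reduce the requirement, again by cancellation, to
\[
\int_Z G_{\Delta+V}(z',z)\,\rho(z)^{-\delta-n} w(z)^{-\tau-\nu}\,dV_g(z) \preceq \rho(z')^{2-\delta-n} w(z')^{\utwo-\tau-\nu}.
\]
This is Lemma \ref{lem:forSchur} once more, with $\alpha = -\delta - n$ and $\beta = -\tau - \nu$; its hypotheses now become $2-n-a/2 < \delta \leq a/2$, the cumulative $|\tau(j)| \leq |b(j)|/2$ (implied by componentwise $\tau \leq b/2$), and $\utwo - \nu - b/2 \leq \tau$. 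Combining the two directions produces the strict inequalities on both sides in $\delta$ and the componentwise weak inequalities in $\tau$ that appear in the theorem.

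Because all the real analytic work has been carried out in Lemma \ref{lem:forSchur} (through the three-region split of $Z$ and the further subdivision of the maximal remote ball by the remote chain at $z$), the only genuine difficulty here is bookkeeping: one must verify that the shifts by $n/2$ and $\nu/2$ in the domain and target of \eqref{fredtaudelta} match exactly the shifts produced by the volume factors in the Schur estimates, and then check that the ``half-strict, half-weak'' character of the conditions \eqref{eq:alpha}--\eqref{eq:beta-2} recombines across the two directions of the Schur test into strict inequalities in $\delta$ and weak componentwise inequalities in $\tau$. The hard analysis is already behind us; what remains is the reconciliation of exponents.
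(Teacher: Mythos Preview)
Your proposal is correct and follows essentially the same approach as the paper: both apply the Schur test with the symmetric weight $\rho^{-n/2}w^{-\nu/2}$, invoke Lemma~\ref{lem:forSchur} with $(\alpha,\beta)=(\delta-2,\tau-\utwo)$ in one direction and $(\alpha,\beta)=(-\delta-n,-\tau-\nu)$ in the other, and then observe that the cumulative conditions \eqref{eq:beta-1} are implied by the componentwise bounds on $\tau$ via $|\nu(j)|=m_{j-1}$. Your explicit appeal to the symmetry of $G_{\Delta+V}$ in the second direction is a useful clarification that the paper leaves implicit.
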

\begin{proof}
As explained earlier, we apply the Schur test to the mapping associated to the kernel $\sK$ with the functions 
$f_1 = f_2 =  \rho^{-\frac{n}{2}}w^{-\frac{\nu}{2}}$. 
 
First, by Lemma~\ref{lem:forSchur}, 
\begin{align*}
& \qquad  \int_Z \sK(z,z') \rho(z')^{-\frac{n}{2}} w(z')^{-\frac{\nu}{2}} \, dV_g(z') \\
& = \rho(z)^{-\delta -\frac{n}{2}}w(z)^{ -\tau-\frac{\nu}{2}} \int_{Z} G_{\Delta+V}(z,z')\ \rho(z')^{\delta-2} w(z')^{\tau- \utwo}\, dV_g(z')\\
& \leq  \rho(z)^{-\frac{n}{2}} w(z)^{-\frac{\nu}{2}},
 \end{align*}
so long as $\delta-2$ satisfies inequality \eqref{eq:alpha},
\begin{equation}\label{eq:delta-1}
 -n-\frac{a}{2} \leq \delta-2 < -2 + \frac{a}{2},
\end{equation}
 and $\tau  - \utwo$ satisfies 
\eqref{eq:beta-1},  \eqref{eq:beta-2}. These inequalities state that 
\begin{equation}\label{eq:B-z}
-m_{j-1}- \frac{|b(j)|}{2} \leq |\tau (j)|-2,  \qquad \tau - \utwo  \leq \frac{b}{2} -\utwo 
 \end{equation}
for $j \leq k$.  

On the other hand,
\begin{align*}
& \qquad  \int_Z \sK(z,z') \rho(z)^{-\frac{n}{2}} w(z)^{-\frac{\nu}{2}}\, dV_g(z)\\
& = \rho(z')^{\delta-2 + \frac{n}{2}} w(z')^{\nu + \tau-\utwo} \int_Z
G_{\Delta+V}(z,z')\ \rho(z)^{-\delta -n}w(z)^{- \tau -\nu} \, dV_g(z)\\
& \leq \rho(z')^{-\frac{n}{2}} w(z')^{-\frac{\nu}{2}}
 \end{align*}
provided $-\delta-n$ satisfies
\begin{equation}\label{eq:delta-2}
 -n-\frac{a}{2} \leq -\delta -n < -2 + \frac{a}{2},
\end{equation}
while now $\tau$ must satisfy 
\begin{equation}\label{eq:B-z'}
-m_{j-1}- \frac{|b(j)|}{2} \leq -|\tau (j)|- |\nu(j)|,  \qquad -\tau - \nu  \leq \frac{b}{2} -\utwo 
\end{equation}
for $j \leq k$. 

We see that~\eqref{eq:delta-1} and~\eqref{eq:delta-2} combine to give
the inequality for $\delta$ in the statement of the theorem, while the
one for $\tau$ is given by the right-hand sides of~\eqref{eq:B-z}
and~\eqref{eq:B-z'}. Note that since $|\nu(j)|=m_{j-1}$, the left-hand sides of these
inequalities are then automatically satisfied.
\end{proof}

\section{Fredholm theorems}\label{fredthms}
In this final section we state and prove the main results of this paper.  Up until this point, we have been discussing
QAC spaces and metrics on them which satisfy a collection of structural hypotheses. In particular, we have
been supposing that $(Z, g) \in \sQ_k$ has only one end, so that it satisfies the condition (RCA), that the metric
$g$ be as in Lemma \ref{lem:m}, and that the generalized Laplacian $\sL = \nabla^* \nabla + \sR$ acts on a bundle $E$ 
over $Z$ where $\sR$ is a self-adjoint endomorphism of $E$ such that $\sR - V \!\cdot \mathrm{Id} \geq 0$,
where $V = -\Delta h/h$, $h = \rho^a w^b$.  Under all these hypotheses, we can apply Theorem \ref{thm:Fredholm}.
This shows that if $\delta$ and $\tau$ satisfy the inequalities in that theorem, then
\begin{equation}
\sL\colon \rho^{\delta+\frac{n}{2}} w^{\tau +\frac{\nu}{2}} H^2(Z; E)
\longrightarrow \rho^{\delta+\frac{n}{2}-2} w^{\tau + \frac{\nu}{2} - \utwo} L^2(Z; E)
\label{mF}
\end{equation}
is an isomorphism. 
Indeed, under these conditions, the Green function $G_{\sL}$ is a
bounded inverse to \eqref{mF}.

In this section we generalize this in two ways. First we explain that the conditions on $\sR$, $a$ and $b$
need only be satisfied near infinity, and furthermore, $Z$ may have any finite number of ends, although under
these weaker conditions, \eqref{mF} is only Fredholm, and may have nontrivial kernel and cokernel.  
The higher regularity analog of \eqref{mF} also holds.  We also state and prove an analog of this Fredholm
result for $\sL$ acting on weighted H\"older spacese over $Z$.

\begin{theorem}
\label{Sobthm}
Let $(Z,g) \in \sQ_k$ and let $\sL = \nabla^* \nabla + \sR$ be a generalized Laplacian acting on sections of
a bundle $E$ over $Z$. Suppose that there is some compact set $K_Z \subset Z$ such that on each
end $\sE_\ell$ of $Z$, i.e.\ each component of $Z \setminus K_Z$, there are weight parameter sets $a = (a_\ell)$,
$b = (b_\ell)$ such that $V = - \Delta (\rho^a w^b)/ \rho^a w^b$
satisfies 
\begin{equation}\label{eq:potential}
 V \!\cdot \mathrm{Id} \leq \sR \quad \text{on $\sE_\ell$.}
\end{equation}
Suppose further that each set $a_\ell, b_\ell$ satisfies all the conditions listed in Theorem \ref{thm:Fredholm}. 
Then for all $s \in \RR$, 
\begin{equation}
\sL\colon  \rho^{\delta+\frac{n}{2}} w^{\tau +\frac{\nu}{2}} H^{s+2}(Z; E)
\longrightarrow \rho^{\delta+\frac{n}{2}-2} w^{\tau + \frac{\nu}{2} - \utwo} H^s(Z; E)
\label{mF2}
\end{equation}
is Fredholm for all $\delta = (\delta_\ell)$ and $\tau=(\tau_\ell)$ satisfying 
\[
2-n -\frac{a_{\ell}}{2} < \delta_{\ell} < \frac{a_{\ell}}{2} \quad \text{and} \quad \utwo-\nu-\frac{b_{\ell}}{2} \leq \tau_{\ell} \leq \frac{b_{\ell}}{2}.
\] 
Here the weighted Sobolev spaces have weight pairs $\delta_\ell$, $\tau_\ell$ on the end $\sE_\ell$.
\end{theorem}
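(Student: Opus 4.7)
The plan is to reduce Theorem~\ref{Sobthm} to the single-end isomorphism already established in Theorem~\ref{thm:Fredholm} by means of a parametrix built end-by-end from Green functions, and then to bootstrap to all $s \in \RR$ via local elliptic regularity. The end-by-end approach is natural because the positivity condition $\sR \geq V\cdot\mathrm{Id}$, the weight pairs $(\delta_\ell,\tau_\ell)$, and the (RCA) property are all localized to individual ends, and because the Green function bounds of Theorem~\ref{thm:gfe} decay rapidly off-diagonal so that the different ends essentially do not interact.

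First, for each end $\sE_\ell$, I would construct an auxiliary one-ended QAC space $Z_\ell \in \sQ_k$ by capping off the interior boundary of $\sE_\ell$ with a compact manifold; the bundle $E$ and its connection are extended smoothly across the cap, and $\sR$ is replaced by an endomorphism $\tilde\sR_\ell$ equal to $\sR$ on $\sE_\ell$ and large enough on the cap that the pointwise bound $\tilde\sR_\ell - V\cdot\mathrm{Id} \geq 0$ holds on all of $Z_\ell$, with $V = -\Delta(\rho^{a_\ell}w^{b_\ell})/(\rho^{a_\ell}w^{b_\ell})$. Theorem~\ref{thm:Fredholm} then supplies a bounded two-sided inverse $G_{\tilde\sL_\ell}$ for $\tilde\sL_\ell$ on $Z_\ell$ with the weight pair $(\delta_\ell,\tau_\ell)$. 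Choose next a smooth partition of unity $1 = \chi_0 + \sum_{\ell \geq 1}\chi_\ell$ where $\chi_0$ has compact support containing $K_Z$, each $\chi_\ell$ is supported in $\sE_\ell$ and equal to $1$ away from $K_Z$, together with cutoffs $\psi_\ell$ satisfying $\psi_\ell\chi_\ell = \chi_\ell$ and $\supp\psi_\ell \subset \sE_\ell$. Set
\[
Pf = \sum_{\ell\geq 1}\psi_\ell\,G_{\tilde\sL_\ell}(\chi_\ell f).
\]
Since $\sL = \tilde\sL_\ell$ on $\supp\psi_\ell$, a direct computation gives
\[
\sL P = \mathrm{Id} - \chi_0 + R, \qquad R = \sum_{\ell\geq 1}[\sL,\psi_\ell]\,G_{\tilde\sL_\ell}\,\chi_\ell,
\]
where each $[\sL,\psi_\ell]$ is a first-order operator with smooth coefficients supported in a compact annular region inside $\sE_\ell$. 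A symmetric construction produces a left parametrix $P'$ with $P'\sL = \mathrm{Id} - \chi_0' + R'$ of the same structural type.

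The Fredholm property of \eqref{mF2} then follows once $\chi_0$, $R$ and $R'$ are shown to be compact on the weighted Sobolev spaces appearing in \eqref{mF2}. Lemma~\ref{lem:weight-const} and the bounded geometry remark following it provide a uniformly locally finite cover of $Z$ by geodesic balls of a fixed radius, on each of which the weight functions $\rho, w$ are comparable to constants and the coefficients of $\sL$ admit uniform bounds. On such balls the weighted Sobolev norm is equivalent, up to a constant depending on the ball, to the unweighted norm, so Rellich compactness applied patchwise, combined with the compact support of $\chi_0$ and of the coefficients of $[\sL,\psi_\ell]$, yields the required compactness from $H^{s+1}$ into $H^s$ against the weighted norms. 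This establishes the Fredholm property at $s=0$. For general $s \in \RR$, local elliptic regularity on each bounded-geometry chart upgrades the conclusion: if $\sL u = f$ with $u$ in the weighted $L^2$ target of \eqref{mF} and $f$ in the weighted $H^s$ target of \eqref{mF2}, then $u$ lies in the weighted $H^{s+2}$ domain with uniform estimates; negative $s$ is handled by duality with the formal adjoint of $\sL$, which has the same structural form and whose allowable weights are the Hilbert-space duals of those for $\sL$.

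The main obstacle will be the bookkeeping required to verify compactness of the error operators $R$ and $R'$ between the specific weighted Sobolev spaces in \eqref{mF2}. The gain of one order of smoothness from $G_{\tilde\sL_\ell}$, combined with the loss of one order from the commutator $[\sL,\psi_\ell]$, leaves no net Sobolev gain, so compactness must instead come from the compact support of the commutator coefficients via Rellich; carrying this out uniformly in the weights requires exactly the local comparability of $\rho, w$ on balls of fixed radius established in Lemma~\ref{lem:weight-const}, together with the fact that $G_{\tilde\sL_\ell}$ maps weighted Sobolev spaces of every order (obtained a posteriori from the $s=0$ case plus interior elliptic regularity on $Z_\ell$).
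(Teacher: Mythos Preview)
Your overall strategy---build a parametrix by patching end-by-end Green operators and show the remainders are compact---is exactly the paper's approach, and your variant of capping off each end to a complete one-ended QAC space (rather than the paper's choice of imposing Dirichlet conditions on the end with boundary) is a perfectly legitimate alternative.

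There is, however, a genuine gap. Your parametrix $P = \sum_{\ell\geq 1}\psi_\ell G_{\tilde\sL_\ell}\chi_\ell$ omits the compact interior piece entirely, and this produces the identity $\sL P = \mathrm{Id} - \chi_0 + R$ in which $\chi_0$ must be shown compact as a map from the \emph{target} space $\rho^{\delta+\frac{n}{2}-2}w^{\tau+\frac{\nu}{2}-\utwo}H^s$ to itself. But multiplication by a compactly supported smooth function is bounded, not compact, between Sobolev spaces of the \emph{same} order: there is no regularity gain available to invoke Rellich. Your sentence ``yields the required compactness from $H^{s+1}$ into $H^s$'' does not apply here, because nothing in the composition $\sL P$ feeds $\chi_0$ an input with an extra derivative. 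The same problem afflicts the left parametrix identity.

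The fix is standard: include an interior term. Either add a local elliptic parametrix $P_0$ for $\sL$ over a neighbourhood of $K_Z$ (any pseudodifferential parametrix will do, since the region is compact) and set $P = \psi_0 P_0\chi_0 + \sum_{\ell\geq 1}\psi_\ell G_{\tilde\sL_\ell}\chi_\ell$; the error from the $\ell=0$ term is then smoothing on a compact set and hence compact. This is exactly what the paper does (its sum over $\ell$ implicitly includes the $\ell=0$ piece). With that correction your argument goes through. One smaller point: you write that $G_{\tilde\sL_\ell}$ gives a gain of one order; it gives two, so the commutator term $R$ actually has a net gain of one Sobolev order into a compactly supported space, and Rellich applies directly---you were worrying about a non-issue there.
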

\begin{proof} We first prove the statement for $s=0$.

We assume that each $\sE_\ell$ is a manifold with compact boundary. Consider the operator $\sL$ on 
$\sE_\ell$ with Dirichlet boundary conditions; this is the self-adjoint operator associated by the Friedrichs
extension to the semibounded quadratic form $||\nabla u||^2 + \langle \sR u, u \rangle$. 
All arguments in the earlier part of this paper go through unchanged, so that using the heat kernel
we can construct the exact inverse for each of these operators. These are represented by Green functions $G_\ell(z, z')$.

Without loss of generality, we can take $K_Z$ a little bigger so that
the intersection with each of the ends $\sE_l$ is non-empty.
Now choose a partition of unity $\{\chi_\ell\}$, with $\ell$ indexing the ends, and with $\ell = 0$ corresponding
to the set $K_Z$; thus  each $\chi_\ell$ is a smooth nonnegative function supported in $\sE_\ell$, and $\sum \chi_\ell = 1$.
Also choose smooth nonnegative functions $\tilde{\chi}_\ell$ with
slightly larger support in $\sE_l$ so that 
$\tilde{\chi}_\ell = 1$ on the support of $\chi_\ell$. Using this
data, define the convolution kernel
\begin{equation}
\tilde{G}(z,z') = \sum_{\ell} \tilde{\chi}_\ell(z) G_\ell(z,z') \chi_\ell(z').
\label{param}
\end{equation}
Clearly, for each $z, z'\in Z$ we have
\[
\begin{split}
\sL_z\tilde{G} (z,z') & = \sum_\ell  \left( \tilde{\chi}_\ell(z) \sL_z  G_\ell(z,z') \chi_\ell(z')  + [ \sL_z, \tilde{\chi}_\ell](z) G_\ell(z,z') \chi(z')\right) \\
& = \sum_\ell \tilde{\chi}_\ell(z) \delta(z-z') \chi_\ell(z') + \sum_\ell [ \sL_z, \tilde{\chi}_\ell](z) G_\ell(z,z') \chi(z') \\
& = \delta(z-z') - \tilde{R}_1(z,z').
\end{split}
\]
Since the supports of $\nabla \tilde{\chi}_\ell$ and $\chi_\ell$ are disjoint, $\tilde{R}_1(z,z')$ is a smooth section. Furthermore,
since the integral operator $G_\ell$ corresponding to $G_\ell(z,z')$ acts on $\rho^{a-\utwo} w^{b-\utwo}L^2(Z,E)$, the same
is true for the integral operators $\tilde{G}$ and $\tilde{R}_1$
associated to the kernels $\tilde{G}(z,z')$ and $\tilde{R}_1(z,z')$,
giving bounded operators
\[
  \tilde{G}, \tilde{R}_1\colon \rho^{\delta+\frac{n}{2}-2} w^{\tau +\frac{\nu}{2}-\utwo} L^{2}(Z; E) \longrightarrow \rho^{\delta+\frac{n}{2}} w^{\tau + \frac{\nu}{2}} H^2(Z; E).
\]
Moreover for any $f$ which is also $\sC^{\infty}(Z,E)$, $\tilde{R}_1
f$ is in $\calC^\infty_0(Z,E)$. Thus $\tilde{R}_1$ is a compact
operator and
\[
  \sL\circ \tilde{G}= \text{Id} - \tilde{R}_1.
\]
On the other hand, at each $z\in Z$
\[
\begin{split}
 (\tilde{G}\circ \sL)(u)(z) 
& = \int_Z \tilde{G}_1(z,z') (\sL u) (z')\ dV_{g}(z')\\
&= \sum_{\ell} \tilde{\chi}_\ell(z) \int_Z G_\ell(z,z')\chi_\ell(z')(\sL u)(z')\ dV_{g}(z')\\
& = u(z) - \sum_\ell\int_Z G_{\ell}(z,z') [\sL,\chi_\ell] u(z')\ dV_{g}(z')\\
& = u(z) - \int_Z \tilde{R}_2(z,z') u(z')\ dV_g(z').
\end{split}
\]
Thus
\[
  \tilde G \circ \sL = \text{Id} - \tilde R_2.
\]
As before, the integral operator $\tilde R_2$ corresponding to the
kernel $\tilde R_2(z,z')$ is a bounded operator
\[
  \tilde{R}_2\colon \rho^{\delta+\frac{n}{2}-2} w^{\tau +\frac{\nu}{2}-\utwo} L^{2}(Z; E) \longrightarrow \rho^{\delta+\frac{n}{2}} w^{\tau + \frac{\nu}{2}} H^2(Z; E).
\]
However, it no longer maps smooth sections into smooth compactly
supported sections, since the kernel $\tilde{R}_2(z,z')$ is
compactly supported only in the $z'$ variable. But its adjoint is
compact, for the same reason that $\tilde{R}_1$ is compact, so it too
must be compact.

We have now produced an approximate inverse for the mapping
\eqref{mF2} when $s=0$, i.e.\ an inverse up to compact errors,
which proves that in this case \eqref{mF2} is Fredholm. A standard argument (e.g.\ commuting with powers of $\sL$ to 
handle $s$ a positive even integer, then using interpolation and duality to handle all other $s \in \RR$) proves
that \eqref{mF2} is Fredholm for all $s$.
\end{proof}

Estimates for this parametrix lead quickly to the corresponding result on weighted H\"older spaces.
\begin{theorem}
\label{Holdthm}
With all notation exactly as in Theorem \ref{Sobthm}, the mapping
\begin{equation}
\sL\colon \rho^{\delta} w^{\tau} \calC^{s+2,\gamma}_g(Z, E) \longrightarrow \rho^{\delta-2} w^{\tau-\utwo} \calC^{s,\gamma}_g(Z,E)
\label{mF2a}
\end{equation}
is Fredholm for all $s$ nonnegative integers and $\gamma\in (0,1)$ provided $\delta$ and $\tau$ satisfy the same 
inequalities as in Theorem~\ref{thm:Fredholm}. 
\end{theorem}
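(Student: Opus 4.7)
The plan is to recycle the parametrix $\tilde{G}$ constructed in the proof of Theorem~\ref{Sobthm}, but now analyze its mapping properties between the weighted H\"older spaces instead of the weighted Sobolev spaces. As with the Sobolev result, one will obtain approximate left and right inverses for $\sL$ acting on $\rho^{\delta}w^{\tau}\calC^{s+2,\gamma}_g(Z,E)$, with the error terms being compact, and the Fredholm conclusion follows.

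First I would establish the $C^0$-boundedness of the parametrix. Given $f\in \rho^{\delta-2}w^{\tau-\utwo}\calC^{0,\gamma}_g(Z,E)$ with H\"older norm $1$, the pointwise bound $|f(z')|\preceq \rho(z')^{\delta-2}w(z')^{\tau-\utwo}$ together with the Green function bounds from Theorem~\ref{thm:gfe} and the integral estimate of Lemma~\ref{lem:forSchur} (applied with $\alpha=\delta-2$, $\beta=\tau-\utwo$) give $|\tilde Gf(z)|\preceq \rho(z)^{\delta}w(z)^{\tau}$. The arithmetic restrictions on $\delta,\tau$ in the theorem are exactly those required by Lemma~\ref{lem:forSchur}.

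Next, to control the derivatives of $u=\tilde Gf$ and their H\"older seminorms, I would appeal to interior Schauder estimates on balls of fixed small radius. By Lemma~\ref{lem:weight-const} and the bounded geometry remark following it, every $z\in Z$ has a geodesic ball $B(z,c)$ on which $\rho$ and each $w_j$ are essentially constant and on which the metric $g$ and the coefficients of $\sL$ have uniformly bounded derivatives. On such a ball the equation $\sL u = f -\tilde{R}_1(\text{stuff})$ (or equivalently $\sL u = f$ up to lower-order smooth corrections) combined with the standard Schauder estimate
\[
\|u\|_{C^{s+2,\gamma}(B(z,c/2))}\preceq \|\sL u\|_{C^{s,\gamma}(B(z,c))}+\|u\|_{C^0(B(z,c))}
\]
yields, after multiplying through by the approximately constant weight $\rho(z)^{-\delta}w(z)^{-\tau}$ (with appropriate shifts on the right-hand side matching $\rho^{\delta-2}w^{\tau-\utwo}$), the estimate
\[
\|u\|_{\rho^{\delta}w^{\tau}\calC^{s+2,\gamma}_g}\preceq \|f\|_{\rho^{\delta-2}w^{\tau-\utwo}\calC^{s,\gamma}_g}+\|u\|_{\rho^{\delta}w^{\tau}\calC^{0}_g},
\]
and the last term has already been controlled in the previous paragraph. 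Iterating this gives higher regularity for $s\geq 1$.

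Finally I would verify compactness of the remainders. Recall from~\eqref{param} that $\sL\tilde{G}=\mathrm{Id}-\tilde{R}_1$ where $\tilde{R}_1$ has a smooth kernel whose $z$-support lies in a compact set (since it is built from commutators $[\sL,\tilde\chi_\ell]$ supported in the overlaps of the cutoffs); thus $\tilde{R}_1$ maps any weighted H\"older space into $C^\infty_0(Z,E)$, and Arzel\`a–Ascoli gives compactness. For $\tilde{G}\sL=\mathrm{Id}-\tilde{R}_2$, the kernel of $\tilde{R}_2$ is compactly supported in $z'$ only, but its formal adjoint is of the same type as $\tilde{R}_1$, and the argument of Theorem~\ref{Sobthm} carries over to show $\tilde{R}_2$ is compact on weighted H\"older spaces. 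The existence of these two-sided approximate inverses up to compact errors yields the Fredholm property. The main technical obstacle is in making the rescaling argument of the second paragraph precise, i.e.\ verifying that on each ball $B(z,c)$ the constants in the Schauder estimate and in Lemma~\ref{lem:weight-const} are uniform in $z$; this reduces to the bounded geometry of $(Z,g)$, which was already noted as a consequence of Lemma~\ref{lem:weight-const}.
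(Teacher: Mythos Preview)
Your overall strategy matches the paper's: reuse the parametrix $\tilde G$, get the $C^0$ bound from Lemma~\ref{lem:forSchur}, upgrade to $C^{2,\gamma}$ by Schauder, and handle $\tilde R_1,\tilde R_2$ as before. The first and last steps are fine. The gap is in your Schauder step.

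You propose to use interior Schauder estimates on balls $B(z,c)$ of \emph{fixed} radius, relying on bounded geometry and on the fact that $\rho$ and $w$ are approximately constant on such balls. That argument would yield $|\nabla^j u(z)|\preceq \rho(z)^{\delta}w(z)^{\tau}$, but the weighted H\"older norm requires the stronger bound $|\nabla^j u(z)|\preceq \rho(z)^{\delta-j}w(z)^{\tau-\underline{j}}$ (see the definition of $\|\cdot\|_{\rho^\delta w^\tau \calC^{k,\gamma}}$ just after \eqref{seminorm}). Since $\rho w_1$ is unbounded on a QAC space (for instance $w_1\asymp 1$ on $Z^{(k)}_{(0)}$ while $\rho\to\infty$), the former bound does not imply the latter. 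Equivalently, multiplying the unweighted Schauder inequality by an approximately constant weight cannot produce the factor $(\rho w_1)^{-j}$ that the weighted norm demands on derivatives.

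The paper closes this gap by working on \emph{remote} balls $B(z,c\rho(z))$ rather than fixed-radius balls, and by rescaling. On $Z^{(k)}_{(0)}$ one dilates by $\alpha=\tfrac12 c\rho(z)$; on each $Z^{(k)}_{(j)}$ one rescales anisotropically, by $\alpha$ in the $C(S_j)$ direction and by $\alpha w^{(k)}_{j}(z)\asymp \rho_{j-1}(z_{j-1})$ in the $Z^{(j-1)}$ direction. After these rescalings the Schauder estimate is applied on a ball of unit size, and upon scaling back each $\nabla$ contributes a factor of $\rho^{-1}$ or $(\rho w_{j})^{-1}$, which, together with $w_1\preceq w_{j}$, produces exactly the required $\rho^{\delta-j}w^{\tau-\underline{j}}$ control. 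This anisotropic rescaling, tied to the decomposition $Z=\bigcup_j Z^{(k)}_{(j)}(\eta)$, is the missing ingredient in your argument.
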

\begin{proof} It is sufficient to prove the statement for $s=0$ since higher regularity follows from local Schauder estimates
and Lemma~\ref{lem:weight-const}. 

Defining $\tilde{G}$ as above, we must show that 
\begin{equation}
\tilde{G}\colon \rho^{\delta-2} w^{\tau - \utwo} \calC^{0,\gamma}_g (Z,E) \longrightarrow \rho^{\delta} w^{\tau} \calC^{2,\gamma}_g(Z, E)
\label{mF3}
\end{equation}
is bounded, and that the remainder terms $\tilde{R}_1$, $\tilde{R}_2$ are compact between the appropriate spaces. 

Thus fix $f \in \rho^{\delta-2} w^{\tau - \utwo} \calC^{0,\gamma}_g(Z,E)$ and set $u = \tilde{G} f$. The inequalities for $\delta-2$ 
and $\tau-\utwo$ match those for $\alpha$ and $\beta$ in Lemma~\ref{lem:forSchur}, which implies directly that 
$u \in \rho^\delta w^\tau L^{\infty}(Z,E)$, with 
\[
\|u\|_{\rho^{\delta}w^{\tau} \sC^{0}} \leq  C \|f\|_{\rho^{\delta-2}w^{\tau-\utwo}\sC^{0}}.
\]

Of course, $u \in \sC^{2,  \gamma}_{\text{loc}}(Z)$, but we must show that 
$|\nabla^{i} u| \preceq \rho^{\delta -i}w^{\tau -\underline{i}}$, $i=1,2$ and $[\nabla^2u]_{\gamma; B(z,c)}\preceq \rho^{\delta-2-  \gamma}
w^{\tau - 2 -\underline{\gamma}}$ on balls $B(z, \frac{1}{2}c)$, with constants uniform in $z$. The radius $c$ is the constant 
in Lemma~\ref{lem:weight-const}, and $[\nabla^2u]_{\gamma; B(z,\frac{1}{2}c)}$ is the unweighted H\"older seminorm 
on $B(z,\frac{1}{2}c)$.
For simplicity, assume that the operator is scalar, since the general case involves only a change of notation. 
Finally, we can assume that this constant $c$ is also a remote parameter, and $B(z, c\rho(z))$ is a remote ball.

Write $\tilde u = \rho^{-\delta} w^{-\tau} u$, $\tilde{f} = \rho^{-\delta} w^{-\tau} f$, so that 
$\tilde \sL \tilde u = \tilde f$, where $\tilde \sL = \sL - \rho^{-\delta} w^{-\tau}[\sL, \rho^{\delta}w^{\tau}]$.
Use the decompositions $Z = \bigsqcup_{j=0}^k Z^{\d{k}}_{\d{j}} = Z = \bigcup_{k=0}^k Z^{\d{k}}_{\d{j}}(\eta)$, where
the thickening parameter $\eta$ is chosen so that $B(z, c\rho(z)) \subset Z^{\d{k}}_{\d{j}}(\eta)$ when $z \in Z^{\d{k}}_{\d{j}}$.
We analyze each $Z^{\d{k}}_{\d{j}}$ separately. To keep track of the depth, we re-introduce sub/superscripts.

When $j=0$, then $w^{\d{k}}_{\ell} \asymp 1$ for all $\ell$, and $\rho_k(z') \asymp \rho_k(z)$ when $z' \in B(z,c\rho_k(z))$. 
Now rescale $\tilde u$ and $\tilde f$ by setting
\[
\bar u(r, y) = \tilde u( \alpha \, r, y), \quad \bar f (r,y) = \tilde f(\alpha \,r, y), 
\qquad \mbox{where}\quad \alpha = \frac{1}{2}c \rho_k(z), 
\]
which we regard as functions on the ball of radius $2$ around $\bar z = (\frac{2}{c}, y)$. 
The standard local Schauder estimate on $\{\rho_k(z) \leq \frac{2}{c} + c\}$ gives
\begin{equation}\label{eq:Schauder1}
\|\bar u\|_{\sC^{2,\alpha}(B(\bar z, 1))}  \leq C \left(\|\bar u\|_{\sC^{0} (B(\bar z, 2))} + \|\bar  f\|_{\sC^{0,\gamma}(B(\bar z, 2))} \right),
\end{equation}
where the constant $C$ is uniform in $z$.  Clearly $\|\bar u\|_{\sC^{0} (B(\bar z, 2))} \preceq \|u\|_{\sC_{\delta,\tau}^{0}(Z)} 
\leq \|f\|_{\sC^{0}_{\delta-2,\tau-\utwo}}$ and $\|\bar f\|_{\sC^{0,\gamma}(B(\bar z, 2))} \leq \|f\|_{\rho^{\delta}w^{\tau}\sC^{0,\gamma}} \leq
\|f\|_{\rho^{\delta-2}w^{\tau-\utwo}\sC^{0,\gamma}}$, so the terms on the right are uniformly bounded by $\|f\|_{\rho^{\delta-2}w^{\tau-\utwo}\sC^{0,\gamma}}$.
Since $\nabla_r v \asymp \rho_k \nabla_{\rho_k} \tilde u$, the estimates for $\nabla u$, $\nabla^2 u$ and $[\nabla^2 u]$ 
then hold on $B(z, \frac{1}{2}c\rho_k(z))$, and hence on the smaller ball $B(z, \frac{1}{2}c)$.

If $1\leq j \leq k$, then $Z^{\d{k}}_{\d{j}}$ is quasi-isometric to the product $C(S_j) \times Z^{\d{j-1}}$, so we assume that 
\[
B(z, c\rho_k(z)) \subset B(q_j, c\rho_k(z)) \times  B(z_{j-1}, c\rho_k(z)).
\]
Now define
\begin{align*}
 & \bar u (r, \sigma, t, \tau) = \tilde u (\alpha \, r,  \sigma, \alpha w^{\d{k}}_{\d{j-1}}(z)\, t, \tau), \\
 &   \bar f (r, \sigma, t, \tau) = \tilde f (\alpha\, r, \sigma, \alpha w^{\d{k}}_{j-1}(z)\, t, \tau), \qquad \alpha =  \frac{1}{2}c \rho_k(z),
\end{align*}
and consider these as living on a ball of radius $2$ in the region $\rho_{k}(z) \leq \frac{2}{c} + c$. Note that by \eqref{eq:w-Zkj-t}, 
$\rho_k(z)w^{\d{k}}_{j}(z)= \rho_{j-1}(z_{j-1})$ on $Z^{\d{k}}_{\d{j}}$.  

As before, apply the local Schauder estimate~\eqref{eq:Schauder1}.  As before, the right hand side of that estimate is 
uniformly bounded by $\|f\|_{\rho^{\delta-2}w^{\tau-\utwo}\sC^{0,\gamma}}$. Furthermore, $\nabla_r \asymp \rho_k\nabla_{\rho_k}$ and 
$\nabla_t \asymp \rho_{j-1} \nabla_{j-1}$, and in addition $\rho_{j-1} \leq \rho_k$, so we obtain uniform bounds for 
\[
  \sup_{z'\in B(z, \frac{1}{2}c\rho_k(z)} |\rho^{-\delta + i }  w^{-\tau} w_{j-1}^i\nabla^i u (z')|
\]
and
\begin{multline*}
\sup_{\substack{z', z''\in B(z, \alpha) \\z'\neq z''}}  \min(\rho_{j-1}(z_{j-1}'), \rho_{j-1}(z_{j-1}''))^{\gamma}\ \times  \\
  \frac{|\rho^{-\delta+2}w^{-\tau}w_{j-1}^2\nabla^2 u(z')-\rho^{-\delta+2}w^{-\tau}w_{j-1}^2\nabla^2 u(z'')|}{d(z',z'')^{\gamma}}.
\end{multline*}
Furthermore, on the smaller ball $B(z, \frac{1}{2}c) \subset B(z, \frac{1}{2}c \rho(z))$ the weight functions are comparable to
$\rho_k(z)$ and $w^{\d{k}}(z)$ and $w_1^{\d{k}} \preceq w_{j-1}^{\d{k}}$, which gives the desired estimate, i.e., that  
$u \in \rho^\delta w^\tau \calC^{k+2,\gamma}(Z,E)$ with an a priori bound.  A similar argument, as in the proof of 
Theorem~\ref{Sobthm}, shows that $\tilde{R}_1$, $\tilde{R}_2$ map into spaces of $\calC^\infty$ functions with 
appropriate decay, and are therefore compact. 

We have now shown that $\sL$ is invertible modulo compact operators, and hence is Fredholm between these
weighted H\"older spaces. 
\end{proof}

\section{Applications and relationship to previous work}
We have already explained in the introduction that the work of Joyce \cite{Joyce2000}, and our interest in finding
more flexible methods to generalize his linear results,  was a primary motivation for our work.  This section
is primarily devoted to a description of the precise relationship of our results to his.  We also describe
an important use of elliptic theory on QALE spaces carried out by Carron. Finally, we state a consequence of 
our work which is directly related to the result of Colding-Minicozzi and Li concerning
spaces of polynomially bounded harmonic functions.

\subsection{The work of Joyce}\label{ssec:Joyce}
To explain Joyce's work, we must first translate notation.  
 
In our setting, $(Z,g)$ is QAC of depth $k$; the cross-section $Y_0$ of its tangent cone at infinity 
$C(Y_0)$ is a smoothly stratified space of depth $k$ with iterated edge metric.  We use the radial 
functions $(\rho, w_k, \ldots w_)$, where $\rho$ is a smoothed radial function on the cone $C(Y_0)$.
The level sets $\{\rho = \mbox{const.}\}$ are resolution blowups $(Y, h_{1/\rho})$ of $(Y_0, h_0)$. 
The restrictions of the remaining radial functions $w_j$'s to each $(Y,h_{1/\rho})$ are 
smoothed versions of the distance functions $s_j$ to the strata of $(Y_0, h_0)$.

Joyce deals exclusively with QALE manifolds, where $Y_0 = S^{n-1}/G$ is a quotient
of the sphere by a finite subgroup $G$ of $\mathrm{SU}(n)$.  He uses only two radial
functions, $\rho$ and $\sigma$; his choice of $\rho$ is the same as ours, but his 
$\sigma$ is a smoothed distance function to the entire singular set in $C(Y_0)$. More precisely,
\[
\sigma = \min(\rho w_1, \ldots, \rho w_k), 
\]
but since $w_1 \preceq w_{2} \preceq \ldots \preceq w_k$, we have that $\sigma = \rho w_1$.
He then considers the weighted spaces $\rho^{\beta + \frac12 (n-m_0)} \sigma^{\gamma + \frac12 m_0}L^2$,
where $m_0$ is the dimension of the QAC space $Z^{(0)}$ corresponding to the lowest depth stratum
and $n = \dim Z$. 

To translate this to one of the spaces $\rho^{\delta + \frac{n}{2}} w^{\tau + \frac{\nu}{2}} L^2$ that we consider,
recall that $\tau \in \RR^k$ is a full $k$-tuple of weight parameters and $\nu$ is a dimensional shift vector,
\[
\nu_1 = m_0, \quad \text{and} \quad  \nu_j = m_{j-1}-m_{j-2} \quad \text{for $2\leq j \leq k$},
\]
where $m_j$ is the dimension of the QAC space $Z^{(j-1)}$ associated to the stratum $S_{j-1}$
(so $m_j$ is the codimension of $C(S_j)$ in $C(Y_0)$).   It is not hard to see that 
\[
\delta = \beta + \gamma, \quad \tau + \frac{\nu}{2} = (\gamma + \frac{m_0}{2}, 0 , \ldots, 0),
\]
and hence $\tau_1 = \gamma$ and $\tau_j = -\frac{\nu_j}{2}$, $j \geq 2$. 

\begin{theorem}{\cite[Theorem 9.5.7 and Corollary 9.5.2]{Joyce2000}}
Let $(Z,g)$ be a QALE space which is asymptotic to $\RR^n/G$ at infinity. Let $m_0$ denote
the codimension of the singular set of $\RR^n/G$. If 
\begin{equation}\label{eq:Joyce-range}
\delta < \tau_1, \quad  2-m_0 < \tau_1 < 0, \quad \frac{2-n}{2} - \lambda < \delta < \frac{2-n}{2} + \lambda,
\end{equation}
where $\lambda = \sqrt{ (\frac{n-2}{2})^2 +  \tau_1 (n-m_0)}$ (which is less than $(n-2)/2$ since $\tau_1< 0$), and 
$\tau_j = -\frac{\nu_j}{2}$ for $j \geq 2$, then 
\[
\Delta\colon \rho^{\delta + \frac{n}{2}} w^{\tau + \frac{\nu}{2}} H^2(Z) \longrightarrow \rho^{\delta + \frac{n}{2}-2} w^{\tau + \frac{\nu}{2}-\utwo} L^2(Z)
\]
is an isomorphism.
\end{theorem}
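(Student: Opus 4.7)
The plan is to derive Joyce's theorem by applying our main Fredholm result, Theorem \ref{Sobthm}, to the scalar Laplacian (so $\sR=0$), and then upgrading Fredholmness to isomorphism via separate kernel and cokernel vanishing arguments tailored to Joyce's range. Using the dictionary preceding the statement, the assumption $\tau_j=-\nu_j/2$ for $j\geq 2$ means that only $\delta$ and $\tau_1$ carry nontrivial information; consequently we seek parameters $a\in\RR$ and $b=(b_1,0,\ldots,0)$ such that the potential $V=-\Delta(\rho^a w_1^{b_1})/(\rho^a w_1^{b_1})$ is nonpositive, so that the hypothesis $\sR\geq V\cdot\mathrm{Id}$ of Theorem \ref{Sobthm} is satisfied.

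The first concrete step is to compute $V$ on the model end. By Lemma \ref{lem:m}, on the deepest stratum $Z^{\d{k}}_{\d{k}}$ the metric is locally quasi-isometric to a product $\RR^{n-m_0}\times(\RR^{m_0}/G)$ in which $\rho=(r^2+s^2)^{1/2}$ and $w_1=s/\rho$, with $r,s$ the Euclidean radii on the two factors. A direct computation using the identity $-\Delta h/h=|\nabla\log h|^2-\Delta\log h$ yields
\[
V=(a-b_1)(a+b_1+n-2)\,\rho^{-2}+b_1(b_1+m_0-2)\,s^{-2},
\]
so $V\leq 0$ outside a compact set is equivalent to the two inequalities $b_1(b_1+m_0-2)\leq 0$ and $a(a+n-2)\leq b_1(n-m_0)$. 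For any $(\delta,\tau_1)$ strictly inside Joyce's range, the conditions $2-m_0<\tau_1<0$ and $\delta(\delta+n-2)<\tau_1(n-m_0)$ allow one to pick $(a,b_1)$ satisfying both of these together with the strict inequalities $a>\max(2\delta,4-2n-2\delta)$, $b_1\geq\max(2\tau_1,4-2m_0-2\tau_1)$, and $b_1\leq 0$. This places $(\delta,\tau_1)$ in the Fredholm range of Theorem \ref{Sobthm}, yielding Fredholmness.

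To upgrade Fredholmness to isomorphism we verify that both the kernel and the cokernel are trivial on Joyce's range. For injectivity, suppose $\Delta u=0$ with $u\in\rho^{\delta+n/2}w^{\tau+\nu/2}H^2$; comparing $u$ to the barrier $\sigma^{\tau_1}=(\rho w_1)^{\tau_1}$ and using the asymmetric hypothesis $\delta<\tau_1$ to ensure $u=o(\sigma^{\tau_1})$ pointwise at infinity, a maximum principle argument on the complement of a large compact set forces $u\equiv 0$. For surjectivity, the formal $L^2$-adjoint of the mapping is again the Laplacian, now acting between weighted spaces with parameters $\delta'=2-n-\delta$ and $\tau_1'=2-m_0-\tau_1$; direct verification shows that the dual range also admits a barrier of the same form (with the roles of $\delta$ and $\tau_1$ effectively interchanged), and a parallel argument yields triviality of the cokernel. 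The main obstacle will be executing the maximum principle globally on the iterated stratified QALE geometry: the barrier $\sigma^{\tau_1}$ is only approximately harmonic, and the correction terms must be controlled uniformly across each piece $Z^{\d{k}}_{\d{j}}(\eta)$ of the thickened decomposition by a careful induction on depth, using the weight action estimates of Proposition \ref{lem:Lap}.
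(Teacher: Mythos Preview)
The paper does not give an independent proof of this statement; it is quoted as Joyce's result. The paper does two things with it: it briefly describes Joyce's own method --- construct a barrier $F\asymp\rho^\delta w_1^{\tau_1}$ with $\Delta F\geq\rho^{\delta-2}w_1^{\tau_1-2}$, then control solutions on an exhaustion by compact domains via the maximum principle --- and it observes that Joyce's conjectured larger range (and hence a fortiori the range in the theorem as stated) is covered by Theorem~\ref{thm:unweighted}. That theorem follows directly from the isomorphism noted at the opening of \S\ref{fredthms}: for the scalar Laplacian on a QALE space (which has a single end), the trivial choice $h\equiv1$, i.e.\ $a=0$ and $b=0$, gives $V\equiv0$; the hypothesis $\sR\geq V\!\cdot\mathrm{Id}$ then holds as an equality, and $G_\Delta$ is a bounded two-sided inverse on the whole range $2-n<\delta<0$, $\utwo-\nu\leq\tau\leq0$. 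No separate kernel or cokernel argument is needed, and Joyce's extra hypothesis $\delta<\tau_1$ plays no role.

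Your route is more circuitous, and the detour creates genuine problems. First, the search for nontrivial $(a,b_1)$ is unnecessary: the trivial choice already yields an exact inverse, not merely Fredholmness, so your entire third step is superfluous. Second, your formula for $V$ is computed on an exact Euclidean product, but Lemma~\ref{lem:m} only gives a quasi-isometry; the Laplacian is not a quasi-isometry invariant, so the sign of $V$ cannot be read off from the model without further work (and the formula must also be checked on the other pieces $Z^{(k)}_{(j)}$, $j<k$). Third, your plan to upgrade Fredholmness to isomorphism via the barrier $\sigma^{\tau_1}$ and the maximum principle is essentially a reconstruction of Joyce's original argument; you have not actually built the superharmonic barrier --- you correctly flag this as the main obstacle but do not resolve it. In short, the shortest correct path within the paper's framework is to take $a=b=0$ and cite the opening of \S\ref{fredthms}.
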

He goes on to make Conjecture 9.5.16, that this map is an isomorphism on the larger set of weight parameters
\begin{equation}\label{eq:Joyce-conj}
\delta < \tau_1, \quad 2-m_0 < \tau_1 < 0, \quad 2-n < \delta < 0. 
\end{equation}
This is implied by our Theorem~\ref{Sobthm}.
\begin{theorem}\label{thm:unweighted}
 Let $(Z,g)$ be a QAC space of depth $k$. Assume that $(Z,g_Z)$ has only one end. Let $\sL= \nabla^*\nabla + \calR$ 
be a generalized  Laplacian acting on the sections of a  bundle $E$ over $Z$, with $\sR\geq 0$.
If 
\[
2-n < \delta < 0  \quad \text{and} \quad  2- \nu \leq \tau \leq 0, 
\]
then
\[
\sL\colon\rho^{\delta + \frac{n}{2}} w^{\tau + \frac{\nu}{2}} H^2(Z, E;dV_g) \longrightarrow
\rho^{\delta + \frac{n}{2}-2} w^{\tau + \frac{\nu}{2}-\utwo} L^2(Z,E; dV_g) 
\]
is an isomorphism.
\end{theorem}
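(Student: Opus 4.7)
The plan is to recognize Theorem~\ref{thm:unweighted} as the specialization $a = 0$, $b = 0$ of the machinery developed in Sections~3--7, in which the Green function $G_\sL$ serves as an exact inverse rather than merely a parametrix.

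With $a = 0$ and $b = 0$, set $h \equiv 1$, so that $V = -\Delta h / h \equiv 0$ and the hypothesis $\sR \geq 0$ coincides with the standing hypothesis $\sR \geq V\!\cdot\mathrm{Id}$ of Theorem~\ref{thm:heatL}. The weight ranges in Theorem~\ref{thm:Fredholm} then specialize to $2-n < \delta < 0$ and $\utwo - \nu \leq \tau \leq 0$, using the standing assumptions $n > 2$ and $m_{j-1} \geq 2$ for all $j$. Because $Z$ has only one end, the (RCA) condition holds globally, and Theorem~\ref{thm:main4} supplies $(VD)_\mu$ and $(PI)_\mu$ for the unweighted measure $dV_g$; combining Theorem~\ref{thm:Gmuest} with Theorem~\ref{thm:heatL} then yields the sharp domination
\[
|G_\sL(z,z')| \leq G_\Delta(z,z') \asymp d(z,z')^{2-n},
\]
as recorded in~\eqref{eq:scG}.

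Next, I would apply Lemma~\ref{lem:forSchur} with $\alpha = \delta - 2$ and $\beta = \tau - \utwo$; the inequalities \eqref{eq:alpha}--\eqref{eq:beta-2} coincide with the weight constraints of Theorem~\ref{thm:unweighted}. This establishes the boundedness of
\[
G_\sL\colon \rho^{\delta + n/2 - 2}w^{\tau+\nu/2-\utwo}L^2(Z,E) \longrightarrow \rho^{\delta + n/2}w^{\tau+\nu/2}L^2(Z,E),
\]
which upgrades to a bounded map into the weighted $H^2$ target via interior Schauder estimates together with Lemma~\ref{lem:weight-const}.

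To conclude that $\sL$ is an isomorphism (and not only Fredholm, as already follows from Theorem~\ref{Sobthm}), I would verify that $G_\sL$ is a genuine two-sided inverse. The Laplace-transform identity $\sL G_\sL = G_\sL \sL = \mathrm{Id} - P_0$ holds on $C_c^\infty(Z,E)$, where $P_0$ denotes the $L^2$-orthogonal projection onto $\ker_{L^2}\sL$. Given any $u \in \ker_{L^2}\sL$, elliptic regularity places $u$ in $H^2_{\mathrm{loc}}$, and a standard cutoff integration by parts with $\chi_R \equiv 1$ on $\{\rho \leq R\}$ and $|\nabla \chi_R| \preceq R^{-1}$ yields
\[
\int_Z \chi_R^2\bigl(|\nabla u|^2 + \langle \sR u, u\rangle\bigr)\,dV_g \leq C R^{-2}\int_{R \leq \rho \leq 2R} |u|^2\,dV_g \longrightarrow 0
\]
as $R \to \infty$, since $u \in L^2$. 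The hypothesis $\sR \geq 0$ then forces $\nabla u = 0$ and $\sR u = 0$; because $Z$ is connected and has infinite volume, any parallel section of finite $L^2$ norm must vanish, so $P_0 = 0$. Extending $\sL G_\sL = G_\sL \sL = \mathrm{Id}$ from $C_c^\infty$ to the weighted Sobolev spaces by density then yields the isomorphism.

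The principal obstacle is conceptual rather than technical: all analytic ingredients have been built in Sections~3--7, and the single-end hypothesis is precisely what permits the replacement of the parametrix construction in Theorem~\ref{Sobthm} by the exact Green function, while the $L^2$ vanishing argument for $\ker_{L^2}\sL$ is what closes the Laplace-transform identity and delivers true invertibility.
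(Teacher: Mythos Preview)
Your proposal is correct and follows essentially the same route as the paper. The paper states (at the opening of \S\ref{fredthms}) that under the structural hypotheses---one end, $\sR \geq V\!\cdot\mathrm{Id}$---the Green function $G_\sL$ is a bounded two-sided inverse to \eqref{mF}, and Theorem~\ref{thm:unweighted} is precisely the specialization $a=b=0$, $h\equiv 1$, $V\equiv 0$. The paper is terse on why $G_\sL$ is an \emph{exact} inverse rather than merely a parametrix; your $P_0=0$ argument (integration by parts with cutoffs, $\nabla u=0$ forces $|u|$ constant, infinite volume forces $u=0$) supplies the missing justification and is the right way to close the Laplace-transform identity.
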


Joyce's method relies on the existence of a barrier, i.e., a smooth positive function $F$ satisfying 
\[
F \asymp \rho^{\delta} w_1^{\tau_1}, \qquad  \Delta F \geq \rho^{\delta -2} w_1^{\tau_1-2}.
\]
He constructs such functions when $\delta$ and $\tau$ lie in the range~\eqref{eq:Joyce-range}. 
One can construct solutions on a sequence of compact domains which exhaust $Z$ with a fixed
right hand side, and then use the maximum principle with this barrier function to control the sequence
of solutions. 

\subsection{The work of Carron} 
We now describe a paper by Carron~\cite{Carron2011} which use results about elliptic operators on 
QALE manifolds to reach an important geometric conclusion.  This is included as counterpoint
and motivation for this type of linear theory.

As in Joyce's work, suppose that $(Z,g)$ is a QALE manifold asymptotic to the quotient $\RR^n/G$. He 
employs the same two weight functions as Joyce:  $\rho$, which is the distance to a fixed point $o \in Z$, and
$\sigma$, a smoothed version of the distance function to the singular set.  

Carron considers the same equation $\Delta u = f$ in the special case where $g$ has nonnegative Ricci tensor. 
He then invokes an estimate for the Green function due to Li and Yau, giving the familiar bound
$G(z.z') \preceq d(z,z')^{2-n}$ to conclude that if $|f| \preceq \rho^{-\epsilon}\sigma^{-2}$ for some $\e >  0$, then 
$|u| \preceq \log(\rho+2) \rho^{-\e}$.  The point here is that since $\sigma$ contains a factor of $\rho$ (see
the description of Joyce's weight functions above), the overall decay rate is like $\rho^{-2-\epsilon}$, so this
fits in with (but is slightly weaker than) what we attain here -- but it is sufficient for his purposes.

He uses this as follows. There are two canonical QALE Ricci-flat K\"ahler metrics on the Hilbert scheme 
$\text{Hilb}_0^n(\CC^2)$ of $n$ points in $\CC^2$. The first is the hyperk\"ahler metric constructed by Nakajima 
by hyperk\"ahler reduction; the second is Joyce's Ricci-flat QALE metric.  Carron proves that these metrics coincide
using a now-standard method of Yau to show that two Ricci-flat metrics in the same K\"ahler class coincide. 
This relies on finding a potential function for a suitable $f$, i.e. a function $\phi$ satisfying
$\partial \overline{\partial} \phi = f$, and this can be obtained in a standard way once one has solved
the Poisson equation. 

\subsection{Spaces of solutions with a polynomial bound} 
We have proved in Theorem~\ref{thm:main4} that any QAC space $(Z,g) \in \sQ_k$ satisfies 
the two key properties (VD) and (PI).  We point out here that these two properties are all
that are needed in the arguments in~\cite[Theorem 1.2]{Colding1998} and~\cite[Theorem 1]{Li1997} 
to prove the following
\begin{corollary}
Let $(Z,g)$ be a QAC space and $E$ a Hermitian vector bundle over $Z$ of rank $m$ with nonnegative 
curvature. Then for all $d\geq 1$, the space $\sH_d(M,E)$ of harmonic sections of $E$ (with respect to
the connection Laplacian) growing no faster than distance to the power $d$ is finite dimensional: 
\[
\dim \sH_d(M,E) \leq C\, m\, d^{\log_2 C_D-\epsilon},
\]
where $C_D$ is the volume-doubling constant for $(Z,g)$, and $C$ and $\epsilon$ depend only on $C_D$.
\end{corollary}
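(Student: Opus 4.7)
The plan is to deduce the Corollary directly from the scalar dimension estimates of Colding--Minicozzi \cite{Colding1998} and Li \cite{Li1997}, using Theorem~\ref{thm:main4} to supply the sole geometric input those theorems require. Colding--Minicozzi and Li show that on any complete Riemannian manifold satisfying unweighted volume-doubling $(VD)$ and a scale-invariant Poincar\'e inequality $(PI)$, the space of polynomially growing scalar harmonic functions of order $d$ has dimension at most $Cd^{\log_2 C_D - \epsilon}$, where $C$ and $\epsilon$ depend only on $C_D$. Specializing Theorem~\ref{thm:main4} to trivial weights $a=0$, $b=0$ shows that every QAC space $(Z,g)$ satisfies exactly these two properties with respect to the Riemannian measure $dV_g$, so the scalar conclusion is immediate and the work is in the passage from scalar functions to sections of $E$.

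For the bundle case I would invoke the standard Bochner--Kato reduction. For $\sigma \in \sH_d(Z,E)$, the harmonicity $\nabla^*\nabla \sigma = 0$ combined with the Bochner identity yields
\[
\tfrac{1}{2}\Delta |\sigma|^2 = -|\nabla \sigma|^2 \leq 0,
\]
so $|\sigma|^2$ is subharmonic, and by Kato's inequality so is $|\sigma|$. The hypothesis that $E$ has nonnegative curvature enters when one promotes this differential inequality, together with $(VD)+(PI)$, to the uniform pointwise mean value bound
\[
|\sigma|^2(p) \leq \frac{C}{\vol B(p,r)} \int_{B(p,r)} |\sigma|^2 \, dV_g
\]
via Moser iteration; indeed, nonnegativity of the bundle curvature is precisely what ensures that the Moser argument, valid for nonnegative scalar subsolutions on any space with $(VD)+(PI)$, carries over to sections without acquiring a spurious zeroth-order error. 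This mean value inequality for $|\sigma|^2$ is the only analytic ingredient used by the Colding--Minicozzi dimension count.

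With these pieces in place, the dimension counting argument of \cite{Colding1998, Li1997} applies essentially verbatim. Briefly, given any finite-dimensional subspace $K \subset \sH_d(Z,E)$ one equips it with a weighted $L^2$ inner product on an anchored ball $B(o,R)$; the mean value inequality together with $(VD)$ bounds the trace of the identity on $K$, and optimizing $R$ against the polynomial growth rate $d$ produces the bound $\dim K \leq Cd^{\log_2 C_D - \epsilon}$. The extra factor of $m$ in the final bound arises from trivializing $E$ near a base point and applying the scalar argument componentwise, equivalently from the fact that the evaluation map $\sH_d(Z,E) \to E_p$ has an $m$-dimensional target. The main obstacle is largely bookkeeping: one must verify that each step of the Colding--Minicozzi--Li argument, formulated for scalar harmonic functions, transfers faithfully to subharmonic functions of polynomial growth arising as pointwise norms of harmonic bundle sections. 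Once the Bochner--Kato step and Moser mean value inequality are secured, no further geometric input beyond Theorem~\ref{thm:main4} is needed.
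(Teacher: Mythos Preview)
Your proposal is correct and follows exactly the route the paper takes: the paper's entire argument for this Corollary is the sentence preceding it, namely that Theorem~\ref{thm:main4} supplies $(VD)$ and $(PI)$ for the Riemannian measure, after which the dimension bounds of \cite{Colding1998} and \cite{Li1997} apply verbatim. You have simply unpacked that citation with the standard Bochner--Kato/Moser/mean-value chain, which is precisely what those references do.

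One small remark on your exposition: for the pure connection Laplacian $\nabla^*\nabla$ the identity $\tfrac{1}{2}\Delta|\sigma|^2 = -|\nabla\sigma|^2$ already gives subharmonicity of $|\sigma|^2$ without any curvature hypothesis, and Moser iteration on a $(VD)+(PI)$ space then yields the mean value inequality for that scalar subsolution directly. The nonnegative-curvature assumption on $E$ is the hypothesis under which Li's bundle-valued theorem is stated (it is what guarantees the mean value property in his more general setup $\nabla^*\nabla + \calR$ with $\calR \geq 0$), so it is natural to carry it here, but it is not the place where the Moser step would otherwise fail.
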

In particular, the nullspace of the connection Laplacian
\[
\nabla^*\nabla\colon \rho^{\delta+\frac{n}{2}}w^{\tau+\frac{\nu}{2}}H^2(M;E) \to
\rho^{\delta+\frac{n}{2}-2}w^{\tau+\frac{\nu}{2}-\utwo}L^2(M;E) 
\]
is finite dimensional for any $\delta \in \RR$, $\tau \in \RR^k$.

Note in particular that this result makes no restrictions on $d$, so this goes well beyond the possible
weight parameters allowed in our theorem.  Of course, the theorems of Colding-Minicozzi and Li do
not prove that the connection Laplacian on $E$ is Fredholm on this weighted space. 

\clearpage

\begin{table}
\begin{center}
\resizebox{\textwidth}{!} {
\begin{tabular}{r|l}
  {{\bf Symbol}} & {{\bf Denotes}}\\
 \hline
  $f_1\preceq f_2$  & $f_1$ and $f_2$ functions\\  & and $\exists$ $C >0$ so that   $f_1(x) \leq C f_2(x)$ for all $x$. \\
  $f_1 \asymp f_2$ &   $f_1$ and $f_2$ functions\\ & and  $\exists$ $c, C
  >0$ so that  $c f_2(x) \leq f_1(x) \leq C f_2(x)$ for all $x$.\\
 \hline
  $g_1 \asymp g_2$ & $g_1$ and $g_2$ Riemannian metrics which
  are quasi-isometric.\\
 \hline
   $(Z,g)$ or $(Z^{\d{k}},g^{\d{k}})$ & a QAC space of depth $k$\\
   $\rho$ or $\rho_k$ & radial function on a QAC space of depth $k$\\
   $w$ or $w^{\d{k}}$ & $w = (w_k, \ldots, w_1)$ or $w^{\d{k}} =
   (w^{\d{k}}_k, \ldots, w^{\d{k}}_1)$  defining functions\\ & on a QAC
   space of depth $k$\\
\hline
  $Z^{\d{k}}_{\d{j}}$ & all the points $p \in Z^{\d{k}}$ s.t.\\ &
                    $w^{\d{k}}_k(p) = \ldots = w^{\d{k}}_{j+1}(p) =1$ and $w^{\d{k}}_j(p) < 1$;\\
   $Z^{\d{k}}_{\d{j}}(\eta)$ & all the points $p \in Z^{\d{k}}$ s.t.\\ &
                     $w^{\d{k}}_k(p), \ldots , w^{\d{k}}_{j+1}(p)
                    \geq  1-\eta$ and $w^{\d{k}}_j(p) < 1$;\\
\hline
   $b$ & $(b_{1}, \ldots, b_k)\in \RR^k$\\
   $\frac{b}{2}$ & $(\frac{b_1}{2},\ldots, \frac{b_k}{2})\in \RR^k$\\
   $w^{b}$ or $(w^{\d{k}})^b$ & $w_1^{b_1}\cdot\ldots\cdot w_k^{b_k}$ or $(w^{\d{k}}_1)^{b_1}\cdot\ldots\cdot (w^{\d{k}}_k)^{b_k}$\\
   $\nu$ & $(\nu_1, \ldots, \nu_k)$ with $\nu_1 = m_0$, and $\nu_j =   m_{j-1}-m_{j-2}$ for $2\leq j \leq k$\\
         & where $m_{j-1} = \dim Z^{\d{j-1}}$ and $Z^{\d{j-1}}$ the  fiber over $C(S_j)$ in $Z^{\d{k}}_{\d{j}}$\\
   $\underline{r}$ & $(r, 0,  \ldots, 0, 0)\in \RR^k$\\
 $b + \underline{r}$ & $(b_1+r,b_2 \ldots,  b_k)\in \RR^k$\\
 $b \leq \beta$ & $b_j \leq \beta_j$ for all $1\leq j \leq k$\\
   $b(\ell)$ & $(b_1, \ldots, b_\ell) \in \RR^{\ell} \subset \RR^k$
   for all $1\leq \ell \leq k$; $b(0) =0$\\
  $|b(\ell)|$ & $b_1+ \ldots +b_\ell$ for all $1\leq \ell \leq k$;
  $|b(0)| = 0$\\
 \hline
 $d \mu_{a,b}$ & the measure $\rho^a w^b dV_{g}$ on the QAC
 space $(Z,g)$ of depth $k$;\\ &   here $a \in \RR$ and $b \in \RR^k$.\\
 \hline
  $B(p,r)$ & geodesic ball in a QAC space centered at $p$ with radius $r$.\\
 anchored ball& a ball of the form $B(o, R)$ with $R>1$\\
  remote ball & a ball of the form $B(p,r)$ with $r \leq c\rho(p)$\\
  $\sA_k(R; a,b)$ &  $\mu_{a,b} (B(o,R))$\\
 $\sR_k(p,r;a,b)$ & $\mu_{a,b}(B(p,r))$ with $B(p,r)$ a remote ball\\
 $\sV_k(p,r;a,b)$ & $\mu_{a,b}(B(p,r))$ for any $B(p,r)$ 
\end{tabular}}
\label{table}
\caption{Frequently used notation}
\end{center}
\end{table}

\clearpage

\bibliography{ad-refs}

\end{document}